\def\calcLength(#1,#2)#3{%
\pgfpointdiff{\pgfpointanchor{#1}{center}}%
             {\pgfpointanchor{#2}{center}}%
\pgf@xa=\pgf@x%
\pgf@ya=\pgf@y%
\FPeval\@temp@a{\pgfmath@tonumber{\pgf@xa}}%
\FPeval\@temp@b{\pgfmath@tonumber{\pgf@ya}}%
\FPeval\@temp@sum{(\@temp@a*\@temp@a+\@temp@b*\@temp@b)}%
\FProot{\FPMathLen}{\@temp@sum}{2}%
\FPround\FPMathLen\FPMathLen5\relax
\global\expandafter\edef\csname #3\endcsname{\FPMathLen}
}
\newcommand{\Z}{\mathbb{Z}}
\newcommand{\R}{\mathbb{R}}
\newcommand{\C}{\mathcal{C}} 
\newcommand{\G}{\Gamma}
\newcommand{\g}{\gamma}
\renewcommand{\C}{\mathcal{C}}
\newcommand{\Uc}{\mathcal{U}}
\renewcommand{\O}{\Omega}
\renewcommand{\S}{\mathbb{S}}
\newcommand{\ie}{i.e.\ }
\definecolor{light-gray}{gray}{0.80}
\definecolor{dark-gray}{gray}{0.30}
\newcommand{\Lshaded}{light-gray}
\newcommand{\Dshaded}{dark-gray}
\renewcommand{\geq}{\geqslant}
\newcommand{\light}{-}
\theoremstyle{plain}
\newtheorem{theorem}{Theorem}[section]
\newtheorem{proposition}[theorem]{Proposition}
\newtheorem{corollary}[theorem]{Corollary}
\newtheorem{lemma}[theorem]{Lemma}
\newtheorem{theointro}{Theorem}[section]
\newtheorem{quesintro}{Question}[section]
\theoremstyle{definition}
\newtheorem{example}[theorem]{Example}
\newtheorem{definition}[theorem]{Definition}
\theoremstyle{remark}
\newtheorem{remark}[theorem]{Remark}
\newtheorem{remarkintro}{Remark}
\subjclass[2010]{20F55, 20F65, 20H10, 22E40, 51F15, 53C50, 57M50, 57S30}
\title[AdS strictly GHC-regular groups which are not lattices]{Anti-de Sitter strictly GHC-regular groups \\ which are not lattices}
\author{
Gye-Seon Lee
}
\address{Mathematisches Institut, Ruprecht-Karls-Universit\"{a}t Heidelberg, Germany}
\email{lee@mathi.uni-heidelberg.de}
\author{
Ludovic Marquis
}
\address{Univ Rennes, CNRS, IRMAR - UMR 6625, F-35000 Rennes, France}
\email{ludovic.marquis@univ-rennes1.fr}
\begin{document}


\let\oldmarginpar\marginpar
\renewcommand\marginpar[1]{\-\oldmarginpar[\raggedleft\tiny #1]%
{\raggedright\tiny #1}}

\newcommand\crule[3][black]{\textcolor{#1}{\rule{#2}{#3}}}
\newcommand\mybox[1][black]{\textcolor{#1}{\rule{2cm}{0.5cm}}}


\begin{abstract}
For $d=4, 5, 6, 7, 8$, we exhibit examples of $\mathrm{AdS}^{d,1}$ strictly GHC-regular groups which are not quasi-isometric to the hyperbolic space $\mathbb{H}^d$, nor to any symmetric space. This provides a negative answer to Question 5.2 in \cite{survey_all} and disproves Conjecture 8.11 of Barbot--Mérigot \cite{barbot_merigot}.

 We construct those examples using the Tits representation of well-chosen Coxeter groups. On the way, we give an alternative proof of Moussong's hyperbolicity criterion \cite{moussong} for Coxeter groups built on Danciger--Guéritaud--Kassel \cite{DGK} and find examples of Coxeter groups $W$ such that the space of strictly GHC-regular representations of $W$ into $\mathrm{PO}_{d,2}(\mathbb{R})$ up to conjugation is disconnected.
\end{abstract}

\keywords{Anti-de Sitter spaces, Anosov representations, Quasi-Fuchsian groups, Coxeter groups, Discrete subgroups of Lie groups}

\maketitle

\tableofcontents

\section{Introduction}

Let $\R^{d,e}$ be the vector space $\R^{d+e}$ endowed with a non-degenerate symmetric bilinear form $\langle \cdot, \cdot \rangle_{d,e}$ of signature $(d,e)$, and let $q$ be the associated quadratic form. A coordinate representation of $q$ with respect to some basis of $\R^{d,e}$ is:
$$
q(x) = x_1^2 + \cdots + x_d^2 - x_{d+1}^2 - \cdots - x_{d+e}^2
$$
The $(d+e-1)$-dimensional \emph{pseudohyperbolic space $\mathbb{H}^{d,e-1}$} is the quotient of the hyperquadric  
$$
\mathcal{Q} = \{ x \in \R^{d,e} \mid q(x) = -1 \}
$$
by the involution $x \mapsto -x$ and is endowed with the semi-Riemannian metric induced by the quadratic form $q$.  It is a complete semi-Riemannian manifold of constant sectional curvature $-1$. The isometry group of $\mathbb{H}^{d,e-1}$ is $\mathrm{PO}_{d,e}(\R)$, and so $\mathbb{H}^{d,e-1}$ may be identified with $\mathrm{PO}_{d,e}(\R)/\mathrm{O}_{d,e-1}(\R)$. It will be more convenient for us to work with the projective model of $\mathbb{H}^{d,e-1}$, namely:
$$
\mathbb{H}^{d,e-1} = \{ [x] \in \mathbb{P}(\mathbb{R}^{d,e})  \mid q(x) < 0 \}
$$ 
In particular, for $e=2$, the pseudohyperbolic space $\mathbb{H}^{d,e-1}$ is called the $(d+1)$-dimensional \emph{anti-de Sitter space $\mathrm{AdS}^{d,1}$}.

\medskip

Let $\Gamma$ be a finitely presented group. A representation $\rho:\G \to \mathrm{PO}_{d,2}(\R)$ is \emph{strictly GHC-regular}
 if it is discrete, faithful and preserves an \emph{acausal} (i.e. \emph{negative}) subset\footnote{We denote by $\partial \mathbb{H}^{d,e-1}$ the boundary of $\mathbb{H}^{d,e-1}$ in $\mathbb{P}(\mathbb{R}^{d,e})$. In the case $e=2$, the boundary $\partial \mathbb{H}^{d,e-1} = \partial \mathrm{AdS}^{d,1}$ is called the \emph{Einstein universe}, denoted $\mathrm{Ein}^d$. A subset $\Lambda$ of $\partial \mathbb{H}^{d,e-1}$ is \emph{negative} if it lifts to a cone of $\mathbb{R}^{d,e} \setminus \{0\}$ on which all inner products $\langle \cdot, \cdot \rangle_{d,e}$ of noncollinear points are negative. By a \emph{cone} we mean a subset of $\mathbb{R}^{d,e}$ which is invariant under multiplication by positive scalars.} $\Lambda_\rho$ in the boundary $\mathrm{Ein}^{d}$ of $\mathrm{AdS}^{d,1}$ such that $\Lambda_\rho$ is homeomorphic to a $(d-1)$-dimensional sphere and the action of $\G$ on the convex hull\footnote{An element of $\mathrm{PGL}(\mathbb{R}^{d,e})$ is \emph{proximal} if it admits a unique attracting fixed point in $\mathbb{P}(\mathbb{R}^{d,e})$. The (proximal) \emph{limit set} of a discrete subgroup $\Gamma_0$ of $\mathrm{PGL}(\mathbb{R}^{d,e})$ is the closure of the set of attracting fixed points of elements of $\Gamma_0$ which are proximal. In fact, the $\rho(\Gamma)$-invariant subset $\Lambda_\rho$ must be the limit set of $\rho(\G)$, and the convex hull of $\Lambda_\rho$ is well-defined because $\Lambda_\rho$ is acausal.} of $\Lambda_\rho$ in $\mathrm{AdS}^{d,1}$ is cocompact. An $\mathrm{AdS}^{d,1}$ \emph{strictly GHC-regular group} (simply \emph{strictly GHC-regular group}) is a subgroup of $\mathrm{PO}_{d,2}(\R)$ which is the image of a strictly GHC-regular representation.

\medskip

The first examples of strictly GHC-regular representations are \emph{Fuchsian representations}. Namely, consider any uniform lattice $\G$ of $\mathrm{PO}_{d,1}(\R)$, which is isomorphic to $\mathrm{O}_{d,1}^+(\R)$, and let $\rho_0 : \Gamma \to \mathrm{PO}_{d,2}(\R)$ be the restriction to $\Gamma$ of the natural inclusion $\mathrm{PO}_{d,1}(\R) \simeq \mathrm{O}_{d,1}^+(\R) \hookrightarrow \mathrm{PO}_{d,2}(\R)$ (hence $\mathrm{O}_{d,1}(\R)$ corresponds to the stabilizer of a point $p_0 \in \mathrm{AdS}^{d,1}$ in $\mathrm{PO}_{d,2}(\R)$). The representation $\rho_0$ is strictly GHC-regular and the convex hull of $\Lambda_{\rho_0}$ is a copy of the hyperbolic $d$-space $\mathbb{H}^d$ inside $\mathrm{AdS}^{d,1}$. Moreover, any small deformation $\rho_t : \Gamma \to \mathrm{PO}_{d,2}(\R)$ of the representation $\rho_0$ is strictly GHC-regular (see Barbot--Mérigot \cite{barbot_merigot}). Even more, Barbot \cite{barbot_close} proved that every representation  $\rho : \G \to  \mathrm{PO}_{d,2}(\R)$ in the connected component of $\mathrm{Hom} ( \Gamma, \mathrm{PO}_{d,2}(\R))$ containing $\rho_0$ is strictly GHC-regular.

\medskip

We will not use the following point of view, but we remark that strictly GHC-regular representations of torsion-free groups are exactly the holonomies of GHMC $\mathrm{AdS}$ manifolds (see Mess \cite{mess} and Barbot \cite{barbot_quasiFusch=GHMC}). We quickly recall this terminology: an $\mathrm{AdS}$ manifold $M$ is \emph{\underline{g}lobally \underline{h}yperbolic spatially \underline{c}ompact} (GHC) if it contains a compact \emph{Cauchy hypersurface}, i.e. a space-like hypersurface $S$ such that all inextendible time-like lines intersect $S$ exactly once. It is \emph{globally hyperbolic maximal compact} (GHMC) if in addition every isometric embedding of $M$ into another GHC $\mathrm{AdS}$ manifold of the same dimension is an isometry.

\medskip

Recently, Barbot--Mérigot \cite{barbot_merigot} found an alternative description of $\mathrm{AdS}^{d,1}$ strictly GHC-regular groups. In the case when $\G$ is isomorphic to the fundamental group of a closed, negatively curved Riemannian $d$-manifold, a representation $\rho:\G \to \mathrm{PO}_{d,2}(\R)$ is strictly GHC-regular if and only if it is $P_1^{d,2}$-Anosov, where $P_1^{d,2}$ is the stabilizer of an isotropic line $l_0 \in \R^{d,2}$ in $\mathrm{PO}_{d,2}(\mathbb{R})$ (see Labourie \cite{labourie_anosov} or Guichard--Wienhard \cite{MR2981818} for more background on Anosov representations) and the limit set of $\rho(\G)$ is a topological $(d-1)$-sphere in the boundary $\mathrm{Ein}^{d}$ of $\mathrm{AdS}^{d,1}$.

\medskip

Very recently, Danciger--Guéritaud--Kassel \cite{DGK} introduced a notion of \emph{convex cocompactness in}  $\mathbb{H}^{d,e-1}$. A discrete subgroup $\G$ of $\mathrm{PO}_{d,e}(\R)$ is \emph{$\mathbb{H}^{d,e-1}$-convex cocompact} if it acts properly discontinuously and cocompactly on some properly convex\footnote{A subset $\mathcal{C}$ of $\mathbb{P}(\mathbb{R}^{d,e})$ is \emph{properly convex} if its closure $\overline{\mathcal{C}}$ is contained and convex in some affine chart.} closed subset $\mathcal{C}$ of $\mathbb{H}^{d,e-1}$ with non-empty interior\footnote{In the case $\Gamma$ is irreducible, any non-empty $\Gamma$-invariant convex subset of $\mathbb{P}(\mathbb{R}^{d,e})$ has non-empty interior, and so “$\mathcal{C}$ with non-empty interior” may be replaced by “$\mathcal{C}$ non-empty”.} so that $\overline{\mathcal{C}} \setminus \mathcal{C}$ does not contain any non-trivial projective segment. A main result of \cite{DGK} is as follows: an irreducible discrete subgroup $\Gamma$ of $\mathrm{PO}_{d,e}(\R)$ is $\mathbb{H}^{d,e-1}$-convex cocompact if and only if $\Gamma$ is Gromov-hyperbolic, the natural inclusion $\Gamma  \hookrightarrow  \mathrm{PO}_{d,e}(\R)$ is $P_1^{d,e}$-Anosov, and the limit set $\Lambda_\Gamma$ is negative. 

\medskip

In particular, for any discrete faithful representation $\rho:\G \to \mathrm{PO}_{d,2}(\R)$, we have that $\rho$ is strictly GHC-regular if and only if $\rho(\Gamma)$ is $\mathrm{AdS}^{d,1}$-convex cocompact (which implies that $\G$ is Gromov-hyperbolic) and the Gromov boundary of $\G$ is homeomorphic to a $(d-1)$-dimensional sphere.

\medskip

In \cite{survey_all}, the authors asked the following:

\begin{quesintro}[Question 5.2 of \cite{survey_all}]\label{que:main}
Assume that $\rho: \G \to \mathrm{PO}_{d,2}(\R)$ is a strictly GHC-regular representation. Is $\G$ isomorphic to a uniform lattice of $\mathrm{PO}_{d,1}(\R)$?
\end{quesintro}

The positive answer to this question was conjectured by Barbot and Mérigot (see Conjecture 8.11 of \cite{barbot_merigot}). In this paper, we give a negative answer to Question \ref{que:main}.

\begin{theointro}\label{thm:main}
For $d=4,5,6,7,8$, there exists an $\mathrm{AdS}^{d,1}$ strictly GHC-regular group which is not isomorphic (even not quasi-isometric) to a uniform lattice in a semi-simple Lie group.
\end{theointro}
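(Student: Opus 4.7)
The idea is to produce, for each $d \in \{4,5,6,7,8\}$, a Coxeter system $(W,S)$ with $|S|=d+2$ whose Tits bilinear form $B$ on $\R^S$ has signature $(d,2)$, so that the Tits representation gives a discrete faithful embedding $\rho : W \hookrightarrow \mathrm{PO}_{d,2}(\R)$. The whole theorem then reduces to arranging the Coxeter diagram so that $\rho$ is strictly GHC-regular and $W$ is not quasi-isometric to any uniform lattice in a semi-simple Lie group.

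To see that $\rho$ is strictly GHC-regular, I would invoke the Danciger--Gu\'eritaud--Kassel characterisation and the corollary recalled in the introduction. Concretely, I would arrange the Coxeter exponents so that: (i) $(W,S)$ satisfies Moussong's criterion, making $W$ Gromov-hyperbolic; (ii) the nerve of $(W,S)$ is a triangulation of $S^{d-1}$ of a type for which the Davis complex is a topological $d$-manifold, so that by Davis's theory of boundaries of Coxeter groups the Gromov boundary $\partial W$ is homeomorphic to $S^{d-1}$; and (iii) the proximal limit set of $\rho(W)$ in $\partial\mathrm{AdS}^{d,1} = \mathrm{Ein}^d$ is acausal. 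Point (iii) should follow from the specific form $B$ together with geometric properties of the Tits cone in $\R^{d,2}$, which control the position of the limit set in $\mathbb{P}(\R^{d,2})$; the Coxeter fundamental chamber intersected with $\mathrm{AdS}^{d,1}$ will provide the properly convex domain witnessing $\mathrm{AdS}^{d,1}$-convex cocompactness in the sense of \cite{DGK}. Together with Gromov hyperbolicity, \cite{DGK} then yields the $P_1^{d,2}$-Anosov property of $\rho$, and hence strict GHC-regularity via the corollary recalled above.

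To rule out quasi-isometry to any uniform lattice in a semi-simple Lie group $G$, I would argue as follows. Gromov hyperbolicity of $W$ immediately excludes all $G$ whose symmetric space is not negatively curved, in particular every higher-rank case. Among the remaining rank-one possibilities, the boundaries of uniform lattices in $\mathrm{Sp}(n,1)$ and $F_4^{-20}$ carry a rigid Carnot--Carath\'eodory structure which is a quasi-isometry invariant by Pansu's theorem; the same applies to complex hyperbolic lattices when $d$ is even. The combinatorially defined boundary $\partial W \simeq S^{d-1}$ can be arranged not to admit such a structure, ruling these out. The remaining and most delicate case, $G = \mathrm{PO}_{d,1}(\R)$, is excluded by choosing the nerve of $(W,S)$ to carry local combinatorial data — for instance a vertex link that fails some manifold-PL condition, or a prescribed subgroup structure — preventing $W$ from being virtually isomorphic to the fundamental group of a closed hyperbolic $d$-manifold or orbifold, and hence from being quasi-isometric to one.

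The \emph{main obstacle} is the simultaneous realisation, for each $d \in \{4,\ldots,8\}$, of all four requirements: a Tits form of signature $(d,2)$, Moussong hyperbolicity, nerve of $(W,S)$ producing a boundary homeomorphic to $S^{d-1}$, and incompatibility with hyperbolic $d$-manifold groups. The technical heart of the proof should therefore lie in exhibiting explicit Coxeter diagrams meeting all four constraints in each of these five dimensions, followed by a case-by-case verification; the paper's alternative proof of Moussong's criterion and its analysis of the representation space of $W$ in $\mathrm{PO}_{d,2}(\R)$ are the tools that make such verifications tractable.
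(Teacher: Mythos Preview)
Your overall plan matches the paper's strategy closely: build Coxeter groups of rank $d+2$ with Cosine matrix of signature $(d,2)$, verify Moussong hyperbolicity, check the nerve is a PL $(d-1)$-sphere so that $\partial W \simeq S^{d-1}$, and invoke \cite{DGK} to get $\mathrm{AdS}^{d,1}$-convex cocompactness (the paper's Theorem~\ref{thm:dgk} gives the precise Cartan-matrix conditions $(\mathsf{H}_0)$ and $(\mathsf{H}_-)$ that replace your vague point~(iii)).

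The genuine gap is in your exclusion of $\mathrm{PO}_{d,1}(\R)$. Your proposal is internally inconsistent: to obtain $\partial W \simeq S^{d-1}$ you \emph{need} the nerve to be a PL sphere (this is exactly what makes the Davis complex a manifold), so you cannot simultaneously ``choose a vertex link that fails some manifold-PL condition'' to block a hyperbolic structure. The paper's mechanism is entirely different and much cleaner. Once the Cosine matrix $\mathsf{C}_W$ has signature $(d,2,0)$, it is an invertible $(d{+}2)\times(d{+}2)$ matrix. If $W$ were a uniform lattice in $\mathrm{Isom}(\mathbb{H}^d)$, then Lemma~\ref{lem:must_be_cartan1} (which uses Davis's topological reflection-group theory plus Charney--Davis rigidity of the generating set) forces $W$ to act as a projective reflection group on $\mathbb{H}^d$ with the \emph{same} generators $S$; the resulting Cartan matrix is a Gram matrix of $d{+}2$ vectors in $\R^{d,1}$ and hence has rank $\le d{+}1$, contradicting $\det(\mathsf{C}_W)\neq 0$. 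So the very signature condition you already impose for the $\mathrm{AdS}$ side is what kills the hyperbolic case.

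For the other semi-simple Lie groups, the paper also proceeds differently from your sketch: rather than Pansu-type boundary arguments, it uses a short ``folklore'' reduction (Theorem~\ref{thm_folklore}) showing that an irreducible large Coxeter group can be a uniform lattice only in $\mathrm{Isom}(\mathbb{H}^d)$, via Davis's cohomology computation, strong indecomposability of irreducible Coxeter groups, and Cooper--Long--Reid's virtual indicability (which rules out higher rank by Margulis). This bundles all the non-real-hyperbolic cases into one step.
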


The counterexamples are the Tits representations of well-chosen Coxeter groups. 

\begin{remarkintro}
For $d=2$, there exist no examples as in Theorem \ref{thm:main} by work of Tukia \cite{MR961162}, Gabai \cite{MR1189862} and Casson--Jungreis \cite{MR1296353}: if $\G$ is a Gromov-hyperbolic group whose boundary is homeomorphic to $\mathbb{S}^{1}$, then $\G$ admits a geometric action on $\mathbb{H}^2$, \ie there exists a properly discontinuous, cocompact and isometric action of $\G$ on $\mathbb{H}^{2}$ (for a proof see Chapter 23 of Dru\c{t}u--Kapovich \cite{book_kapo_drutu}). In the case $d=3$, recall that Cannon’s conjecture claims that if $\G$ is a Gromov-hyperbolic group whose boundary is homeomorphic to $\mathbb{S}^2$, then $\G$ admits a geometric action on $\mathbb{H}^3$. If Cannon's conjecture is true, then any $\mathrm{AdS}^{3,1}$ strictly GHC-regular group is isomorphic to a uniform lattice of $\mathrm{PO}_{3,1}(\mathbb{R})$. For Coxeter groups, Cannon's conjecture is in fact known to be true (see Bourdon--Kleiner \cite{MR3019076}).
\end{remarkintro}

An important invariant of a Coxeter group $W$ is \emph{the signature $s_W$ of $W$}. It means the signature of the Cosine matrix of $W$ (see Section \ref{section:Coxetergroups} for the basic terminology of Coxeter groups). Recall that the signature of a symmetric matrix $A$ is the triple $(p,q,r)$ of the positive, negative and zero indices of inertia of $A$.

\medskip

The outline of the proof of Theorem \ref{thm:main} is as follows. First, observe that if the signature of a Coxeter group $W$ is $(d,2,0)$, then the Tits representation $\rho$ of $W$ is a discrete faithful representation of $W$ into $\mathrm{PO}_{d,2}(\R)$.  Second, we prove Theorem \ref{thm:dgk} implying that if in addition $W$ is Gromov-hyperbolic and the Coxeter diagram of $W$ has no edge of label $\infty$, then the representation $\rho : W \to \mathrm{PO}_{d,2}(\R)$ is $\mathrm{AdS}^{d,1}$-convex cocompact. Remark that Theorem \ref{thm:dgk} is a generalization of Theorem 8.2 of \cite{DGK}. Third, in order to prove that $\rho$ is strictly GHC-regular, we only need to check that the Gromov boundary of $W$ is a $(d-1)$-dimensional sphere. Coxeter groups satisfying all these properties exist:

\begin{theointro}\label{thm:specifications}
Every Coxeter group $W$ in Tables \ref{examples_dim4}, \ref{examples_dim5}, \ref{examples_dim6}, \ref{examples_dim7}, \ref{examples_dim8} satisfies the following: 

\begin{itemize}
\item the group $W$ is Gromov-hyperbolic;
\item the Gromov boundary $\partial W$ of $W$ is homeomorphic to a $(d-1)$-sphere;
\item the signature $s_W$ of $W$ is $(d,2,0)$.
\end{itemize}
Here, $d$ denotes the rank of $W$ minus $2$.
\end{theointro}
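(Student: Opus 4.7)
The plan is to verify the three listed properties for every $W$ in Tables~\ref{examples_dim4}--\ref{examples_dim8} separately. The signature and the hyperbolicity are routine combinatorial/linear-algebraic checks, whereas the sphericity of the Gromov boundary is the main point and requires the Davis complex.

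For the signature, I would compute it directly from the explicit data. The Cosine matrix of $W$ is the $(d+2)\times(d+2)$ symmetric matrix with $c_{ii}=1$ and $c_{ij}=-\cos(\pi/m_{ij})$ for $i\neq j$. For each diagram in the tables, one can apply Sylvester's rule on the leading principal minors, or equivalently compute the characteristic polynomial, to certify that the signature equals $(d,2,0)$. This is deterministic, and since the rank is at most $10$, it can be carried out explicitly.

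For Gromov-hyperbolicity, I would invoke Moussong's criterion, the alternative proof of which is provided later in the paper: a Coxeter group is Gromov-hyperbolic if and only if its diagram contains no subdiagram of affine type of rank $\geq 3$ and no pair of disjoint subdiagrams each generating an infinite Coxeter group. Since each diagram in the tables has rank $d+2\leq 10$, enumerating subdiagrams and checking them against the classical lists of spherical/affine Coxeter diagrams is a finite combinatorial verification.

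The main obstacle is the Gromov boundary. The strategy is to show that the nerve $L=L(W,S)$ — the simplicial complex on vertex set $S$ whose simplices are the subsets $T\subseteq S$ with $W_T$ finite — is a PL triangulation of $\mathbb{S}^{d-1}$. Granting this, Davis's theory of the Davis--Moussong complex $\Sigma_W$ yields that $\Sigma_W$ is a contractible topological $d$-manifold on which $W$ acts properly discontinuously and cocompactly by isometries with respect to the Moussong piecewise-hyperbolic metric. Combined with Moussong's criterion, $\Sigma_W$ becomes a $\mathrm{CAT}(-1)$ proper geodesic space on which $W$ acts geometrically, so by Bestvina--Mess the Gromov boundary $\partial W$ is homeomorphic to the visual boundary $\partial_\infty\Sigma_W$; the latter is a $(d-1)$-sphere since $\Sigma_W$ is a contractible $d$-manifold. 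Verifying that $L$ is actually a PL $(d-1)$-sphere for each $W$ in the tables is the step I expect to absorb the bulk of the work: one examines the poset of spherical subsets of $S$, checks that the maximal ones have cardinality $d$ and that every link in $L$ is inductively a sphere of the correct dimension, and confirms simple-connectivity of $L$ when $d-1\geq 2$. Once sphericity of $L$ is established, the Davis--Bestvina--Mess machinery and hyperbolicity close the argument automatically.
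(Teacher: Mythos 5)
Your overall architecture (nerve is a PL $(d-1)$-sphere $\Rightarrow$ the Davis complex is a contractible $d$-manifold whose visual boundary is a sphere $\Rightarrow$ $\partial W\simeq\mathbb{S}^{d-1}$; Moussong's criterion for hyperbolicity; linear algebra for the signature) matches the paper's, but two of your verification steps would fail as described. First, the signature: Tables \ref{examples_dim4}, \ref{examples_dim5}, \ref{examples_dim6} consist of one- and two-parameter \emph{families} $(W_p)_p$, $(W_{p,q})_{p,q}$ with $p,q$ unbounded, so ``apply Sylvester's rule to each diagram; the rank is at most $10$'' is not a finite procedure. You must control the sign of $\det(\mathsf{C}_W)$ as a function of $\cos(\pi/p)$, $\cos(\pi/q)$ over the whole admissible range. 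The paper first observes that, since $W$ contains a rank-$d$ spherical subgroup and a Lann\'er subgroup, the signature is forced to be one of $(d,2,0)$, $(d,1,1)$, $(d+1,1,0)$ and is therefore determined by the sign of $\det(\mathsf{C}_W)$ alone; it then proves (Lemma \ref{lem:computation}, via Vinberg's determinant identities) that this determinant is increasing in the parameters, so positivity need only be checked at the minimal parameter values.

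Second, the nerve. Your proposed certificate --- maximal spherical subsets of cardinality $d$, all links inductively spheres, and simple connectivity --- does not establish that the nerve is a PL $(d-1)$-sphere: those conditions only make it a simply connected closed PL manifold (e.g.\ $\mathbb{S}^2\times\mathbb{S}^2$ satisfies them in dimension $4$), and to reach a sphere you would need at least the homology of a sphere plus the generalized Poincar\'e conjecture, with further care to get a \emph{PL} homeomorphism, which is what Theorem \ref{thm:merci_davis} requires. The paper sidesteps all of this with one structural observation that you should look for: in every table the generators split as $S=S_1\sqcup S_2$ with $W_{S_1}$, $W_{S_2}$ Lann\'er and $T\subset S$ spherical if and only if $T$ contains neither $S_1$ nor $S_2$; hence $N_W$ is the join of $\mathcal{F}(\partial\Delta_1)$ and $\mathcal{F}(\partial\Delta_2)$, a join of boundaries of simplices, which is a PL $(d-1)$-sphere for free (and is visibly independent of the labels $p,q$, disposing of the infinite families at once). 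A minor further point: in Moussong's criterion the forbidden pairs of non-spherical subsets must be \emph{orthogonal}, i.e.\ all labels between them equal $2$, not merely disjoint.
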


Theorem \ref{thm:main} is then a consequence of the following:

\begin{theointro}\label{thm:tits_rep}
Let $W$ be a Coxeter group in Tables \ref{examples_dim4}, \ref{examples_dim5}, \ref{examples_dim6}, \ref{examples_dim7}, \ref{examples_dim8}. Then the Tits representation $\rho: W \to \mathrm{PO}_{d,2}(\R)$ is strictly GHC-regular and the group $W$ is not quasi-isometric to a uniform lattice of a semi-simple Lie group.
\end{theointro}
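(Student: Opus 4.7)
The plan divides into establishing strict GHC-regularity and ruling out quasi-isometry with uniform lattices. For the first, I would combine Theorem \ref{thm:specifications} with Theorem \ref{thm:dgk}. Since the signature of $W$ is $(d,2,0)$, its cosine matrix is non-degenerate of type $(d,2)$, so the Tits representation $\rho$ embeds $W$ discretely and faithfully into $\mathrm{PO}_{d,2}(\mathbb{R})$ by the classical Tits--Vinberg theorem. Because $W$ is Gromov-hyperbolic (Theorem \ref{thm:specifications}) and, by inspection of the tables, its Coxeter diagram has no edge labeled $\infty$, Theorem \ref{thm:dgk} applies and yields $\mathrm{AdS}^{d,1}$-convex cocompactness of $\rho(W)$. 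Combined with the equivalence recalled in the introduction, that strict GHC-regularity for a discrete faithful representation is the same as $\mathrm{AdS}^{d,1}$-convex cocompactness together with $(d-1)$-sphericity of the Gromov boundary, and with $\partial W\simeq \mathbb{S}^{d-1}$ (again Theorem \ref{thm:specifications}), this shows that $\rho$ is strictly GHC-regular.

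For the second part, assume by contradiction that $W$ is quasi-isometric to a uniform lattice $\Lambda$ of a semi-simple Lie group $G$; then $\Lambda$ acts geometrically on the associated symmetric space $X$, and since $W$ is Gromov-hyperbolic so is $X$, hence $X$ is of real rank one. A quasi-isometry induces a quasi-symmetric homeomorphism $\partial W \to \partial X$, and $\partial W \simeq \mathbb{S}^{d-1}$ restricts $X$ to a short list: for $d\in\{5,7\}$ only the real hyperbolic $d$-space survives, while for $d\in\{4,6,8\}$ the complex hyperbolic space of appropriate dimension, and for $d=8$ additionally the quaternionic hyperbolic plane, must be ruled out separately.

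The exotic (complex and quaternionic) possibilities are excluded via Pansu's quasi-isometric rigidity: a quasi-isometry with such a space is at bounded distance from an isometry, so $W$ would be virtually a uniform lattice in the corresponding Lie group. A suitable invariant of $W$ computable from its Coxeter diagram, for instance the $\ell^{2}$-Betti numbers via Davis's formula (which for uniform lattices in these exotic spaces are concentrated in a specific middle degree by Dodziuk--Singer), can be checked on the tables to derive a contradiction. In the remaining real case, Tukia--Gromov--Sullivan quasi-isometric rigidity of $\mathbb{H}^{d}$ (for $d\geq 3$) forces $W$ to be commensurable to a cocompact lattice in $\mathrm{PO}_{d,1}(\mathbb{R})$, and I expect the main obstacle of the proof to lie exactly here: for each specific $W$ in the tables one must exhibit an invariant---for instance a reflection subgroup structure visible in the Coxeter diagram whose own cosine matrix has a signature incompatible with a Lorentzian realization---that prevents $W$ from being commensurable to any cocompact hyperbolic reflection group in dimension $d$.
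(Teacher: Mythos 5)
Your first half (strict GHC-regularity) follows the paper's route exactly: Theorem \ref{thm:specifications} gives hyperbolicity, the sphere boundary and the signature $(d,2,0)$; the absence of $\infty$-labels together with Moussong's criterion gives the hypotheses $(\mathsf{H}_0)$ and $(\mathsf{H}_-)$ of Theorem \ref{thm:dgk}; and the DGK characterization converts $\mathrm{AdS}^{d,1}$-convex cocompactness plus $\partial W \simeq \mathbb{S}^{d-1}$ into strict GHC-regularity. This part is fine.

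The second half has a genuine gap, which you yourself flag: after reducing to the statement that $W$ would have to be (virtually) a uniform lattice of $\mathrm{Isom}(\mathbb{H}^d)$, you have no argument to rule this out, only the hope that ``a reflection subgroup structure \dots with a signature incompatible with a Lorentzian realization'' can be found case by case. The paper's Proposition \ref{prop:not_lattice} closes exactly this gap, and the key ingredient you are missing is rigidity of the generating set, not an ad hoc invariant. By Paris's result (Proposition \ref{prop:paris}) the finite kernel from quasi-isometric rigidity is trivial, so $W$ itself would act geometrically on $\mathbb{H}^d$ (the boundary homeomorphism forces $e=d$, as you note). Then Davis's theory of topological reflection actions (Theorem \ref{thm:must_be_reflection}) and the Charney--Davis uniqueness of fundamental generating sets (Theorem \ref{thm:fund_gen_conj}) force this action to be by projective reflections $\tau(s) = \mathrm{Id} - Q(v_s,\cdot)\otimes v_s$ attached to the \emph{given} generators $s \in S$, with Gram matrix $(Q(v_s,v_t))_{s,t}$ equal to the Cosine matrix $\mathsf{C}_W$. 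Since these are $d+2$ vectors in the $(d+1)$-dimensional space $\mathbb{R}^{d,1}$, that Gram matrix must be singular, contradicting $\det(\mathsf{C}_W)\neq 0$ from Proposition \ref{prop:signature}. Without the generating-set rigidity step, an isomorphism of $W$ with a lattice could a priori scramble the reflection structure entirely and no diagram-level signature computation would apply. Two smaller divergences: the paper excludes the complex and quaternionic hyperbolic cases not by $\ell^2$-Betti numbers (which you do not actually compute) but via Davis's cohomology computation with group-ring coefficients, which shows the symmetric space must be a product of real hyperbolic spaces, combined with strong indecomposability and Cooper--Long--Reid to kill higher rank; and commensurability plays no role --- the argument runs at the level of isomorphism throughout.
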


\begin{remarkintro}
With Selberg's lemma, Theorem \ref{thm:tits_rep} also allows us to obtain a finite index torsion-free subgroup $\G$ of $W$ so that $\mathrm{AdS}^{d,1}/\rho(\G)$ is a GHMC AdS manifold and $\G$ is not quasi-isometric to a uniform lattice of a semi-simple Lie group.
\end{remarkintro}

Applying the same method as for Theorem \ref{thm:specifications} and Theorem \ref{thm:tits_rep}, we can prove:

\begin{theointro}\label{thm:tits_rep_H}
Let $W$ be a Coxeter group in Table \ref{Hexamples_dim4}. Then the following hold:
\begin{itemize}
\item the group $W$ is Gromov-hyperbolic;
\item the Gromov boundary of $W$ is homeomorphic to a $3$-dimensional sphere;
\item the signature of $W$ is $(5,1,0)$;
\item the Tits representation $\rho: W \to \mathrm{PO}_{5,1}(\R)$ is quasi-Fuchsian;
\item the group $W$ is not quasi-isometric to a uniform lattice of a semi-simple Lie group.
\end{itemize}
\end{theointro}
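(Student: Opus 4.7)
The proof runs in parallel with the proofs of Theorems \ref{thm:specifications} and \ref{thm:tits_rep}, with the signature $(d,2,0)$ replaced by $(5,1,0)$. For every $W$ in Table \ref{Hexamples_dim4}, the plan is to check sequentially the five bullet points.

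The first three points are combinatorial. Gromov-hyperbolicity is verified via Moussong's criterion (for which the paper provides an alternative proof built on \cite{DGK}): it reduces to a finite check that no standard parabolic subgroup of $W$ is an irreducible affine reflection group of rank at least three, nor a direct product of two infinite irreducible reflection groups. The signature $(5,1,0)$ is obtained by computing the inertia of the $6 \times 6$ Cosine matrix of $W$ directly, diagram by diagram. The boundary $\partial W \cong \mathbb{S}^{3}$ is identified through the nerve $L(W,S)$ of the Coxeter system: when $L(W,S)$ is a triangulation of $\mathbb{S}^{3}$, the Davis complex $\Sigma(W,S)$ is a contractible topological $4$-manifold on which $W$ acts properly cocompactly, and since $W$ is Gromov-hyperbolic its visual boundary coincides with $\partial W$ and is homeomorphic to $\mathbb{S}^{3}$.

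With $s_{W} = (5,1,0)$ the Tits bilinear form has signature $(5,1)$, so the Tits representation yields a discrete faithful morphism $\rho : W \to \mathrm{PO}_{5,1}(\mathbb{R})$. Since $W$ is Gromov-hyperbolic and its Coxeter diagram has no edge of label $\infty$, the $e=1$ instance of Theorem \ref{thm:dgk} (which in this setting recovers the $\mathbb{H}^{5}$-convex cocompactness statement of \cite{DGK}) applies, showing that $\rho(W)$ is convex cocompact in $\mathbb{H}^{5}$. Its limit set coincides with $\partial W \cong \mathbb{S}^{3}$ and is therefore a topologically embedded codimension-one sphere in $\partial \mathbb{H}^{5} = \mathbb{S}^{4}$, which is precisely the quasi-Fuchsian property.

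For the last point, any Gromov-hyperbolic uniform lattice in a semi-simple Lie group must be a uniform lattice in a rank-one simple Lie group, and among those, only $\mathrm{PO}_{4,1}(\mathbb{R})$ and $\mathrm{PU}(2,1)$ have Gromov boundary homeomorphic to $\mathbb{S}^{3}$ (with the round conformal structure in the first case, and the Heisenberg CR structure in the second). Either case would then be ruled out by computing the Ahlfors-regular conformal dimension of $\partial W$ from the combinatorics of the Coxeter diagram via the Bourdon--Kleiner machinery and verifying that it is neither $3$ nor $4$ for the specific groups in Table \ref{Hexamples_dim4}. The main obstacle I expect to be the identification of $\partial W$ with $\mathbb{S}^{3}$ (requiring a careful nerve computation for each example) together with this last QI-rigidity step, whereas hyperbolicity, the signature, and the quasi-Fuchsian property are relatively mechanical consequences of the tools developed earlier in the paper.
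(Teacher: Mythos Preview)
Your treatment of the first four bullets matches the paper's approach closely: Moussong's criterion for hyperbolicity, the nerve computation (which the paper does uniformly rather than diagram-by-diagram: the white and black nodes form two Lann\'er subgroups, so the nerve is the join of two simplex boundaries and hence a sphere), direct signature computation, and Theorem~\ref{thm:dgk} for $\mathbb{H}^{5}$-convex cocompactness.

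The fifth bullet is where your proposal diverges, and not in a workable direction. You propose computing the Ahlfors-regular conformal dimension of $\partial W$ via Bourdon--Kleiner and checking it is neither $3$ nor $4$. This is speculative: there is no general formula extracting the conformal dimension of a Coxeter boundary from the diagram, and no such computation exists for these groups. The paper's argument is far simpler and purely linear-algebraic. First, QI-rigidity (Tukia, Pansu, Chow, Kleiner--Leeb, Eskin--Farb) together with Proposition~\ref{prop:paris} reduces the question to whether $W$ is \emph{isomorphic} to a uniform lattice in some semi-simple Lie group. Then general Coxeter-group facts (Theorem~\ref{thm_folklore}: strong indecomposability and the Cooper--Long--Reid / Gonciulea theorem that infinite Coxeter groups are not irreducible higher-rank lattices) force the symmetric space to be a real hyperbolic space $\mathbb{H}^{e}$. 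Finally (Proposition~\ref{prop:not_lattice}), if $W$ were a uniform lattice in $\mathrm{Isom}(\mathbb{H}^{e})$, then $\partial W \cong \mathbb{S}^{3}$ gives $e=4$, and Lemma~\ref{lem:must_be_cartan1} forces $W$ to act as a projective reflection group on $\mathbb{R}^{4,1}$; the $6\times 6$ Cosine matrix $\mathsf{C}_W$ would then be the Gram matrix of six vectors in a $5$-dimensional space and hence singular --- contradicting the signature $(5,1,0)$ you already computed. No conformal-dimension input is needed; the signature computation itself carries the obstruction.
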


Here, a representation $\rho : \Gamma \to \mathrm{PO}_{d+1,1}(\mathbb{R})$ is called \emph{$\mathbb{H}^{d+1}$-quasi-Fuchsian} (simply \emph{quasi-Fuchsian}) if it is discrete and faithful, $\rho(\Gamma)$ is $\mathbb{H}^{d+1}$-convex cocompact and the Gromov boundary of $\Gamma$ is homeomorphic to a $(d-1)$-dimensional sphere. 

\begin{remarkintro}
In \cite{MR1396674}, Esselmann proved that each Coxeter group $W$ in Table \ref{examples_Esselmann} admits a geometric action on the hyperbolic space $\mathbb{H}^4$ such that a fundamental domain for the $W$-action on $\mathbb{H}^4$ is a convex polytope whose combinatorial type is the product of two triangles. Esselmann’s examples in Table \ref{examples_Esselmann} are adapted to obtain the Coxeter groups in Tables \ref{Hexamples_dim4}, \ref{examples_dim4}, \ref{examples_dim5}, \ref{examples_dim6}, \ref{examples_dim7}, \ref{examples_dim8}. In particular, the nerve of each Coxeter group in Tables \ref{examples_Esselmann}, \ref{Hexamples_dim4}, \ref{examples_dim4} is isomorphic to the boundary complex of the dual polytope of the product of two triangles. We are currently classifying Coxeter groups with nerve isomorphic to the boundary complex of the dual polytope of the product of two simplices of dimension $\geqslant 2$, and in the meanwhile, we found the Coxeter groups in Tables \ref{Hexamples_dim4}, \ref{examples_dim4}, \ref{examples_dim5}, \ref{examples_dim6}, \ref{examples_dim7}, \ref{examples_dim8}.
\end{remarkintro}

We denote by $\chi(\G, \mathrm{PO}_{d,2}(\mathbb{R}) )$ the $\mathrm{PO}_{d,2}(\mathbb{R})$-character variety of $\G$ and by $\chi^{\mathrm{s}}(\G, \mathrm{PO}_{d,2}(\mathbb{R}) )$ the space of strictly GHC-regular characters in $\chi(\G, \mathrm{PO}_{d,2}(\mathbb{R}) )$. In \cite{barbot_close}, Barbot asked the following:  

\begin{quesintro}[Discussion after Theorem 1.5 of \cite{barbot_close}]\label{que:disconnected}
Assume that $\G$ is a lattice of $\mathrm{PO}_{d,1}(\R)$. Is the space $\chi^{\mathrm{s}}(\G, \mathrm{PO}_{d,2}(\mathbb{R}) )$ connected? In other words, does every strictly GHC-regular character deform to a Fuchsian character?
\end{quesintro}

We can extend Question \ref{que:disconnected} to the case when $\Gamma$ is Gromov-hyperbolic:
\begin{quesintro}\label{que:disconnected_ex}
Let $\G$ be a Gromov-hyperbolic group. Assume that $\chi^{\mathrm{s}}(\G, \mathrm{PO}_{d,2}(\mathbb{R}) )$ is non-empty. Is the space $\chi^{\mathrm{s}}(\G, \mathrm{PO}_{d,2}(\mathbb{R}) )$ connected?
\end{quesintro}

The following theorem provides a negative answer to Question \ref{que:disconnected_ex}.

\begin{theointro}\label{thm:disconnected}
Let $W$ be a Coxeter group in Tables \ref{table:barbot2_not-poincare_hyp_dim4} or \ref{table:barbot2_not-poincare_hyp_dim6}, and let $d$ be the rank of $W$ minus 3. Then there exist two characters $[\rho_1], [\rho_2] \in \chi^{\mathrm{s}}(W, \mathrm{PO}_{d,2}(\mathbb{R}))$ such that there is \emph{no} continuous path from $[\rho_1]$ to $[\rho_2]$ in $\chi(W, \mathrm{PO}_{d,2}(\mathbb{R}))$, let alone in the subspace $\chi^{\mathrm{s}}(W, \mathrm{PO}_{d,2}(\mathbb{R}))$.
\end{theointro}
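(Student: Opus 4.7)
The plan is to produce, for each Coxeter group $W$ in Tables \ref{table:barbot2_not-poincare_hyp_dim4} and \ref{table:barbot2_not-poincare_hyp_dim6}, two explicit strictly GHC-regular characters $[\rho_1], [\rho_2]$ separated by an invariant that is locally constant on the whole character variety $\chi(W, \mathrm{PO}_{d,2}(\R))$. The starting observation is that the rank of $W$ is $d+3$, one more than the natural ambient dimension $d+2$ associated with $\mathrm{PO}_{d,2}(\R)$, so the Tits representation is not uniquely determined. Vinberg's theory of linear reflection groups parametrizes representations of $W$ by Cartan matrices $A = (a_{ij})$ whose underlying Coxeter diagram matches that of $W$ (so $a_{ij}a_{ji} = 4\cos^2(\pi/m_{ij})$ whenever $m_{ij}<\infty$, with individual entries determined only up to a diagonal gauge), and the condition that the symmetrized form has signature $(d,2,1)$ cuts out a semialgebraic subset of the Cartan matrix space. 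I would exhibit two Cartan matrices $A_1, A_2$ in this locus whose gauge-equivalence classes lie in distinct connected components, each producing a representation $\rho_i \colon W \to \mathrm{PO}_{d,2}(\R)$ after quotienting by the one-dimensional radical.

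Next, I would confirm that both $\rho_1$ and $\rho_2$ are strictly GHC-regular. Gromov-hyperbolicity of $W$ and the fact that $\partial W$ is homeomorphic to a $(d-1)$-sphere are intrinsic to the abstract Coxeter system, so they can be established once and for all by the same arguments used in the proof of Theorem \ref{thm:specifications}. Then Theorem \ref{thm:dgk}, combined with the absence of $\infty$-labels in the diagrams (to be verified on the tables), gives $\mathrm{AdS}^{d,1}$-convex cocompactness of each $\rho_i(W)$; together with the sphere boundary, this yields strict GHC-regularity exactly as in the proof of Theorem \ref{thm:tits_rep}.

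The main step — and where the bulk of the work lies — is to find an invariant separating $[\rho_1]$ from $[\rho_2]$ that is locally constant on all of $\chi(W, \mathrm{PO}_{d,2}(\R))$. Two natural candidates are: (i) the sign of $\mathrm{tr}\,\rho(w)$ for an explicit word $w \in W$, chosen so that the two traces are nonzero and of opposite signs, yielding a locally constant $\{-1,+1\}$-valued function on the open subset where $\mathrm{tr}\,\rho(w)\neq 0$; or (ii) a discrete feature of the Cartan data after gauge normalization, such as the sign of an off-diagonal entry along a well-chosen edge, or the signature of a principal submatrix attached to a spherical or affine parabolic subsystem, which by Vinberg-type rigidity for finite and affine subdiagrams descends to a locally constant function on $\chi$. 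The main obstacle is to pick $w$ or the submatrix so that the invariant is simultaneously computable on the explicit matrices $A_1, A_2$ and provably locally constant on the whole of $\chi$, not merely on the Anosov locus or on $\chi^{\mathrm{s}}$. Once produced, this invariant immediately forbids any continuous path between $[\rho_1]$ and $[\rho_2]$ in $\chi(W, \mathrm{PO}_{d,2}(\R))$, and a fortiori in the subspace $\chi^{\mathrm{s}}(W, \mathrm{PO}_{d,2}(\R))$, completing the proof.
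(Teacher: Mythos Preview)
Your proposal contains a factual error that undermines the whole plan: every Coxeter diagram in Tables \ref{table:barbot2_not-poincare_hyp_dim4} and \ref{table:barbot2_not-poincare_hyp_dim6} carries exactly one edge labelled $\infty$. This is not incidental; it is the entire mechanism. If there were no $\infty$-labels, then (as your own parametrisation says) every product $a_{ij}a_{ji}$ would be pinned down by $4\cos^2(\pi/m_{ij})$, and after the diagonal gauge there would be a \emph{unique} compatible symmetric Cartan matrix, hence a unique character. You would have no way to produce two distinct $[\rho_1],[\rho_2]$. So your claim ``absence of $\infty$-labels (to be verified on the tables)'' is both wrong and inconsistent with the rest of your outline.

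Once you accept the single $\infty$-edge, the paper's argument is much simpler than the separating-invariant strategy you sketch. The one $\infty$-label gives a one-parameter family of symmetric Cartan matrices $\mathsf{C}_W^\lambda$ (the entry at the $\infty$-edge being $-2(1+\lambda)$). The requirement that the representation land in $\mathrm{PO}_{d,2}(\R)$ forces $\mathrm{rank}(\mathsf{C}_W^\lambda)\le d+2$, i.e.\ $\det(\mathsf{C}_W^\lambda)=0$; this is a quadratic in $\lambda$, and the table entries are precisely those for which it has two distinct positive roots $\lambda_1,\lambda_2$, each giving signature $(d,2,1)$. Theorem \ref{thm:dgk} applies (with $\lambda>0$ the condition $(\mathsf{H}_0)$ holds even in the presence of the $\infty$-edge), yielding strict GHC-regularity. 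For disconnectedness, the paper does not hunt for an invariant at all: it shows each $[\sigma_{\mathsf{C}_W^{\lambda_i}}]$ is an \emph{isolated point} of $\chi(W,\mathrm{PO}_{d,2}(\R))$. Along any path $\tau_t$, each $\tau_t(s)$ stays conjugate to $\tau_0(s)$ (torsion), hence remains an orthogonal reflection; the resulting symmetric Cartan matrix is then $\mathsf{C}_W^{\mu_t}$ for some continuous $\mu_t$ (only one free entry), and the rank constraint forces $\mu_t\in\{\lambda_1,\lambda_2\}$, so $\mu_t$ is constant. Your ``main obstacle'' of choosing a word $w$ or a submatrix simply does not arise.
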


Similarly, we denote by $\chi(\G, \mathrm{PO}_{d+1,1}(\mathbb{R}) )$ the $\mathrm{PO}_{d+1,1}(\mathbb{R})$-character variety of $\G$ and by $\chi^{\mathrm{q}}(\G, \mathrm{PO}_{d+1,1}(\mathbb{R}))$ the space of quasi-Fuchsian characters in $\chi(\G, \mathrm{PO}_{d+1,1}(\mathbb{R}) )$. It follows from work of Barbot--Mérigot \cite{barbot_merigot} and Barbot \cite{barbot_close} that $\chi^{\mathrm{s}}(\G, \mathrm{PO}_{d,2}(\mathbb{R}))$ is a union of connected components of $\chi(\G, \mathrm{PO}_{d,2}(\mathbb{R}))$. However, $\chi^{\mathrm{q}}(\G, \mathrm{PO}_{d+1,1}(\mathbb{R}) )$ is open but not closed in $\chi(\G, \mathrm{PO}_{d+1,1}(\mathbb{R}))$ in general. Nevertheless, we are able to prove:

\begin{theointro}\label{thm:disconnected_H}
Let $W$ be a Coxeter group in Tables \ref{table:barbot2_Quasi_Fuchsian_dim4} or \ref{table:barbot2_Quasi_Fuchsian_dim6}, and let $d$ be the rank of $W$ minus 3. Then there exist two characters $[\rho_1], [\rho_2] \in \chi^{\mathrm{q}}(W, \mathrm{PO}_{d+1,1}(\mathbb{R}) )$ such that there is \emph{no} continuous path from $[\rho_1]$ to $[\rho_2]$ in $\chi(W, \mathrm{PO}_{d+1,1}(\mathbb{R}))$.
\end{theointro}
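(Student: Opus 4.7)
My plan is to mirror the strategy of Theorem \ref{thm:disconnected} in the Riemannian-hyperbolic setting. For each Coxeter group $W$ in Tables \ref{table:barbot2_Quasi_Fuchsian_dim4} and \ref{table:barbot2_Quasi_Fuchsian_dim6}, I would exhibit two explicit quasi-Fuchsian characters $[\rho_1], [\rho_2] \in \chi^{\mathrm{q}}(W, \mathrm{PO}_{d+1,1}(\mathbb{R}))$ together with a continuous invariant on the \emph{full} character variety $\chi(W, \mathrm{PO}_{d+1,1}(\mathbb{R}))$ whose values at $[\rho_1]$ and $[\rho_2]$ lie in distinct connected components of its image.

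First, I would construct the representations. Since $W$ has rank $d+3$ and $\mathrm{PO}_{d+1,1}(\mathbb{R})$ preserves a form on a $(d+2)$-dimensional space, I would look at Cartan matrices $A=(a_{ij})$ obeying the Coxeter constraints $a_{ij}a_{ji} = 4\cos^2(\pi/m_{ij})$ (for $m_{ij} < \infty$) and $a_{ij}a_{ji} \geq 4$ (for $m_{ij} = \infty$) with signature $(d+1, 1, 1)$; quotienting out the one-dimensional radical yields a Tits-type representation into $\mathrm{PO}_{d+1,1}(\mathbb{R})$. By selecting two different sign patterns on the free off-diagonal entries (typically on edges of label $\infty$), I would obtain two matrices $A_1, A_2$ and hence two representations $\rho_1, \rho_2 : W \to \mathrm{PO}_{d+1,1}(\mathbb{R})$.

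Next, I would verify that both $\rho_i$ are $\mathbb{H}^{d+1}$-quasi-Fuchsian. The Gromov-hyperbolicity of $W$ and the identification $\partial W \cong \mathbb{S}^{d-1}$ would follow from a Moussong/Davis-complex analysis analogous to the proof of Theorem \ref{thm:specifications}. Applying Theorem \ref{thm:dgk} to each $\rho_i$ then gives $\mathbb{H}^{d+1}$-convex cocompactness of $\rho_i(W)$, which together with the sphere-boundary condition yields the quasi-Fuchsian property, placing both $[\rho_1]$ and $[\rho_2]$ in $\chi^{\mathrm{q}}(W, \mathrm{PO}_{d+1,1}(\mathbb{R}))$.

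The crux is path-disconnectedness in the full character variety. I would construct a continuous invariant $\tau : \chi(W, \mathrm{PO}_{d+1,1}(\mathbb{R})) \to \mathbb{R}$, polynomial in traces of specific words in the $\rho(s_i)$, whose image is forced by the abstract Coxeter relations into a proper closed subset $S \subset \mathbb{R}$ with $\tau([\rho_1])$ and $\tau([\rho_2])$ in different components of $S$. A natural candidate, parallel to the AdS construction of Theorem \ref{thm:disconnected}, is a discriminant-type invariant attached to a pair or triple of generators whose sign on Coxeter-type representations detects the sign-pattern difference between $A_1$ and $A_2$, extended globally to $\chi$ by a suitable trace polynomial. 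The main obstacle, absent from Theorem \ref{thm:disconnected}, is that $\chi^{\mathrm{q}}(W, \mathrm{PO}_{d+1,1}(\mathbb{R}))$ is open but \emph{not} closed in $\chi(W, \mathrm{PO}_{d+1,1}(\mathbb{R}))$, so I cannot deduce disconnectedness via Barbot's component argument and must verify that no representation --- quasi-Fuchsian or not --- interpolates between the two sides. This final verification is likely where most of the concrete work lies, and will presumably be carried out by case-by-case computation of the candidate invariant for each diagram in the tables.
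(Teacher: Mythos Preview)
Your proposal has a genuine gap: both the construction of $\rho_1,\rho_2$ and the disconnectedness mechanism are different from what actually works, and the mechanism you propose is not carried to a point where it could succeed.

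\medskip

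\textbf{Construction.} There are no ``sign patterns'' to exploit. In $\mathrm{PO}_{d+1,1}(\mathbb{R})$ each reflection has the form $\mathrm{Id}-Q(v,\cdot)\otimes v$ for the invariant form $Q$, so the Cartan matrix $(Q(v_s,v_t))_{s,t}$ is automatically symmetric with nonpositive off-diagonal entries; on finite-label edges it is pinned to $-2\cos(\pi/M_{st})$. The only freedom is the value at the unique $\infty$-edge, i.e.\ the one-parameter family $\mathsf{C}_W^\lambda$. The two representations arise as $\sigma_{\mathsf{C}_W^{\lambda_i}}$ where $\lambda_1\neq\lambda_2$ are the two \emph{positive} roots of the quadratic $f(\lambda)=\det(\mathsf{C}_W^\lambda)$; Tables \ref{table:barbot2_Quasi_Fuchsian_dim4} and \ref{table:barbot2_Quasi_Fuchsian_dim6} are precisely the parameter ranges for which the discriminant $16\det(\mathsf{C}_{W_{S_1}})\det(\mathsf{C}_{W_{S_2}})$ is positive with both $\det(\mathsf{C}_{W_{S_i}})<0$, forcing signature $(d+1,1,1)$ at each root.

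\medskip

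\textbf{Disconnectedness.} No trace-polynomial invariant is needed, and there is no extra obstacle beyond Theorem \ref{thm:disconnected}: the paper proves both theorems by the \emph{same} argument, which shows each $[\sigma_{\mathsf{C}_W^{\lambda_i}}]$ is an isolated point of the full character variety. Along any path $\tau_t$ in $\chi(W,\mathrm{PO}_{d+1,1}(\mathbb{R}))$, each generator (being torsion) keeps its conjugacy class, so $\tau_t(s)$ remains a projective reflection; the associated symmetric Cartan matrix is therefore $\mathsf{C}_W^{\mu_t}$ for a continuous $\mu_t$ (one $\infty$-edge means one free entry). Since $\tau_t$ lands in a $(d+2)$-dimensional orthogonal group, $\mathrm{rank}(\mathsf{C}_W^{\mu_t})\le d+2$, i.e.\ $f(\mu_t)=0$, so $\mu_t$ is locked at $\lambda_i$. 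The non-closedness of $\chi^{\mathrm{q}}$ never enters.
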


However, Question \ref{que:disconnected} is still an open problem.

\begin{remarkintro}
Each Coxeter group $W$ in Tables \ref{table:barbot2_not-poincare_hyp_dim4}, \ref{table:barbot2_not-poincare_hyp_dim6}, \ref{table:barbot2_Quasi_Fuchsian_dim4}, \ref{table:barbot2_Quasi_Fuchsian_dim6} is not quasi-isometric to $\mathbb{H}^d$ even though it is a Gromov-hyperbolic group whose boundary is homeomorphic to $\S^{d-1}$. 
\end{remarkintro}

\begin{remarkintro}
In \cite{tumarkin_n+3}, Tumarkin proved that each Coxeter group $W$ in Table \ref{table:Tumarkin_dim4} (resp. Table \ref{table:Tumarkin_dim6}) admits a geometric action on the hyperbolic space $\mathbb{H}^4$ (resp. $\mathbb{H}^6$). Tumarkin’s examples in Tables \ref{table:Tumarkin_dim4}, \ref{table:Tumarkin_dim6} are adapted to obtain the Coxeter groups in Tables \ref{table:barbot2_not-poincare_hyp_dim4}, \ref{table:barbot2_not-poincare_hyp_dim6}, \ref{table:barbot2_Quasi_Fuchsian_dim4}, \ref{table:barbot2_Quasi_Fuchsian_dim6}. In particular, the nerves of Coxeter groups in Table \ref{table:Tumarkin_dim4} (resp. Table \ref{table:Tumarkin_dim6}) are isomorphic to the nerves of Coxeter groups in Tables \ref{table:barbot2_not-poincare_hyp_dim4}, \ref{table:barbot2_Quasi_Fuchsian_dim4} (resp. Tables \ref{table:barbot2_not-poincare_hyp_dim6}, \ref{table:barbot2_Quasi_Fuchsian_dim6}).
\end{remarkintro}

In Section \ref{section:Moussong}, we also recover Moussong's hyperbolicity criterion for Coxeter groups (see Theorem \ref{thm:moussong}) built on \cite{DGK}. 

\subsection*{Acknowledgments}

We are thankful for helpful conversations with Ryan Greene, Olivier Guichard, Fanny Kassel and Anna Wienhard. We thank Daniel Monclair for introducing Question 5.2 in \cite{survey_all}  to the second author a long time ago. Finally, we would like to thank the referee for carefully reading this paper and suggesting several improvements.

G.-S. Lee was supported by the European Research Council under ERC-Consolidator Grant 614733 and by DFG grant LE 3901/1-1 within the Priority Programme SPP 2026 “Geometry at Infinity”, and he acknowledges support from U.S. National Science Foundation grants DMS 1107452, 1107263, 1107367 “RNMS: Geometric structures And Representation varieties” (the GEAR Network). 

\section{Coxeter groups and Davis complexes}\label{section:Coxetergroups}

A \emph{pre-Coxeter system} is a pair $(W,S)$ of a group $W$ and a set $S$ of elements of $W$ of order $2$ which generates $W$. A \emph{Coxeter matrix} $M=(M_{st})_{s,t \in S}$ on a set $S$ is a symmetric matrix with the entries $M_{st} \in \{1,2, \dotsc, m, \dotsc,\infty \}$ such that the diagonal entries $M_{ss}=1$ and others $M_{st} \neq 1$. For any pre-Coxeter system $(W,S)$, we have a Coxeter matrix $M_W$ on $S$ such that each entry $(M_W)_{st}$ of $M_W$ is the order of $st$. 
To any Coxeter matrix $M=(M_{st})_{s,t \in S}$ is associated a presentation for a group $\hat{W}_M$: the set of generators for $\hat{W}_M$ is $S$ and the set of relations is
$  \{ (st)^{M_{st}}=1  \mid  M_{st} \neq \infty \}$. A pre-Coxeter system $(W,S)$ is called a \emph{Coxeter system} if the surjective homomorphism $\hat{W}_{M_W} \to W$, defined by $s \mapsto s$, is an isomorphism. If this is the case, then $W$ is a \emph{Coxeter group} and $S$ is a \emph{fundamental set of generators}. We denote the Coxeter group $W$ also by $W_S$ or $W_{S,M}$ to indicate the fundamental set of generators or the Coxeter matrix of $(W,S)$. The \emph{rank} of $W_S$ is the cardinality $\sharp S$ of $S$. 

\medskip

The \emph{Coxeter diagram} of $W_{S,M}$ is a labeled graph $\mathsf{G}_W$ such that (\emph{i}) the set of nodes (i.e. vertices) of $\mathsf{G}_W$ is the set $S$; (\emph{ii}) two nodes $s,t \in S$ are connected by an edge $\overline{st}$ of $\mathsf{G}_W$ if and only if $M_{st} \in \{3,\dotsc, m, \dotsc,\infty \}$; (\emph{iii}) the label of the edge $\overline{st}$ is $M_{st}$. It is customary to omit the label of the edge $\overline{st}$ if $M_{st} = 3$. A Coxeter group $W$ is \emph{irreducible} if the Coxeter diagram $\mathsf{G}_W$ is connected. The \emph{Cosine matrix} of $W_{S,M}$ is an $S \times S$ symmetric matrix $\mathsf{C}_W$ whose entries are:
 $$(\mathsf{C}_W)_{st} = -2\cos \left( \frac{\pi}{M_{st}} \right) \quad \,\,\textrm{for every }  s,t \in S$$ 

\medskip

An irreducible Coxeter group $W_S$ is \emph{spherical} (resp. \emph{affine}, resp. \emph{Lann{\'e}r}) if for every $s \in S$, the $(s,s)$-minor of $\mathsf{C}_W$ is positive definite and the determinant $\det(\mathsf{C}_W)$ of $\mathsf{C}_W$ is positive (resp. zero, resp. negative). Remark that every irreducible Coxeter group $W$ is spherical, affine or \emph{large}, \ie there is a surjective homomorphism of a finite index subgroup of $W$ onto a free group of rank $\geqslant 2$ (see Margulis--Vinberg \cite{margu_vin}). 
For a Coxeter group $W$ (not necessarily irreducible), each connected component of the Coxeter diagram $\mathsf{G}_W$ corresponds to a Coxeter group, called a \emph{component} of the Coxeter group $W$. A Coxeter group $W$ is \emph{spherical} (resp. \emph{affine}) if each component of $W$ is spherical (resp. affine).
We will often refer to Appendix \ref{classi_diagram} for the list of all the irreducible spherical, irreducible affine and Lann{\'e}r Coxeter diagrams. 

\medskip

Let $(W,S)$ be a Coxeter system. For each $T\subset S$, the subgroup $W_T$ of $W$ generated by $T$ is called a \emph{special subgroup of $W$}. It is well-known that $(W_T,T)$ is a Coxeter system. A subset $T \subset S$ is said to be “\emph{something}” if the Coxeter group $W_T$ is “something”. For example, the word “something” can be replaced by “spherical”, “affine”, “Lannér” and so on. Two subsets $T, U \subset S$ are \emph{orthogonal} if $M_{tu}=2$ for every $t \in T$ and every $u \in U$. This relationship is denoted $T \perp U$.

\medskip

A \emph{poset} is a partially ordered set. The \emph{nerve} $N_W$ of $W$ is the poset of all non-empty spherical subsets of $S$ partially ordered by inclusion. We remark that the nerve $N_W$ is an abstract simplicial complex. Recall that an \emph{abstract simplicial complex} $\mathcal{S}$ is a pair $(V,\mathcal{E})$ of a set $V$, which we call the \emph{vertex set} of $\mathcal{S}$, and a collection $\mathcal{E}$ of (non-empty) finite subsets of $V$ such that (\emph{i}) for each $v \in V$, $\{ v \} \in \mathcal{E}$; (\emph{ii}) if $T \in \mathcal{E}$ and if $\varnothing \neq U \subset T$, then $U \in \mathcal{E}$. An element of $\mathcal{E}$ is a \emph{simplex} of $\mathcal{S}$, and the \emph{dimension} of a simplex $T$ is $\sharp T - 1$. The vertex set of the nerve $N_{W}$ of a Coxeter group $W_S$ is the set of generators $S$ and a non-empty subset $T \subset S$ is a simplex of $N_{W}$ if and only if $T$ is spherical.

\medskip

The \emph{opposite} poset to a poset $\mathcal{P}$ is the poset $\mathcal{P}^{op}$ with the same underlying set and with the reversed order relation. Given a convex polytope $P$, let $\mathcal{F}(\partial P)$ denote the set of all non-empty faces of the boundary $\partial P$ of $P$ partially ordered by inclusion.  A polytope $P$ is \emph{simplicial} (resp. \emph{simple}) if $\mathcal{F}(\partial P)$ (resp. $\mathcal{F}( \partial P)^{op}$) is a simplicial complex.

\begin{remark}[The nerve of a geometric reflection group, following Example 7.1.4 of \cite{davis_book}]
Let $P$ be a simple convex polytope in $\mathbb{X}= \R^d$ or $\mathbb{H}^d$ with dihedral angles integral submultiples of $\pi$. To the polytope $P$ is associated a Coxeter matrix $M = (M_{st})_{s, t \in S}$ on a set $S$: (\emph{i}) the set $S$ consists of all facets\footnote{A \emph{facet} of a convex polytope $P$ is a face of codimension $1$.} of $P$; (\emph{ii}) for each pair of distinct facets $s, t$ of $P$, if $s, t$ are adjacent\footnote{Two facets $s, t$ of $P$ are \emph{adjacent} if the intersection of $s$ and $t$ is a face of codimension $2$.} and if the dihedral angle between $s$ and $t$ is $\tfrac{\pi}{m_{st}}$, then we set $M_{st} = m_{st}$, and otherwise $M_{st} = \infty$. 
Let $W_P$ be the reflection group generated by the set of reflections $\sigma_s$ across the facets $s$ of $P$. The homomorphism $\sigma : W_{S,M} \to W_P$ defined by $\sigma(s) = \sigma_s$ is an isomorphism by Poincaré's polyhedron theorem, and so the Coxeter group $W_{S,M}$ obtained in this way is called a \emph{geometric reflection group}. Moreover, the nerve $N_W$ of the Coxeter group $W_{S,M}$ identifies with the simplicial complex $\mathcal{F}(\partial P^\ast) = \mathcal{F}(\partial P)^{op}$, where $P^\ast$ is the dual polytope of $P$, hence $N_W$ is PL homeomorphic to a $(d-1)$-dimensional sphere.
\end{remark}

Inspired by the previous remark, we make the following:

\begin{definition}
A Coxeter group $W$ is an \emph{abstract geometric reflection group of dimension $d$} if the geometric realization of the nerve $N_W$ of $W$ is PL homeomorphic to a $(d-1)$-dimensional sphere.
\end{definition}

 A \emph{spherical coset} of a Coxeter group $W$ is a coset of a spherical special subgroup of $W$. We denote by $W \mathcal{S}$ the set of all spherical cosets of $W$, i.e. 
$$  W \mathcal{S} = \bigsqcup_{T \in N_W \cup \{ \varnothing \} } W / W_T$$  
It is partially ordered by inclusion. To any poset $\mathcal{P}$ is associated an abstract simplicial complex 
$\mathrm{Flag}(\mathcal{P})$ consisting of all non-empty, finite, totally ordered subsets of $\mathcal{P}$. The geometric realization of $\mathrm{Flag}( W \mathcal{S})$ is called the \emph{Davis complex} $\Sigma_W$ of $W$.

\begin{example}
If $W$ is generated by the reflections in the sides of a square tile, then $N_W$ is a cyclic graph of length 4, and $\Sigma_W$ is the complex obtained by subdividing each tile of the infinite square grid into 8 isosceles right triangles, meeting at the center.
\end{example}

\begin{theorem}[Moussong \cite{moussong}, Stone \cite{MR0402648}, Davis--Januszkiewicz \cite{MR1131435}]\label{thm:merci_davis}
If a Coxeter group $W$ is an abstract geometric reflection group of dimension $d$, then the following hold:
\begin{itemize}
\item the Davis complex $\Sigma_W$ of $W$ admits a natural CAT(0)-metric;

\item the cell complex $\Sigma_W$ is homeomorphic to $\R^d$;

\item the CAT(0) boundary of $\Sigma_W$ is homeomorphic to $\mathbb{S}^{d-1}$.
\end{itemize} 
\end{theorem}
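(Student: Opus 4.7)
The theorem packages together three results with different authors, so the plan is to outline each piece separately and indicate how they fit. The approach is to (i) put a canonical piecewise Euclidean metric on $\Sigma_W$ using Coxeter cells, (ii) verify the CAT(0) inequality via a link condition, and then (iii) upgrade topological conclusions using the hypothesis that $N_W$ is a PL sphere.

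\textbf{Step 1 (Moussong's metric).} To each spherical coset $wW_T \in W\mathcal{S}$, associate a convex polytope $C_T$: the convex hull of the $W_T$-orbit of a well-chosen point in the canonical (geometric) representation of the finite Coxeter group $W_T$, the latter acting by Euclidean reflections on $\R^{\sharp T}$. The face poset of $C_T$ is the opposite poset to the poset of spherical cosets contained in $wW_T$, so the $C_T$'s glue along their faces to give $\Sigma_W$ the structure of a piecewise Euclidean cell complex whose cells are isometric to Coxeter polytopes. The cell at the identity coset is a point, and the top-dimensional cells correspond to cosets of maximal spherical special subgroups.

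\textbf{Step 2 (CAT(0) via links).} By Gromov's link criterion, the piecewise Euclidean complex $\Sigma_W$ is CAT(0) if and only if it is simply connected and the link of every vertex is CAT(1). Simple connectedness of $\Sigma_W$ is a standard consequence of Davis's fundamental observation that $\Sigma_W$ is contractible (built up cell-by-cell using a cofinal sequence of convex subcomplexes). For the link at a vertex $wW_\varnothing$, one checks from the cellulation of Step 1 that it is a piecewise spherical complex whose underlying simplicial complex is the nerve $N_W$, with simplex corresponding to $T$ metrized as the spherical Coxeter simplex dual to the canonical representation of $W_T$. Moussong's key lemma then asserts that this "Moussong metric" on the geometric realization of $N_W$ is CAT(1); the proof is a combinatorial analysis showing every closed geodesic has length $\geq 2\pi$ using the structure of irreducible spherical, affine, and Lannér subdiagrams. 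This is the main technical obstacle.

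\textbf{Step 3 (Topology and boundary).} Assume now that the geometric realization $|N_W|$ is PL homeomorphic to $\S^{d-1}$. The link of each vertex in $\Sigma_W$ is a PL subdivision of $|N_W|$, hence a PL $(d-1)$-sphere; by induction on the dimension of cells, links of all cells are PL spheres of the appropriate dimension. Therefore $\Sigma_W$ is a PL $d$-manifold. Combined with contractibility (Davis), $\Sigma_W$ is a contractible topological $d$-manifold; since it is also CAT(0), it is simply connected at infinity (CAT(0) manifolds are, via geodesic retraction), and classical recognition results (due to Stone and Davis--Januszkiewicz in this setting) yield $\Sigma_W \cong \R^d$. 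Finally, the visual boundary $\partial_\infty \Sigma_W$ of a CAT(0) proper geodesic space that is also a topological $d$-manifold homeomorphic to $\R^d$ is homeomorphic to $\S^{d-1}$: the geodesic retraction $\Sigma_W \cup \partial_\infty \Sigma_W \to \partial_\infty \Sigma_W$ from any basepoint, together with the fact that the link of a vertex (a PL $(d-1)$-sphere) serves as the combinatorial model for this boundary up to homeomorphism, gives the identification.

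The hard part is Step 2, specifically proving that Moussong's metric on $|N_W|$ is CAT(1); everything else is formal manipulation of Davis complexes or a standard application of CAT(0) and PL-manifold technology. The other delicate point is Step 3's appeal to a recognition principle for $\R^d$, which in low dimensions is straightforward but in general requires either Stone's argument via the cellulation of $\Sigma_W$ or the explicit CAT(0) geodesic retraction to exhibit a proper homotopy equivalence with $\R^d$.
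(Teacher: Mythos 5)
Your outline follows the same route as the paper's own sketch (which simply walks through the relevant theorems in Davis's book): Moussong's piecewise Euclidean metric and the CAT(1) link condition on the nerve give the first bullet, the PL-sphere hypothesis on $N_W$ makes $\Sigma_W$ a PL $d$-manifold, Stone's theorem gives $\Sigma_W \cong \R^d$, and Davis--Januszkiewicz give the boundary sphere. Steps 1 and 2 are accurate summaries of that argument.

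There is, however, a genuine flaw in your justification of the third bullet. You assert that the visual boundary of a proper CAT(0) space that is a topological $d$-manifold homeomorphic to $\R^d$ is homeomorphic to $\S^{d-1}$. This is false in general: Davis and Januszkiewicz themselves constructed, via hyperbolization of non-simply-connected homology spheres, CAT(0) (even CAT($-1$)) piecewise Euclidean manifolds homeomorphic to $\R^n$ whose ideal boundary is \emph{not} homeomorphic to $\S^{n-1}$ (it fails to be simply connected "at infinity" in the appropriate sense). That is precisely why the third bullet carries their name rather than being a formal consequence of the first two. The correct argument uses the PL structure in an essential way: one realizes $\partial_\infty \Sigma_W$ as an inverse limit of the metric spheres (equivalently, of subdivisions of the vertex links, which are PL $(d-1)$-spheres) under geodesic retraction, and checks that the bonding maps are near-homeomorphisms, so the inverse limit is again $\S^{d-1}$. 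Your parenthetical remark that the PL link "serves as the combinatorial model for the boundary" gestures at this, but the general principle you lean on is not available, so the step as written does not close.
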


\begin{proof}[Sketch of the proof]  
For the reader’s convenience, we outline the important steps of the proof as in the book \cite{davis_book} and refer to the relevant theorems in \cite{davis_book}.

Moussong showed that the Davis complex $\Sigma_W$ equipped with its natural piecewise Euclidean structure is CAT(0) (see Theorem 12.3.3), and so $\Sigma_W$ is contractible (see also Theorem 8.2.13). Since (the geometric realization of) the nerve of $W$ is PL homeomorphic to a $(d-1)$-dimensional sphere, $\Sigma_W$ is a PL $d$-manifold (see Theorem 10.6.1). Therefore, the cell complex $\Sigma_W$ is a simply connected, nonpositively curved, piecewise Euclidean, PL manifold. By Stone’s theorem \cite{MR0402648}, $\Sigma_W$ is homeomorphic to $\R^d$ and moreover by Davis--Januszkiewicz’s theorem, the boundary of $\Sigma_W$ is a $(d-1)$-sphere (see Theorem I.8.4). 
\end{proof}

\section{Proof of Theorem \ref{thm:specifications}}

\begin{proposition}\label{prop:nerve_are_sphere}
Let $(W,S)$ be a Coxeter system with Coxeter diagram $\mathsf{G}_W$ in Tables \ref{Hexamples_dim4}, \ref{examples_dim4}, \ref{examples_dim5}, \ref{examples_dim6}, \ref{examples_dim7}, \ref{examples_dim8} and let $d$ be the rank of $W_S$ minus $2$. Then $W_S$ is an abstract geometric reflection group of dimension $d$.
\end{proposition}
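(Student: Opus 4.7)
The plan is to show that, for each $W$ in the cited tables, the nerve $N_W$ is isomorphic as an abstract simplicial complex to $\mathcal{F}(\partial Q^*)$ where $Q = \Delta^{p} \times \Delta^{q}$ is a product of two simplices with $p+q = d$ and $p,q \geq 1$ (with $p,q \geq 2$ in the cases arising from the remark preceding the proposition). Since $Q$ is a simple convex polytope of dimension $d$, its dual $Q^{*}$ is simplicial of dimension $d$, and $\mathcal{F}(\partial Q^{*})$ is PL homeomorphic to $\mathbb{S}^{d-1}$; this immediately yields the statement.

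The combinatorial target is explicit. The facets of $Q$ split into two families $\{A_{0},\dots,A_{p}\}$ (of the form $\partial_{i}\Delta^{p} \times \Delta^{q}$) and $\{B_{0},\dots,B_{q}\}$ (of the form $\Delta^{p} \times \partial_{j}\Delta^{q}$), giving $p+q+2$ facets in total, matching the rank of $W$. Dually, the vertex set of $\partial Q^{*}$ decomposes as $\{a_{0},\dots,a_{p}\} \sqcup \{b_{0},\dots,b_{q}\}$, and a non-empty subset $T$ of this vertex set is a simplex of $\partial Q^{*}$ if and only if $T \not\supseteq \{a_{0},\dots,a_{p}\}$ and $T \not\supseteq \{b_{0},\dots,b_{q}\}$: indeed, $T$ is a face of $\partial Q^{*}$ exactly when there is a vertex of $Q$ (i.e.\ a pair $(v_{i},w_{j})$) whose dual facet contains $T$, i.e.\ when we can choose one $a_{i}$ and one $b_{j}$ to omit.

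Next I match generators. As signalled in the remark preceding the proposition, the tables are built so that the fundamental generating set $S$ splits canonically as $S = S_{1} \sqcup S_{2}$ with $|S_{1}| = p+1$ and $|S_{2}| = q+1$. To conclude $N_{W} \cong \mathcal{F}(\partial Q^{*})$ it then suffices to prove two claims by inspection of each Coxeter diagram $\mathsf{G}_{W}$ in the tables:
\begin{enumerate}[label=(\roman*)]
\item every non-empty subset $T \subseteq S$ with $T \not\supseteq S_{1}$ and $T \not\supseteq S_{2}$ is spherical;
\item the special subgroups $W_{S_{1}}$ and $W_{S_{2}}$ are both non-spherical.
\end{enumerate}
Since being spherical is hereditary for subsets, (i) and (ii) together force the spherical subsets of $S$ to be exactly those described in (i), which is precisely the simplex condition for $\partial Q^{*}$.

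In practice, (i) reduces to checking that each induced subdiagram $\mathsf{G}_{W_{T}}$ on such a $T$ is a disjoint union of irreducible spherical Coxeter diagrams, which one reads off from the list in Appendix \ref{classi_diagram}; the edge labels in the tables are tailored exactly so that after deleting one node from $S_{1}$ and one from $S_{2}$ the remaining diagram falls apart into components of type $A_{n}$, $B_{n}$, $D_{n}$, $H_{n}$, $F_{4}$, etc. Claim (ii) is typically automatic: $\mathsf{G}_{W_{S_{1}}}$ (respectively $\mathsf{G}_{W_{S_{2}}}$) appears as a connected affine or Lannér diagram from the appendix (e.g.\ $\widetilde{A}_{n}$, $\widetilde{C}_{n}$, or a hyperbolic simplex diagram), hence is not spherical. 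The main obstacle is not conceptual but bookkeeping: one must run through every diagram in Tables \ref{Hexamples_dim4}, \ref{examples_dim4}, \ref{examples_dim5}, \ref{examples_dim6}, \ref{examples_dim7}, \ref{examples_dim8} and identify the correct splitting $S = S_{1} \sqcup S_{2}$, and then verify (i) and (ii); once the splitting is fixed the check is a finite mechanical comparison with the classification tables of the appendix.
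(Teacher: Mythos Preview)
Your proposal is correct and follows essentially the same approach as the paper: the paper also splits $S = S_1 \sqcup S_2$ (white and black nodes), checks that $W_{S_1}$ and $W_{S_2}$ are Lann\'er (hence non-spherical) and that $W_T$ is spherical iff $T \not\supseteq S_1$ and $T \not\supseteq S_2$, then identifies $N_W$ with the join $N_{W_{S_1}} * N_{W_{S_2}} \cong \partial\Delta^{p} * \partial\Delta^{q} \cong \mathbb{S}^{d-1}$. Your phrasing via $\mathcal{F}(\partial(\Delta^{p}\times\Delta^{q})^{*})$ is just the dual description of the same simplicial complex, and the paper likewise leaves the case-by-case verification to inspection against Appendix~\ref{classi_diagram}.
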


\begin{proof}
Let $S_1$ be the set of white nodes of $\mathsf{G}_W$ and $S_2$ the set of black nodes of $\mathsf{G}_W$ so that $S = S_1 \cup S_2$. Note that the colors are just for reference and have no influence on the definition of the group. Using the tables in Appendix \ref{classi_diagram}, check that:
\begin{itemize}
\item the special subgroups $W_{S_1}$ and $W_{S_2}$ are Lannér;
\item the special subgroup $W_T$ with $T \subset S$ is spherical if and only if $S_1 \nsubset T$ and $S_2 \nsubset T$.
\end{itemize}

It therefore follows that the nerve $N_W$ of $W$ is isomorphic to the nerve of the Coxeter group $W_{S_1} \times W_{S_2}$ (as a poset). In other words, the nerve $N_W$ is isomorphic to the join of $N_1$ and $N_2$, where $N_i$ denotes the nerve of $W_{S_i}$ for each $i \in \{ 1,2 \}$. 

\medskip

Moreover, since $W_{S_i}$ is Lannér, its nerve $N_i$ is isomorphic to $\mathcal{F}( \partial \Delta_i)$ where $\Delta_i$ is the simplex of dimension $n_i = \sharp S_i - 1$, and in particular it is a simplicial complex whose geometric realization is a $(n_i - 1)$-dimensional sphere. Finally, since the join of the $n$-sphere and the $m$-sphere is the $(n+m+1)-$sphere, the geometric realization of the nerve $N_W$ is a $(d-1)$-dimensional sphere (recall that $d=n_1+n_2$).
\end{proof}

\begin{remark}
The proof of Proposition \ref{prop:nerve_are_sphere} is essentially the same as in Example 12.6.8 of \cite{davis_book} (see also Section 18 of Moussong \cite{moussong}).
\end{remark}

\begin{corollary}\label{cor:davis_details}
In the setting of Proposition \ref{prop:nerve_are_sphere}, the Coxeter group $W$ is Gromov-hyperbolic and the Gromov boundary of $W$ is a $(d-1)$-dimensional sphere.
\end{corollary}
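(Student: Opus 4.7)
The overall plan is to combine Moussong's hyperbolicity criterion (which the paper re-establishes later as Theorem \ref{thm:moussong}) with Theorem \ref{thm:merci_davis}: the criterion gives Gromov-hyperbolicity of $W$, while the geometric action of $W$ on the CAT(0) Davis complex $\Sigma_W$ promotes the CAT(0) sphere boundary to the Gromov boundary of $W$.

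Moussong's criterion asserts that a Coxeter system $(W,S)$ is Gromov-hyperbolic if and only if $W$ contains neither (i) an irreducible affine special subgroup of rank $\geq 3$, nor (ii) a pair of non-spherical special subgroups $W_{T_1}, W_{T_2}$ with $T_1 \perp T_2$. I would verify both conditions using the dichotomy already isolated in the proof of Proposition \ref{prop:nerve_are_sphere}, namely that a subset $T \subset S$ is non-spherical if and only if it contains one of the two Lannér subsets $S_1$ or $S_2$. For (ii), orthogonality forces $T_1$ and $T_2$ to be disjoint (since $M_{ss}=1 \neq 2$), so one must contain $S_1$ and the other $S_2$, which would imply $S_1 \perp S_2$; a direct inspection of the tables shows that the black and white nodes are joined by at least one edge, giving the required contradiction. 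For (i), any irreducible affine special subgroup $W_T$ is infinite hence non-spherical, so $T$ contains $S_1$ or $S_2$, say $T \supset S_1$; but the Cosine matrix $\mathsf{C}_{W_T}$ is positive semidefinite by affineness, so its principal submatrix $\mathsf{C}_{W_{S_1}}$ is also positive semidefinite, contradicting $\det \mathsf{C}_{W_{S_1}} < 0$ from the Lannér assumption. (Note that $|S_i| \geq 3$ holds automatically, since there are no Lannér groups of rank $\leq 2$, so the rank $\geq 3$ hypothesis is not an issue.)

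Once Gromov-hyperbolicity is established, the boundary identification follows from Theorem \ref{thm:merci_davis} by standard means: $W$ acts properly discontinuously and cocompactly by isometries on the proper CAT(0) space $\Sigma_W$, so $W$ is quasi-isometric to $\Sigma_W$ and the two share the same Gromov boundary; for a proper CAT(0) space admitting a cocompact isometric group action, Gromov-hyperbolicity forces the Gromov boundary and the CAT(0) boundary to coincide, and the latter is $\mathbb{S}^{d-1}$ by Theorem \ref{thm:merci_davis}. The only real delicacy is logical rather than mathematical: since the paper's proof of Moussong's criterion is deferred to Section \ref{section:Moussong}, one should either cite Moussong's original theorem at this point, or else bypass the criterion altogether by applying the CAT(0) Flat Plane Theorem directly to $\Sigma_W$, ruling out isometrically embedded flat planes via the same signature and orthogonality arguments.
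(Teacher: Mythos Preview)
Your proposal is correct and follows essentially the same approach as the paper: invoke Moussong's criterion for hyperbolicity, then use Theorem \ref{thm:merci_davis} together with the geometric action on $\Sigma_W$ to identify the Gromov boundary with the CAT(0) boundary $\mathbb{S}^{d-1}$. The only difference is that the paper leaves the verification of Moussong's criterion as ``easy to verify'', whereas you spell it out (correctly) via the dichotomy from Proposition \ref{prop:nerve_are_sphere}; your remark about logical ordering is apt, but note that Theorem \ref{thm:moussong} is stated with a reference to Moussong's original thesis, so citing it here is not circular.
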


\begin{proof}
First, we know from Theorem \ref{thm:merci_davis} that the Davis complex $\Sigma_W$ of $W$ is a CAT(0) space homeomorphic to $\R^{d}$ and that the CAT(0) boundary of $W$ is homeomorphic to $\mathbb{S}^{d-1}$. Second, it is easy to verify that the Coxeter group $W$ is Gromov-hyperbolic using Moussong's hyperbolicity criterion (see Theorem \ref{thm:moussong}). Finally, since the action of $W$ on $\Sigma_W$ is proper and cocompact, we have that $W$ is quasi-isometric to $\Sigma_W$ and the Gromov boundary of $W$ identifies to the CAT(0) boundary of $\Sigma_W$.
\end{proof}

\begin{theorem}[Moussong’s hyperbolicity criterion \cite{moussong}]\label{thm:moussong}
Assume that $W_S$ is a Coxeter group. Then the group $W_S$ is Gromov-hyperbolic if and only if $S$ does not contain any affine subset of rank $\geqslant 3$ nor two orthogonal non-spherical subsets.
\end{theorem}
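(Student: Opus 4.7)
The necessity (\emph{only if}) direction is elementary group theory. If $T \subset S$ is affine of rank $r \geq 3$, then by the classification of irreducible affine Coxeter groups $W_T$ is virtually $\Z^{r-1}$, and since $r-1 \geq 2$ the group $W_T$ contains $\Z^2$; no Gromov-hyperbolic group can contain $\Z^2$. If $T_1, T_2 \subset S$ are orthogonal non-spherical subsets, then the block of $M$ indexed by $T_1 \times T_2$ is identically $2$, so $W_{T_1 \cup T_2} = W_{T_1} \times W_{T_2}$ is a direct product of two infinite groups; choosing an infinite-order element in each factor exhibits an undistorted $\Z \times \Z$, again contradicting hyperbolicity.

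For the sufficiency (\emph{if}) direction I would invoke the convex cocompactness machinery of Danciger--Gu\'eritaud--Kassel via Theorem \ref{thm:dgk}. Let $\rho \colon W \to \mathrm{GL}(V)$ be the Tits representation on $V = \R^S$ equipped with the Cosine form $B$, and let $(p, q, r)$ be the signature of $B$. After quotienting by the radical of $B$, $\rho$ descends to a discrete faithful representation into $\mathrm{PO}_{p,q}(\R)$. The plan is to exhibit a properly convex, closed, $W$-invariant subset $\mathcal{C}$ of $\mathbb{H}^{p,q-1}$ with non-empty interior on which $W$ acts cocompactly and such that $\overline{\mathcal{C}} \setminus \mathcal{C}$ contains no non-trivial projective segment. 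A natural candidate for $\mathcal{C}$ is the projectivization of the closed convex hull of the $W$-orbit of a $B$-negative vector. Granted these properties, Theorem \ref{thm:dgk} yields $\mathbb{H}^{p,q-1}$-convex cocompactness of $\rho(W)$ with acausal limit set, and the DGK criterion then forces $W$ to be Gromov-hyperbolic.

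The main obstacle is extracting the two geometric properties of $\mathcal{C}$ from the combinatorial Moussong hypotheses: a non-trivial projective segment in $\overline{\mathcal{C}} \setminus \mathcal{C}$ is the enemy, and the two Moussong conditions are precisely what prevent its two canonical sources. An affine special subgroup of rank $\geq 3$ would produce such a segment along the isotropic line fixed pointwise by its translation subgroup, while an orthogonal non-spherical pair $T_1 \perp T_2$ would produce a segment joining the limit sets of $W_{T_1}$ and $W_{T_2}$, whose convex hulls sit in $B$-orthogonally complementary subspaces. I would therefore stratify the analysis over the nerve $N_W$: for each spherical simplex $T \in N_W$ the local behaviour of $\mathcal{C}$ near the $W_T$-fixed direction is standard because $W_T$ is finite, and the analysis at the non-spherical strata is exactly where the two Moussong hypotheses are brought to bear to rule out the bad degenerations. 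Cocompactness of the $W$-action on $\mathcal{C}$ would then follow from a Vinberg-type fundamental polytope argument once proper convexity and acausality are established.
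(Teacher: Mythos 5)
Your necessity direction is fine and matches the paper's (one-line) argument via $\mathbb{Z}^2$ subgroups, and your overall strategy for sufficiency --- reduce to Theorem \ref{thm:dgk} and then use the Danciger--Gu\'eritaud--Kassel equivalence between convex cocompactness and Anosov/hyperbolicity --- is exactly the paper's strategy. But there is a genuine gap in how you set it up.

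You propose to run Theorem \ref{thm:dgk} on the Tits representation associated to the plain Cosine matrix $\mathsf{C}_W$. This fails whenever the Coxeter diagram has an edge labelled $\infty$: the corresponding $2\times 2$ submatrix of $\mathsf{C}_W$ is $\left(\begin{smallmatrix} 2 & -2 \\ -2 & 2\end{smallmatrix}\right)$, which is of zero type, so $(\mathsf{H}_0)$ is violated even though Moussong's hypotheses only exclude affine subsets of rank $\geqslant 3$ (rank-$2$ affine subsets, i.e.\ $\infty$-edges, are allowed). The paper's key idea, which your proposal is missing, is to replace $\mathsf{C}_W$ by the $\lambda$-Cosine matrix $\mathsf{C}_W^\lambda$ for some $\lambda>0$, where every entry $-2$ coming from an $\infty$-label is deformed to $-2(1+\lambda)$. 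For $\lambda>0$ one checks (via Vinberg's Propositions, i.e.\ Proposition \ref{prop:usefull}) that $\mathsf{C}_W^\lambda$ satisfies $(\mathsf{H}_0)$ if and only if $S$ has no affine subset of rank $\geqslant 3$, and then satisfies $(\mathsf{H}_-)$ if and only if $S$ has no two orthogonal non-spherical subsets; this is the precise dictionary between Moussong's conditions and the hypotheses of Theorem \ref{thm:dgk}, and it is the step your write-up never supplies.

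Relatedly, your final paragraph is aimed at the wrong target: the properties of the invariant convex set $\mathcal{C}$ (proper convexity, cocompactness, absence of segments in $\overline{\mathcal{C}}\setminus\mathcal{C}$) are the \emph{conclusion} of Theorem \ref{thm:dgk}, established there from the purely combinatorial hypotheses $(\mathsf{H}_0)$ and $(\mathsf{H}_-)$ via Lemmas \ref{lem:light} and \ref{lem:cocompact}; you do not need to re-derive them stratum by stratum over the nerve, and your proposed candidate for $\mathcal{C}$ (the convex hull of the orbit of a negative vector) is not the one used there. What remains after the reduction is only the routine observation that one may assume $W$ infinite and irreducible, so that $\mathsf{C}_W^\lambda$ is an irreducible symmetric Cartan matrix of signature $(p,q,r)$ with $p,q\geqslant 1$, and then Theorem \ref{thm:dgk} together with Theorem 1.7 of \cite{DGK} finishes the proof.
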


In order to complete Theorem \ref{thm:specifications}, it remains to show the following:

\begin{proposition}\label{prop:signature}
In the setting of Proposition \ref{prop:nerve_are_sphere}, the signature of $W$ is $(d,2,0)$.
\end{proposition}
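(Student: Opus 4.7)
The plan is to exploit the block structure of $\mathsf{C}_W$ coming from the partition $S = S_1 \sqcup S_2$ built in the proof of Proposition \ref{prop:nerve_are_sphere}. Write $k_i := \sharp S_i$, so the rank of $W$ is $k_1 + k_2 = d+2$. Ordering the basis of $\R^S$ along this partition puts $\mathsf{C}_W$ in the form
\[
\mathsf{C}_W = \begin{pmatrix} A_1 & B \\ B^T & A_2 \end{pmatrix}, \qquad A_i := \mathsf{C}_{W_{S_i}},
\]
where each $A_i$ has signature $(k_i - 1,\, 1,\, 0)$ because $W_{S_i}$ is Lannér. The goal is to show two things: first, that $\mathsf{C}_W$ has \emph{at most} $2$ non-positive eigenvalues, and second, that it has \emph{at least} $2$ negative eigenvalues.

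For the upper bound, I would fix any $s_i \in S_i$ and set $T := S \setminus \{s_1, s_2\}$. The subset $T$ satisfies $S_1 \not\subset T$ and $S_2 \not\subset T$, so the characterization of spherical subsets established in the proof of Proposition \ref{prop:nerve_are_sphere} gives that $W_T$ is spherical, and hence $\mathsf{C}_{W_T}$ is positive definite of rank $d$. Since $\mathsf{C}_{W_T}$ is a $d \times d$ principal submatrix of $\mathsf{C}_W$, Cauchy's interlacing theorem forces $\mathsf{C}_W$ to have at least $d$ positive eigenvalues, equivalently at most $2$ non-positive eigenvalues.

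For the lower bound, I would produce two negative directions using the two Lannér blocks. Perron--Frobenius applied to $2I - A_i$ (a non-negative irreducible matrix, because the Coxeter diagram of $W_{S_i}$ is connected) yields a negative eigenvector $v_i \in \R^{S_i}$ of $A_i$ with strictly positive entries; normalize so that $v_i^T A_i v_i = -1$. Extending $v_1, v_2$ by zero to $\R^S$, the form $\mathsf{C}_W$ restricted to the plane $V := \mathrm{span}(v_1, v_2)$ has Gram matrix
\[
\begin{pmatrix} -1 & \gamma \\ \gamma & -1 \end{pmatrix}, \qquad \gamma := v_1^T B v_2 = -\sum_{s \in S_1,\, t \in S_2} 2 \cos\!\bigl(\tfrac{\pi}{M_{st}}\bigr) (v_1)_s (v_2)_t,
\]
which is negative definite precisely when $|\gamma|<1$. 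Combined with the upper bound from the previous paragraph, this would force the signature of $\mathsf{C}_W$ to be exactly $(d,\, 2,\, 0)$.

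The hard part will be verifying $|\gamma|<1$, which is not a formal consequence of the setup: it depends on the precise labels of the Coxeter diagram and on the numerical Perron--Frobenius data of the two Lannér blocks. I would close the argument by a direct case-by-case computation across the finitely many Coxeter groups in Tables \ref{examples_dim4}, \ref{examples_dim5}, \ref{examples_dim6}, \ref{examples_dim7}, \ref{examples_dim8}, computing $v_1, v_2$ from $A_1, A_2$ and evaluating the explicit sum for $\gamma$. As a structural alternative one may consider the continuous deformation $\mathsf{C}_W(t) := \bigl(\begin{smallmatrix} A_1 & tB \\ tB^T & A_2 \end{smallmatrix}\bigr)$ for $t \in [0,1]$: because $\mathsf{C}_W(t)|_T$ is a convex combination of two positive definite matrices it remains positive definite throughout, so the upper bound of $2$ on the non-positive index persists for all $t$, and it then suffices to verify $\det \mathsf{C}_W(t) \neq 0$ for $t \in [0,1]$ in order to transport the signature $(d, 2, 0)$ from $t = 0$ (where the matrix is block-diagonal with the signatures of $A_1$ and $A_2$) to $t = 1$.
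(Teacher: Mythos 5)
Your structural reduction is sound and coincides with the first half of the paper's own argument: the spherical principal submatrix $\mathsf{C}_{W_T}$ of rank $d$ forces, by interlacing, at least $d$ positive eigenvalues, and the Lannér blocks force at least one negative eigenvalue, so the only possible signatures are $(d,2,0)$, $(d,1,1)$ and $(d+1,1,0)$. At that point the cheapest way to finish --- and the one the paper takes --- is to note that these three cases are distinguished by the sign of $\det(\mathsf{C}_W)$, so everything reduces to proving $\det(\mathsf{C}_W)>0$. Your criterion $|\gamma|<1$ is only a \emph{sufficient} condition: two vectors of negative norm can perfectly well span a plane of signature $(1,1)$, so negative definiteness of the particular plane $\mathrm{span}(v_1,v_2)$ is strictly stronger than $\det(\mathsf{C}_W)>0$ and is not forced by the truth of the proposition. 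You would thus be verifying more than is needed, with the risk that the check fails for some table entry even though the statement holds.

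The genuine gap is in the closing step. You propose "a direct case-by-case computation across the finitely many Coxeter groups" in the tables, but Tables \ref{examples_dim4}, \ref{examples_dim5} and \ref{examples_dim6} consist largely of \emph{infinite} one- and two-parameter families $(W_p)_p$ and $(W_{p,q})_{p,q}$ (e.g.\ $p\geqslant 11$, or $p\geqslant 7$ and $q\geqslant 3$), so no finite enumeration of values of $\gamma$ --- nor of $\det\mathsf{C}_W(t)$ along your deformation, which would moreover require controlling a whole interval of $t$ for each group --- can terminate the proof. Some monotonicity or limiting statement in the parameters is indispensable; this is precisely the content of the paper's Lemma \ref{lem:computation}, which uses Vinberg's determinant identities (Propositions 12 and 13 of \cite{MR774946}) to show that $p\mapsto\det(\mathsf{C}_{W_p})$ is increasing with positive limit, reducing the positivity of $\det(\mathsf{C}_W)$ to a check at the finitely many minimal parameters. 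Your proposal contains no analogue of this step, and without it the argument does not close.
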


\begin{proof}
As seen in the proof of Proposition \ref{prop:nerve_are_sphere}, if $S_1$ (resp. $S_2$) denotes the set of all white (resp. black) nodes of $\mathsf{G}_W$, then the special subgroups $W_{S_1}$ and $W_{S_2}$ are Lannér, and for each $(s,t) \in S_1 \times S_2$, the special subgroup $W_{S \setminus \{s,t\}}$ is spherical. Thus the Coxeter group $W$ contains a spherical special subgroup of rank $d$ and a Lannér special subgroup. For example, if the Coxeter diagram $\mathsf{G}_W$ of $W$ is the diagram (A) in Figure \ref{fig:example}, then the diagram (B) corresponds to a spherical Coxeter group $B_3 \times I_2(p)$ (see Table \ref{spheri_diag}) and the diagram (C) corresponds to a Lannér Coxeter group in Table \ref{table:Lanner}.

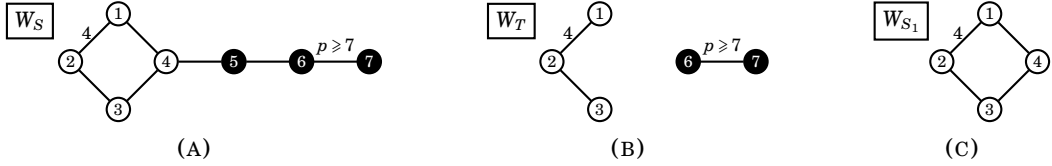
\begin{figure}[ht!]
\subfloat[]{
\begin{tikzpicture}[thick,scale=0.6, every node/.style={transform shape}]
\node[draw, inner sep=5pt] (1) at (-0.9,0.9) {\large $W_S$};
\node[draw,circle, inner sep=2pt, minimum size=2pt] (1) at (0,0) {{\small 2}};
\node[draw,circle, inner sep=2pt, minimum size=2pt] (2) at (1.06,1.06) {{\small 1}};
\node[draw,circle, inner sep=2pt, minimum size=2pt] (3) at (1.06,-1.06) {{\small 3}};
\node[draw,circle, inner sep=2pt, minimum size=2pt] (4) at (2.12,0) {{\small 4}};

\node[draw,circle,fill=black, inner sep=2pt, minimum size=2pt] (5) at (3.62,0) {{\small \textcolor{white}{5}}};
\node[draw,circle,fill=black, inner sep=2pt, minimum size=2pt] (6) at (5.12,0) {{\small \textcolor{white}{6}}};
\node[draw,circle,fill=black, inner sep=2pt, minimum size=2pt] (7) at (6.62,0) {{\small \textcolor{white}{7}}};

\draw (1) -- (3) node[below,near start] {};
\draw (2) -- (1) node[above,near end] {$4$};
\draw (2) -- (4);
\draw (3)--(4);
\draw (4) -- (5) node[above,near start] {};
\draw (5) -- (6) node[right,midway] {};
\draw (6) -- (7) node[above,midway] {$p \geqslant 7$};
\end{tikzpicture}
}
\quad\quad\quad
\subfloat[]{
\begin{tikzpicture}[thick,scale=0.6, every node/.style={transform shape}]
\node[draw, inner sep=5pt] (1) at (-0.9,0.9) {\large $W_T$};
\node[draw,circle, inner sep=2pt, minimum size=2pt] (1) at (0,0) {{\small 2}};
\node[draw,circle, inner sep=2pt, minimum size=2pt] (2) at (1.06,1.06) {{\small 1}};
\node[draw,circle, inner sep=2pt, minimum size=2pt] (3) at (1.06,-1.06) {{\small 3}};

\node[draw,circle,fill=black, inner sep=2pt, minimum size=2pt] (6) at (3.02,0) {{\small \textcolor{white}{6}}};
\node[draw,circle,fill=black, inner sep=2pt, minimum size=2pt] (7) at (4.52,0) {{\small \textcolor{white}{7}}};

\draw (1) -- (3) node[below,near start] {};
\draw (2) -- (1) node[above,near end] {$4$};

\draw (6) -- (7) node[above,midway] {$p \geqslant 7$};
\end{tikzpicture}
}
\quad\quad\quad
\subfloat[]{
\begin{tikzpicture}[thick,scale=0.6, every node/.style={transform shape}]
\node[draw, inner sep=5pt] (1) at (-0.9,0.9) {\large $W_{S_1}$};
\node[draw,circle, inner sep=2pt, minimum size=2pt] (1) at (0,0) {{\small 2}};
\node[draw,circle, inner sep=2pt, minimum size=2pt] (2) at (1.06,1.06) {{\small 1}};
\node[draw,circle, inner sep=2pt, minimum size=2pt] (3) at (1.06,-1.06) {{\small 3}};
\node[draw,circle, inner sep=2pt, minimum size=2pt] (4) at (2.12,0) {{\small 4}};

\draw (1) -- (3) node[below,near start] {};
\draw (2) -- (1) node[above,near end] {$4$};
\draw (2) -- (4);
\draw (3)--(4);

\end{tikzpicture}
}
\caption{A Coxeter group $W_{S}$ in Table \ref{examples_dim5} and two special subgroups $W_{T}$ and $W_{S_1}$ of $W_{S}$ with $S = \{ 1, 2, 3, 4\} \cup \{5, 6, 7\}$, $T = S \setminus \{ 4, 5\}$ and $S_1 = \{ 1, 2, 3, 4\}$}\label{fig:example} 
\end{figure}

Since the signature of any spherical (resp. Lannér) Coxeter group of rank $r$ is $(r,0,0)$ (resp. $(r-1,1,0)$), the signature $s_W$ of $W$ can only be $(d,2,0)$, $(d,1,1)$ or $(d+1,1,0)$, and so $s_W$ is determined by the sign of the determinant $\det(\mathsf{C}_W)$ of $\mathsf{C}_W$. More precisely, the signature $s_W$ is $(d,2,0)$, $(d,1,1)$ and $(d+1,1,0)$ if $\det(\mathsf{C}_W) > 0$, $ = 0$ and $< 0$, respectively.

\medskip

Now we check carefully the sign of $\det(\mathsf{C}_W)$ in the decreasing order of dimension $d$.

\medskip
 
In the case when $d = 8$ or $7$ (see Tables \ref{examples_dim8} or \ref{examples_dim7}), a simple computation shows that $\det(\mathsf{C}_W)$ is positive. For example, if the Coxeter diagram $\mathsf{G}_W$ of $W$ is:

\begin{center}
\begin{tikzpicture}[thick,scale=0.6, every node/.style={transform shape}]
\node[draw,circle, inner sep=2pt, minimum size=2pt] (1) at (-5,0){{\small 1}};
\node[draw,circle, inner sep=2pt, minimum size=2pt] (2) at (-3.5,0){{\small 2}};
\node[draw,circle, inner sep=2pt, minimum size=2pt] (3) at (-2,0){{\small 3}};
\node[draw,circle, inner sep=2pt, minimum size=2pt] (4) at (-0.5,0){{\small 4}};
\node[draw,circle, inner sep=2pt, minimum size=2pt] (5) at (1,0) {{\small 5}};

\node[draw,circle,fill=black, inner sep=2pt, minimum size=2pt] (4a) at (2.5,0) {{\small \textcolor{white}{6}}};
\node[draw,circle,fill=black, inner sep=2pt, minimum size=2pt] (5a) at (4,0) {{\small \textcolor{white}{7}}};
\node[draw,circle,fill=black, inner sep=2pt, minimum size=2pt] (6a) at (5.5,0) {{\small \textcolor{white}{8}}};
\node[draw,circle,fill=black, inner sep=2pt, minimum size=2pt] (7a) at (7,0) {{\small \textcolor{white}{9}}};
\node[draw,circle,fill=black, inner sep=0.5pt, minimum size=2pt] (8a) at (8.5,0) {{\small \textcolor{white}{10}}};

\draw (1)--(2) node[above,midway] {$5$};
\draw (2) -- (3);
\draw (3) -- (4);
\draw (4) -- (5);
\draw (4a) -- (5);
\draw (4a) -- (5a);
\draw (5a) -- (6a) node[right,midway] {};
\draw (6a) -- (7a) ;
\draw (7a) -- (8a) node[above,midway] {$5$};
\end{tikzpicture}
\end{center}
then the Cosine matrix $\mathsf{C}_W$ is:
{\footnotesize
\begin{displaymath}
\begin{pmatrix}
2 & -c_5 & 0 & 0 & 0 & 0 & 0 & 0 & 0 & 0 \\
-c_5 & 2 & -1 & 0 & 0 & 0 & 0 & 0 & 0 & 0 \\
0 & -1 & 2 & -1 & 0 & 0 & 0 & 0 & 0 & 0 \\
0 & 0 & -1 & 2 & -1 & 0 & 0 & 0 & 0 & 0 \\
0 & 0 & 0 & -1 & 2 & -1 & 0 & 0 & 0 & 0 \\
0 & 0 & 0 & 0 & -1 & 2 & -1 & 0 & 0 & 0 \\
0 & 0 & 0 & 0 & 0 & -1 & 2 & -1 & 0 & 0 \\
0 & 0 & 0 & 0 & 0 & 0 & -1 & 2 & -1 & 0 \\
0 & 0 & 0 & 0 & 0 & 0 & 0 & -1 & 2 & -c_5 \\
0 & 0 & 0 & 0 & 0 & 0 & 0 & 0 & -c_5 & 2 \\
\end{pmatrix}
\end{displaymath}
}
where $c_5 = 2 \cos( \frac{\pi}{5} )$, and the determinant of $\mathsf{C}_W$ is $\frac{1}{2}(25-11\sqrt{5}) \approx 0.201626$.

\medskip

In the case when $d = 6, 5, 4$ (see Tables \ref{examples_dim6}, \ref{examples_dim5}, \ref{examples_dim4}),  there is a one-parameter family $(W_p)_p$ or a two-parameter family $(W_{p,q})_{p,q}$ of Coxeter groups for each item of the Tables. The following Lemma \ref{lem:computation} shows that the determinant of the Cosine matrix increases when the parameter increases. Here, the (partial) order on the set of two parameters $(p,q)$ is given by:
$$
(p',q') \geqslant (p'',q'') \quad \Leftrightarrow \quad p' \geqslant p'' \quad \textrm{and} \quad q' \geqslant q''
$$

By an easy but long computation, we have that $\det(\mathsf{C}_{W_p})$ (resp. $\det(\mathsf{C}_{W_{p,q}})$) is positive for every minimal element $p$ (resp. $(p,q)$) in the set of parameters. For example, if the Coxeter diagram $\mathsf{G}_{W_p}$ of $W_p$ is:
\begin{center}
\begin{tikzpicture}[thick,scale=0.6, every node/.style={transform shape}]
\node[draw,circle,fill=black] (4) at (0,0) {};
\node[draw,circle] (3) at (-1,-1) {};
\node[draw,circle] (2) at (-1,1) {};
\node[draw,circle] (1) at (-2,0) {};

\node[draw,circle,fill=black] (5)  at (1.5,0)  {};
\node[draw,circle,fill=black] (6)  at (3,0)  {};
\draw (1) -- (3) node[below,near start] {$5$};
\draw (2) -- (1) node[above,near end] {$5$};

\draw (2) -- (4);
\draw (3)--(1);
\draw (4) -- (5) node[above,near start] {};
\draw (5) -- (6) node[above,midway] {$p \geqslant 11$};
\draw (3) -- (4) node[above,midway] {};
\end{tikzpicture}
\end{center}
then the determinant of $\mathsf{C}_{W_p}$ is equal to
 $-4 (3 + \sqrt{5}) + 8 (1 +\sqrt{5}) \cos\left( \frac{2 \pi}{p} \right) \approx 0.834557$ for $p = 11$. Remark that if $p=10$, then $\det(\mathsf{C}_{W_p}) = 0$ and so $W_p$ is the geometric reflection group of a compact hyperbolic $4$-polytope with $6$ facets (see Esselmann \cite{MR1396674}).
\end{proof}

\begin{lemma}\label{lem:computation}
If $(W_p)_{p}$ is a one-parameter family of Coxeter groups in Tables \ref{examples_dim4}, \ref{examples_dim5}, \ref{examples_dim6}, then $( \det(\mathsf{C}_{W_p}) )_p$ is an increasing sequence and its limit is a positive number. Similarly, if $(W_{p,q})_{p,q}$ is a two-parameter family of Coxeter groups in Table \ref{examples_dim4}, then $( \det(\mathsf{C}_{W_{p,q}}) )_{p,q}$ is also an increasing sequence and its limit is a positive number as $p$ and $q$ go to infinity.
\end{lemma}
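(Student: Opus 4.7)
The plan is to view $\det(\mathsf{C}_{W_p})$ as an explicit elementary function of the parameter and argue monotonicity by inspection. In each Coxeter diagram $\mathsf{G}_{W_p}$ listed in Tables \ref{examples_dim4}, \ref{examples_dim5}, \ref{examples_dim6}, the parameter $p$ labels exactly one edge $\overline{st}$, so the Cosine matrix $\mathsf{C}_{W_p}$ differs from a fixed symmetric matrix $A$ only in the two symmetric entries $(\mathsf{C}_{W_p})_{st} = (\mathsf{C}_{W_p})_{ts} = -2\cos(\pi/p)$. Writing $x := \cos(\pi/p)$ and applying two cofactor expansions, one obtains the closed form
\begin{equation*}
\det(\mathsf{C}_{W_p}) = \alpha + \beta\, x - 4\gamma\, x^{2},
\end{equation*}
where $\alpha, \beta \in \R$ are constants determined by $A$ and $\gamma = \det(A_{\widehat{st}})$ is the determinant of the principal submatrix of $A$ obtained by deleting rows and columns $s$ and $t$. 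The coefficients $\alpha, \beta, \gamma$ lie in a small algebraic extension of $\Q$ determined by the remaining edge labels of $\mathsf{G}_{W_p}$ (typically involving $\sqrt{2}$, $\sqrt{3}$ or $\sqrt{5}$).

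First I would compute $\alpha, \beta, \gamma$ in closed form for each family; this is routine because the diagrams have at most eight nodes and the fixed portion decomposes into small subdiagrams on which cofactor expansion terminates quickly. Second, monotonicity of $p \mapsto \det(\mathsf{C}_{W_p})$ reduces to monotonicity of the polynomial $f(x) = \alpha + \beta\, x - 4\gamma\, x^{2}$ on the interval $[\cos(\pi/p_0), 1]$, where $p_0$ is the smallest admissible value of $p$ in the corresponding table; since $p \mapsto \cos(\pi/p)$ is strictly increasing on $\{p_0, p_0+1,\dotsc\}$ with limit $1$, it suffices to check that $f'(x) = \beta - 8\gamma\, x \geqslant 0$ on the relevant interval. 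In the worked example of the proof of Proposition \ref{prop:signature}, $f(x)$ reduces to $\mathrm{(constant)} + \mathrm{(positive\ constant)} \cdot \cos(2\pi/p)$, which is manifestly increasing in $p$; the analogous positivity is to be verified family by family. Third, passing to the limit $p \to \infty$ gives $\lim_{p \to \infty} \det(\mathsf{C}_{W_p}) = \alpha + \beta - 4\gamma$, an explicit algebraic number whose positivity is to be checked in each case.

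For a two-parameter family $(W_{p,q})_{p,q}$ in Table \ref{examples_dim4}, inspection of the tables shows that the two parametrized edges share no common vertex, so applying the same cofactor expansion to both edges simultaneously yields
\begin{equation*}
\det(\mathsf{C}_{W_{p,q}}) = P(\cos(\pi/p),\, \cos(\pi/q))
\end{equation*}
for some polynomial $P(x,y)$ that is separately quadratic in $x$ and in $y$. Monotonicity in each variable independently — which suffices to conclude that $(\det(\mathsf{C}_{W_{p,q}}))_{p,q}$ is increasing in the product order — reduces as before to the nonnegativity of $\partial_x P$ and $\partial_y P$ on the admissible range, and the limit as $(p,q) \to (\infty, \infty)$ is $P(1,1)$, to be checked positive. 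The main — and essentially only — obstacle is thus the tedious but mechanical case-by-case verification over the finitely many families appearing in the tables; no conceptual ingredient is needed beyond the cofactor expansion and elementary calculus.
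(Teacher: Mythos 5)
Your strategy is correct and would prove the lemma, but it is not the route the paper takes, and the difference is worth noting. You expand $\det(\mathsf{C}_{W_p})$ directly as a quadratic $\alpha+\beta x-4\gamma x^{2}$ in $x=\cos(\pi/p)$ (your sign and the identification $\gamma=\det(A_{\widehat{st}})$ are both right, as is the observation that the two parametrized edges in the two-parameter families are vertex-disjoint, so the bivariate polynomial is separately quadratic), and you then check $f'\geqslant 0$ and $f(1)>0$ family by family. The paper instead exploits the join structure $S=S_1\sqcup S_2$ into two Lann\'er pieces and invokes Vinberg's determinant identities (Propositions 12 and 13 of \cite{MR774946}) for the one or two connecting edges, yielding for a unique connecting edge $\overline{st}$, $s\in S_1$, $t\in S_2$,
$$
\det(\mathsf{C}_{W_S}) = \det(\mathsf{C}_{W_{S_1}})\det(\mathsf{C}_{W_{S_2}}) - 4\det\bigl(\mathsf{C}_{W_{S_1\smallsetminus\{s\}}}\bigr)\det\bigl(\mathsf{C}_{W_{S_2\smallsetminus\{t\}}}\bigr)\cos^{2}\Bigl(\tfrac{\pi}{M_{st}}\Bigr),
$$
and an analogous identity in the two-connecting-edge case. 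Since the parameters sit inside $S_1$ and $S_2$, monotonicity and the sign of the limit then follow from uniform qualitative facts: Lann\'er determinants are negative with $-\det(\mathsf{C}_{W_{S_2}})$ increasing in $p$, and the spherical determinants $\det(\mathsf{C}_{W_{S_2\smallsetminus\{t\}}})$ are positive and decrease to zero. What the paper's factorization buys is that the case analysis reduces to rank-$2$ and rank-$3$ subdiagrams (essentially $\det \mathsf{C}_{I_2(p)}=4\sin^2(\pi/p)$ and a couple of small Lann\'er determinants), whereas your approach requires symbolically computing and then bounding the derivative of a full $(d+2)\times(d+2)$ determinant, up to $8\times 8$ for $d=6$, separately for each of the several dozen families in the tables; what your approach buys is that it needs no external input beyond cofactor expansion. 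Both proofs ultimately rest on a finite mechanical verification, so I see no gap, only a substantially heavier computational load in yours.
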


\begin{proof}
We denote $W_p$ or $W_{p,q}$ by $W_{S,M}$ (simply $W_S$). Let $S_1$ (resp. $S_2$) be the set of all white (resp. black) nodes of the Coxeter diagram $\mathsf{G}_{W_S}$. 

\medskip

\textit{Assume first that there is a unique edge $\overline{st}$ between $S_1$ and $S_2$ in $\mathsf{G}_{W_S}$ such that $s \in S_1$ and $t \in S_2$}.\footnote{In practice, it means that $\mathsf{G}_{W_S}$ does not belong to the last row of Table \ref{examples_dim4}.}  
By Proposition 13 of Vinberg \cite{MR774946}, we have:
$$
\frac{\det(\mathsf{C}_{W_S})}{\det \left( \mathsf{C}_{W_{S \setminus \{s, t\}  }} \right)} = 
\frac{\det(\mathsf{C}_{W_{S_1}})}{\det \left(\mathsf{C}_{W_{S_1 \setminus \{s\}  }}\right) } \cdot
\frac{\det(\mathsf{C}_{W_{S_2}})}{\det \left(\mathsf{C}_{W_{S_2 \setminus \{t\}  }}\right)}  - 4 \cos^2 \left( \frac{\pi}{M_{st}} \right) 
$$
which is equivalent to:
$$
\det(\mathsf{C}_{W_S}) = \det(\mathsf{C}_{W_{S_1}}) \det(\mathsf{C}_{W_{S_2}}) - 4 \det \left( \mathsf{C}_{W_{S_1 \smallsetminus \{s\} }} \right)  \det \left( \mathsf{C}_{W_{S_2 \smallsetminus \{t\}}} \right)  \cos^2 \left( \frac{\pi}{M_{st}} \right) 
$$
by the following observation: for any two orthogonal subsets $T \perp U$ of $S$ (such as $S_1 \setminus \{s\}$ and $S_2 \setminus \{t\}$ here), 
  $$\det \left( \mathsf{C}_{W_{T \cup U}} \right) = \det \left(\mathsf{C}_{W_{T}}\right) \det \left(\mathsf{C}_{W_{U}}\right).$$ Moreover, it is easy to see the following:
\begin{itemize}
\item the sequence $p \mapsto  -\det(\mathsf{C}_{W_{S_2}})$ is positive and increasing to a positive number;
\item the sequence $p \mapsto \det \big( \mathsf{C}_{W_{S_2 \smallsetminus \{t\}}} \big)$ is positive and decreasing to zero;
\item in the case $W_S = W_{p}$, the numbers $-\det(\mathsf{C}_{W_{S_1}})$ and $ \det \big( \mathsf{C}_{W_{S_1 \smallsetminus \{s\}}} \big)$ are positive; and in the case $W_S = W_{p,q}$, the sequence $q \mapsto  -\det(\mathsf{C}_{W_{S_1}})$ is positive and increasing to a positive number, and the sequence $q \mapsto \det \big( \mathsf{C}_{W_{S_1 \smallsetminus \{s\}}} \big)$ is positive and decreasing to zero.
\end{itemize}
Hence, the sequence $p \mapsto \det(\mathsf{C}_{W_S})$ (or $(p,q) \mapsto \det(\mathsf{C}_{W_S})$) is increasing and converges to a positive number. 

\medskip

\textit{Now assume that there are two edges $\overline{rt}$ and $\overline{st}$ between $S_1$ and $S_2$ in $\mathsf{G}_{W_S}$ such that $r, s \in S_1 $ and $t \in S_2$}.\footnote{In other words, $\mathsf{G}_{W_S}$ belongs to the last row of Table \ref{examples_dim4}.} By Proposition 12 of Vinberg \cite{MR774946}, we have:
$$
\frac{\det(\mathsf{C}_{W_S})}{\det \left( \mathsf{C}_{W_{S \setminus \{t\}  }} \right)} - 2 = \left( \frac{\det \left(\mathsf{C}_{W_{ S_1 \cup \{t\} }} \right)}{\det \left( \mathsf{C}_{W_{S_1  }} \right)} - 2  \right) +  \left( \frac{\det(\mathsf{C}_{W_{S_2}})}{\det \left( \mathsf{C}_{W_{S_2 \setminus \{t\}  }} \right)} - 2  \right) 
$$
which is equivalent to
$$
\det \left( \mathsf{C}_{W_S} \right) =  \det(\mathsf{C}_{W_{S_1}}) \det(\mathsf{C}_{W_{S_2}}) 
+
\det \left( \mathsf{C}_{W_{S_1}} \right) \det \left( \mathsf{C}_{W_{S_2 \setminus \{t\} }} \right)  \left( \frac{\det \left( \mathsf{C}_{W_{S_1 \cup \{t\}}} \right)}{\det \left( \mathsf{C}_{W_{S_1}} \right)} - 2 \right) 
$$
by the above observation. Moreover, it is easy to see (cf. Table 2 of Esselmann \cite{MR1396674}) that
\begin{equation}\label{eq:two_values}
\frac{\det \left( \mathsf{C}_{W_{S_1 \cup \{t\}}} \right)}{\det \left( \mathsf{C}_{W_{S_1}} \right)} =
\begin{cases}
 \frac{5+2 \sqrt{5} + 3 \sqrt{2} + \sqrt{10}}{2} & \text{if } \mathsf{G}_{W_{S_1 \cup \{t\}}} \text{ is the left diagram in Figure \ref{Fig:Possible}}; \\
 3 + \sqrt{5}& \text{if } \mathsf{G}_{W_{S_1 \cup \{t\}}} \text{ is the right diagram in Figure \ref{Fig:Possible}.}
\end{cases}
\end{equation}

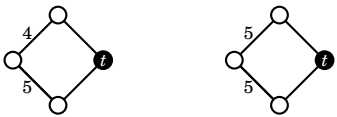
\begin{figure}[!ht]
\centering
\begin{tabular}{ccc}
\begin{tikzpicture}[thick,scale=0.6, every node/.style={transform shape}]
\node[draw,circle,fill=black,  inner sep=1pt, minimum size=1pt] (4) at (0,0) {\textcolor{white}{$t$} };
\node[draw,circle] (3) at (-1,-1) {};
\node[draw,circle] (2) at (-1,1) {};
\node[draw,circle] (1) at (-2,0) {};

\draw (1) -- (3) node[below,near start] {$5$};
\draw (2) -- (1) node[above,near end] {$4$};

\draw (2) -- (4);
\draw (3)--(1);
\draw (3) -- (4) node[above,midway] {};
\end{tikzpicture}
& \quad \quad \quad &
\begin{tikzpicture}[thick,scale=0.6, every node/.style={transform shape}]
\node[draw,circle,fill=black,  inner sep=1pt, minimum size=1pt] (4) at (0,0) {\textcolor{white}{$t$} };
\node[draw,circle] (3) at (-1,-1) {};
\node[draw,circle] (2) at (-1,1) {};
\node[draw,circle] (1) at (-2,0) {};

\draw (1) -- (3) node[below,near start] {$5$};
\draw (2) -- (1) node[above,near end] {$5$};

\draw (2) -- (4);
\draw (3)--(1);
\draw (3) -- (4) node[above,midway] {};
\end{tikzpicture}
\end{tabular}
\caption{Two possible Coxeter diagrams of $W_{S_1 \cup \{t\}}$ }
\label{Fig:Possible}
\end{figure}
Hence, the sequence $p \mapsto \det(\mathsf{C}_{W_S})$ is increasing and converges to a positive number, because the both values in Equation (\ref{eq:two_values}) are greater than $2$.
\end{proof}

Similarly, by an easy computation, we can prove:

\begin{proposition}\label{prop:signatureH}
Let $(W,S)$ be a Coxeter system with Coxeter diagram $\mathsf{G}_W$ in Table \ref{Hexamples_dim4}. Then the signature of $W_S$ is $(5,1,0)$.
\end{proposition}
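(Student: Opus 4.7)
The plan is to follow the strategy of Proposition \ref{prop:signature} almost verbatim, with the single change that we now want $\det(\mathsf{C}_W) < 0$ rather than $> 0$. Since Proposition \ref{prop:nerve_are_sphere} already covers Table \ref{Hexamples_dim4}, we may partition $S = S_1 \sqcup S_2$ into white and black nodes with both $W_{S_1}$ and $W_{S_2}$ Lannér, and for every $(s,t) \in S_1 \times S_2$ the subgroup $W_{S \setminus \{s,t\}}$ spherical. In particular $W$ contains a spherical special subgroup of rank $d = 4$ and a Lannér special subgroup, so the Sylvester-type inertia argument used at the start of the proof of Proposition \ref{prop:signature} restricts the signature $s_W$ to one of $(4,2,0)$, $(4,1,1)$, $(5,1,0)$, determined by the sign of $\det(\mathsf{C}_W)$. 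Thus it suffices to prove $\det(\mathsf{C}_W) < 0$ for each $W$ in Table \ref{Hexamples_dim4}, since $(5,1,0) = (d+1,1,0)$ is precisely the negative-determinant case.

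The computation of $\det(\mathsf{C}_W)$ would reuse, verbatim, the machinery of Lemma \ref{lem:computation}: Vinberg's Proposition 13 of \cite{MR774946} when $\mathsf{G}_W$ has a single edge joining $S_1$ to $S_2$, and his Proposition 12 of \cite{MR774946} when the connecting edges share a common vertex of valence $\geqslant 2$. Either formula reduces $\det(\mathsf{C}_W)$ to a combination of the determinants associated with the Lannér sides $W_{S_1}$, $W_{S_2}$ and their point-deletions, plus a cosine correction coming from the bipartite edges. For diagrams without parameters I would just evaluate and confirm negativity numerically.

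For one- or two-parameter families in Table \ref{Hexamples_dim4} I would then rerun the same four-sequence monotonicity bookkeeping as in Lemma \ref{lem:computation}: on each Lannér side the sequences $p \mapsto \det(\mathsf{C}_{W_{S_i}})$ and $p \mapsto \det(\mathsf{C}_{W_{S_i \setminus \{s\}}})$ are monotone in the parameter, with known sign and limit, and combining them via Vinberg's formula makes $p \mapsto \det(\mathsf{C}_{W_p})$ itself monotone. Negativity for the whole family then reduces to a finite check at the appropriate endpoint of the parameter range.

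The main, and essentially only, obstacle is bookkeeping rather than conceptual: first, one must track the direction of monotonicity carefully, because here we want $\det(\mathsf{C}_{W_p})$ to remain \emph{negative} throughout the range (in contrast to Lemma \ref{lem:computation}, where the quantity was increasing to a positive limit), so the critical endpoint to check is the opposite one; second, one must select the correct Vinberg reduction formula according to the shape of the bipartite part of $\mathsf{G}_W$ and, if the second case occurs, verify by the table analogous to Esselmann \cite[Table 2]{MR1396674} which of the diagrams of Figure \ref{Fig:Possible}-type appears as $\mathsf{G}_{W_{S_1 \cup \{t\}}}$. Beyond that the proof is the same ``easy but long computation'' acknowledged at the end of the proof of Proposition \ref{prop:signature}.
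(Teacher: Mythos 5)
Your proposal is correct and matches the paper's approach: the paper's own proof is literally ``by an easy computation'' in the style of Proposition \ref{prop:signature}, i.e.\ the same inertia argument (a spherical special subgroup of rank $d=4$ plus a Lann\'er special subgroup pins the signature to $(4,2,0)$, $(4,1,1)$ or $(5,1,0)$) followed by a determinant sign check, and you correctly identify that the target is now $\det(\mathsf{C}_W)<0$ and that the relevant endpoint for a monotone family is the top of the parameter range. The only simplification you are missing is that every parameter range in Table \ref{Hexamples_dim4} is finite, so the monotonicity bookkeeping of Lemma \ref{lem:computation} is optional: a direct evaluation of the finitely many $6 \times 6$ determinants already suffices.
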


\section{Tits representations and Convex cocompactness}

\subsection{Cartan matrices and Coxeter groups}

An $n \times n$ matrix $A = (A_{ij})_{i, j = 1, \dotsc, n}$ is called a \emph{Cartan matrix} if the following hold:
\begin{itemize}
\item for all $i = 1, \dotsc, n$, $A_{ii} = 2$, and for all $i \neq j$, \,$A_{ij} \leqslant 0$, and $A_{ij} =0 \Leftrightarrow A_{ji}=0$;
\item for all $i \neq j$,\, $A_{ij} A_{ji} \geqslant 4$ or $A_{ij} A_{ji}  = 4 \cos^{2} \left( \frac{\pi}{m_{ij}} \right)$ with some $m_{ij} \in \mathbb{N} \smallsetminus \{0, 1\} $.
\end{itemize}

A Cartan matrix $A$ is \emph{reducible} if (after a simultaneous permutation of rows and columns) $A$ is the direct sum of smaller matrices. Otherwise, $A$ is \emph{irreducible}. It is obvious that every Cartan matrix $A$ is the direct sum of irreducible matrices $A_1 \bigoplus \dotsm \bigoplus A_k$. Each irreducible matrix $A_i$, $i = 1, \dotsc, k$, is called a \emph{component} of $A$. If $x = (x_1, \dotsc, x_n)$ and $y = (y_1, \dotsc, y_n) \in \mathbb{R}^n$, we write $x > y$ if $x_i > y_i$ for every $i$, and $x \geqslant y$ if $x_i \geqslant y_i$ for every $i$.

\begin{proposition}[Theorem 3 of Vinberg \cite{bible}]\label{prop:cartan_type}
If $A$ is an irreducible Cartan matrix of size $n \times n$, then exactly one of the following holds:
\begin{itemize}
\item[$\mathrm{(P)}$] The matrix $A$ is nonsingular; for every vector $x \in \R^n$, if $Ax \geqslant 0$, then $x>0$ or $x=0$.

\item[$\mathrm{(Z)}$] The rank of $A$ is $n-1$; there exists a vector $y \in \R^n$ such that $y >0$ and $Ay =0$; for every vector $x \in \R^n$, if $Ax \geqslant 0$, then $Ax=0$.

\item[$\mathrm{(N)}$] There exists a vector $y \in \R^n$ such that $y >0$ and $Ay <0$; for every vector $x \in \R^n$, if $Ax \geqslant 0$ and $x \geqslant 0$, then $x=0$.
\end{itemize}
We say that $A$ is of \emph{positive, zero} or \emph{negative type} if $\mathrm{(P)}$, $\mathrm{(Z)}$ or $\mathrm{(N)}$ holds, respectively. 
\end{proposition}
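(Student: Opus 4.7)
The plan is to reduce to the Perron--Frobenius theorem applied to the non-negative matrix $B := 2I - A$. Since $A_{ii} = 2$ and $A_{ij} \leqslant 0$ for $i \neq j$, the matrix $B$ is entrywise non-negative. The condition $A_{ij} = 0 \Leftrightarrow A_{ji} = 0$ ensures that the support of $B$ is symmetric, so irreducibility of $A$ in the sense of the definition is equivalent to $B$ being an irreducible non-negative matrix, i.e.\ its associated directed graph is strongly connected. I would then invoke the classical Perron--Frobenius theorem for irreducible non-negative matrices: there is a simple real eigenvalue $\lambda := \rho(B) \geqslant 0$, a strictly positive right eigenvector $v$ and a strictly positive left eigenvector $w$ for $\lambda$, with every other eigenvalue $\mu$ of $B$ satisfying $|\mu| \leqslant \lambda$. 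Setting $\mu_0 := 2 - \lambda$, we obtain $Av = \mu_0 v$ and $w^\top A = \mu_0\, w^\top$, and the trichotomy (P)/(Z)/(N) is dictated by the sign of $\mu_0$.

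In case $\mu_0 > 0$ (so $\lambda < 2$), the Neumann series $A^{-1} = \tfrac{1}{2}\sum_{k \geqslant 0}(B/2)^k$ converges, and irreducibility of $B$ implies that for each pair $(i,j)$ some power of $B$ has a positive $(i,j)$-entry, whence $A^{-1}$ is \emph{entrywise strictly positive}. The implication ``$Ax \geqslant 0 \Rightarrow x > 0$ or $x = 0$'' then follows at once by writing $x = A^{-1}(Ax)$. In case $\mu_0 = 0$, simplicity of $\lambda$ yields $\mathrm{rank}(A) = n - 1$, the vector $v > 0$ serves as the required $y$, and for any $x$ with $Ax \geqslant 0$ the relation $w^\top A = 0$ gives $0 = w^\top(Ax)$ with $w > 0$, forcing $Ax = 0$. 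In case $\mu_0 < 0$, the equation $Av = \mu_0 v < 0$ with $v > 0$ gives the desired $y = v$; and if $Ax \geqslant 0$ with $x \geqslant 0$, then $0 \leqslant w^\top(Ax) = \mu_0\, w^\top x \leqslant 0$, which forces $w^\top x = 0$ and hence $x = 0$.

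The main obstacle I anticipate is ensuring that the version of Perron--Frobenius invoked is strong enough: beyond the spectral radius $\lambda$ itself, one needs simplicity of $\lambda$ as an eigenvalue and the existence of \emph{strictly} positive Perron eigenvectors on both sides, and all of these come precisely from the irreducibility hypothesis. A secondary subtlety is that Cartan matrices here are allowed to be non-symmetric, so the argument cannot rely on diagonalisability of $A$; however, every conclusion in the proposition can be phrased purely in terms of $v$, $w$ and $A^{-1}$ (when it exists), so once the Perron data are in place the three cases are just short computations testing the sign of $\mu_0$ against sign conditions on $x$ and $Ax$.
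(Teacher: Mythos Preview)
Your proof is correct. However, note that the paper does not actually give its own proof of this proposition: it is stated with the attribution ``Theorem 3 of Vinberg \cite{bible}'' and used as a black box. So there is no proof in the paper to compare against.

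That said, your approach via Perron--Frobenius applied to $B = 2I - A$ is essentially the classical argument; it is the route taken by Vinberg in \cite{bible} and by Kac in the context of generalized Cartan matrices. Your treatment of the three cases is clean: the Neumann series in case $\mu_0 > 0$ gives the strict positivity of $A^{-1}$ directly, the simplicity of the Perron root handles the rank statement in case $\mu_0 = 0$, and the left Perron eigenvector $w$ does all the work in the sign arguments for cases $\mu_0 = 0$ and $\mu_0 < 0$. The one point worth making explicit for completeness is mutual exclusivity of (P), (Z), (N) as stated; this follows immediately from the sign of $\mu_0$, but it can also be read off the conditions themselves (e.g.\ the $y$ in (Z) would violate the conclusion of (N)). Your remark that the argument nowhere uses symmetry of $A$ is exactly right and is the reason the result applies to general Cartan matrices in Vinberg's sense.
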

A Cartan matrix $A$ is of \emph{positive} (resp. \emph{zero}) \emph{type} if every component of $A$ is of positive (resp. zero) type.

\begin{corollary}\label{cor:useful}
Let $A$ be a Cartan matrix (not necessarily irreducible). If there exists a vector $x > 0$ such that $A x =0$, then $A$ is of zero type.
\end{corollary}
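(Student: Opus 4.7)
The plan is to reduce to the irreducible case and then eliminate types $\mathrm{(P)}$ and $\mathrm{(N)}$ from Proposition \ref{prop:cartan_type}. Write $A = A_1 \oplus \cdots \oplus A_k$ as a direct sum of irreducible components, and decompose the vector $x$ accordingly as $x = (x_1, \ldots, x_k)$ with $x_i$ the block corresponding to $A_i$. Since $x > 0$, each subvector satisfies $x_i > 0$, and since $Ax = 0$ we get $A_i x_i = 0$ for every $i$.

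It now suffices to show that each irreducible component $A_i$ is of zero type. Fix $i$ and apply the trichotomy of Proposition \ref{prop:cartan_type}. If $A_i$ were of positive type $\mathrm{(P)}$, it would be nonsingular, but $A_i x_i = 0$ with $x_i > 0$ would force $x_i = 0$, a contradiction. If $A_i$ were of negative type $\mathrm{(N)}$, the implication \textquotedblleft$A_i x_i \geqslant 0$ and $x_i \geqslant 0$ imply $x_i = 0$\textquotedblright\ would again contradict $x_i > 0$. Therefore $A_i$ must be of zero type $\mathrm{(Z)}$.

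Since every component of $A$ is of zero type, $A$ itself is of zero type by the definition stated just before the corollary. There is really no obstacle here: the whole argument is just checking the signs allowed by the cases of Proposition \ref{prop:cartan_type}, with the positivity of $x$ ruling out the two nonzero alternatives component by component.
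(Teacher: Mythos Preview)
Your proof is correct and is exactly the natural argument: the paper states this corollary without proof, treating it as an immediate consequence of the trichotomy in Proposition~\ref{prop:cartan_type}, and your component-by-component elimination of types $\mathrm{(P)}$ and $\mathrm{(N)}$ is precisely how one justifies it.
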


A Cartan matrix $A = (A_{st})_{s,t \in S}$ is \emph{compatible} with a Coxeter group $W_{S,M}$ if for every $s \neq t \in S$, $A_{st} A_{ts}  = 4 \cos^2 \left( \frac{\pi}{M_{st}} \right)$ if $M_{st} < \infty$, and $A_{st} A_{ts} \geqslant 4$ otherwise. For any Coxeter group $W$, the Cosine matrix $\mathsf{C}_W$ of $W$ is in fact compatible with $W$.

\begin{proposition}[Propositions 22 and 23 of Vinberg \cite{bible}]\label{prop:usefull}
Assume that a Cartan matrix $A$ is compatible with a Coxeter group $W$. Then the following hold:
\begin{enumerate}
\item the matrix $A$ is of positive type if and only if $W$ is spherical;
\item if $A$ is of zero type, then $W$ is affine;
\item conversely, if $A=\mathsf{C}_W$ and $W$ is affine, then $A$ is of zero type.
\end{enumerate}
\end{proposition}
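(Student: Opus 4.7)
The plan is to reduce to the irreducible case and then identify the type of a compatible Cartan matrix via a Perron--Frobenius comparison with the Cosine matrix. The reduction is immediate: compatibility forces $A_{st} = 0 \iff M_{st} = 2 \iff (\mathsf{C}_W)_{st} = 0$, so the irreducible block decomposition of $A$ matches the decomposition of $W$ into components, and the restriction of $A$ to each block is compatible with the corresponding component of $W$. So we may assume $A$ and $W$ are both irreducible.

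Write $A = 2I - B$ and $\mathsf{C}_W = 2I - C$, where $B$ and $C$ are non-negative irreducible matrices with zero diagonal. Reformulating Proposition~\ref{prop:cartan_type} through Perron--Frobenius, $A$ has type $\mathrm{(P)}$, $\mathrm{(Z)}$, or $\mathrm{(N)}$ according to whether the spectral radius $\rho(B)$ is $<2$, $=2$, or $>2$ (e.g., type $\mathrm{(Z)}$ means $By = 2y$ for some $y>0$, which by Perron--Frobenius is equivalent to $\rho(B)=2$). Since $\mathsf{C}_W$ is symmetric, the cases $\rho(C) < 2$, $=2$, $>2$ correspond respectively to $\mathsf{C}_W$ being positive definite, positive semi-definite of corank one with all proper principal minors positive definite, or indefinite---i.e.\ to $W$ being spherical, affine, or neither.

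The key step is the comparison $\rho(C) \leqslant \rho(B)$, with equality whenever every $M_{st} < \infty$. Compatibility gives $B_{st}B_{ts} \geqslant C_{st}^2$, so the symmetric matrix defined by $\tilde{B}_{st} := \sqrt{B_{st}B_{ts}}$ satisfies $C \leqslant \tilde{B}$ entrywise. Pairing each closed walk $\gamma$ of length $k$ at $s$ with its reverse $\gamma^{-1}$ and applying Cauchy--Schwarz yields $(\tilde{B}^k)_{ss} = \sum_\gamma \sqrt{w(\gamma)\,w(\gamma^{-1})} \leqslant (B^k)_{ss}$; Gelfand's formula $\rho(M) = \lim_k (\mathrm{tr}\,M^k)^{1/k}$ then gives $\rho(\tilde{B}) \leqslant \rho(B)$. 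Monotonicity of the Perron radius applied to $C \leqslant \tilde{B}$ finishes the chain $\rho(C) \leqslant \rho(\tilde{B}) \leqslant \rho(B)$, with equality throughout exactly when $B_{st}B_{ts} = C_{st}^2$ for every pair, i.e.\ when all $M_{st}$ are finite.

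The three assertions now follow quickly. For (1), if $W$ is spherical then every $M_{st} < \infty$, so the chain collapses to $\rho(B) = \rho(C) < 2$ and $A$ is of type $\mathrm{(P)}$; conversely, type $\mathrm{(P)}$ gives $\rho(C) \leqslant \rho(B) < 2$, so $\mathsf{C}_W$ is positive definite and $W$ is spherical. For (2), type $\mathrm{(Z)}$ gives $\rho(B) = 2$, hence $\rho(C) \leqslant 2$; the case $\rho(C) < 2$ would force $W$ spherical and then the tight comparison would yield $\rho(B) = \rho(C) < 2$, contradicting $\rho(B) = 2$. So $\rho(C) = 2$ and $W$ is affine. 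For (3), if $A = \mathsf{C}_W$ and $W$ is affine, then $C$ is symmetric non-negative irreducible with $\rho(C) = 2$, and Perron--Frobenius directly produces $y>0$ with $Cy = 2y$, i.e.\ $Ay = 0$, so $A$ is of type $\mathrm{(Z)}$. The delicate step is the inequality $\rho(\tilde{B}) \leqslant \rho(B)$, which must hold without assuming $A$ is symmetrizable---a case that genuinely arises when $M_{st} = \infty$ with $A_{st}A_{ts} > 4$---and the Cauchy--Schwarz closed-walk argument above bypasses symmetrizability entirely.
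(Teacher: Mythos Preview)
The paper does not prove this proposition; it cites Vinberg. Your Perron--Frobenius route is a genuine alternative and the chain $\rho(C)\leqslant\rho(\tilde B)\leqslant\rho(B)$ is correct, including the closed-walk Cauchy--Schwarz step. The gap is in the equality claim. You assert that the chain collapses whenever all $M_{st}<\infty$, but Cauchy--Schwarz over closed walks is tight only when $w_B(\gamma)=w_B(\gamma^{-1})$ for every closed walk $\gamma$, i.e.\ when $A$ is symmetrizable, and finiteness of the labels does not force this. Concretely, with
\[
B=\begin{pmatrix}0&2&1\\ \tfrac12&0&1\\ 1&1&0\end{pmatrix},\qquad A=2I-B,
\]
one has $B_{st}B_{ts}=1=4\cos^2(\pi/3)$ for every $s\neq t$, so $A$ is compatible with the affine group $\tilde A_2$ and every label equals~$3$; yet $\det(\lambda I-B)=\lambda^{3}-3\lambda-\tfrac52$ is negative at $\lambda=2$, so $\rho(B)>2=\rho(\tilde B)=\rho(C)$ and $A$ is of negative type. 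Thus your forward implication in~(1) and the contradiction step in~(2) both rest on a false premise.

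The repair is local to the case you actually need. If $W$ is irreducible spherical then its Coxeter diagram contains no cycle: an induced cycle on $k$ vertices would give a principal submatrix $2I-B_1$ of $\mathsf C_W$ with $B_1$ entrywise at least the $k$-cycle adjacency matrix $B_0$, hence $\rho(B_1)\geqslant\rho(B_0)=2$ and the submatrix is not positive definite. On a tree every compatible Cartan matrix is diagonally symmetrizable, so $B$ is similar to $\tilde B$ and $\rho(B)=\rho(\tilde B)=\rho(C)<2$; with this extra sentence both~(1) and~(2) go through. Vinberg's own argument proceeds instead via his Lemma~13 (positive-type Cartan matrices are symmetrizable, without appeal to the tree property) together with properties of the Tits--Vinberg representation.
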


\subsection{Coxeter polytopes}

Let $V$ be a vector space over $\mathbb{R}$, and let $\mathbb{S}(V)$ be the projective sphere, i.e. the space of half-lines in $V$. We denote by $\mathrm{SL}^{\pm}(V)$ the group of automorphisms of $\mathbb{S}(V)$, i.e. $ \mathrm{SL}^{\pm}(V) = \{ X \in \mathrm{GL}(V) \mid \det (X) = \pm 1 \} $. The projective sphere $\mathbb{S}(V)$ and the group $\mathrm{SL}^{\pm}(V)$ are double covers of the projective space $\mathbb{P}(V)$ and the group  of projective transformations $\mathrm{PGL}(V)$, respectively.

\medskip

We denote by $\mathbb{S}$ the natural projection of $V \setminus \{0\}$ onto $\mathbb{S}(V)$. For any subset $U$ of $V$, $\mathbb{S}(U)$ denotes $\mathbb{S}(U \setminus \{ 0 \})$ for the simplicity of the notation. A subset $\mathcal{C}$ of $\mathbb{S}(V)$ is \emph{convex} if there exists a convex cone $U$ of $V$ such that $\mathcal{C} = \mathbb{S}(U)$, and $\mathcal{C}$ is \emph{properly convex} if in addition its closure $\overline{\mathcal{C}}$ does not contain a pair of antipodal points. In other words, $\mathcal{C}$ is properly convex if and only if $\overline{\mathcal{C}}$ is contained and convex in some affine chart. Note that if $\mathcal{C}$ is a properly convex subset of $\mathbb{S}(V)$, then the subgroup $\mathrm{SL}^{\pm}(\mathcal{C})$ of $\mathrm{SL}^{\pm}(V)$ preserving $\mathcal{C}$ is naturally isomorphic to a subgroup of $\mathrm{PGL}(V)$.

\medskip

A \emph{projective polytope} is a properly convex subset $P$ of $\mathbb{S}(V)$ such that $P$ has a non-empty interior and 
$ P = \bigcap_{i=1}^{n} \mathbb{S}( \{ x \in V \mid \alpha_i (x) \leqslant 0  \}    )$, where $\alpha_i$, $i=1, \dotsc, n$, are linear forms on $V$. We always assume that the set of linear forms is \emph{minimal}, i.e. none of the half spaces $\mathbb{S}( \{ x \in V \mid \alpha_i (x) \leqslant 0  \}    )$ contain the intersection of all the others. A \emph{projective reflection} is an element of $\mathrm{SL}^{\pm}(V)$ of order $2$ which is the identity on a hyperplane. Every projective reflection $\sigma$ can be written as:
$$  \sigma = \mathrm{Id} - \alpha \otimes b$$
where $\alpha$ is a linear form on $V$ and $b$ is a vector of $V$ such that $\alpha(b)=2$.

\medskip

A \emph{pre-Coxeter polytope} is a pair of a projective polytope $P$ of $\mathbb{S}(V)$:
$$ P = \bigcap_{s \in S} \mathbb{S}( \{ x \in V \mid \alpha_s (x) \leqslant 0  \}    )$$ 
and a set of projective reflections $(\sigma_s = \mathrm{Id} - \alpha_s \otimes b_s)_{s \in S}$ with $\alpha_s(b_s) =2$. A pre-Coxeter polytope is called a \emph{Coxeter polytope} if 
$$ \gamma \accentset{\circ}{P} \cap \accentset{\circ}{P} = \varnothing \quad \textrm{for every } \gamma \in \Gamma_P \backslash \{ \mathrm{Id} \}$$ 
where $\accentset{\circ}{P}$ is the interior of $P$ and $\Gamma_P$ is the subgroup of $\mathrm{SL}^{\pm}(V)$ generated by the reflections $(\sigma_{s})_{s \in S}$. If this is the case, then $\Gamma_P$ is a \emph{projective Coxeter group} generated by reflections $(\sigma_{s})_{s \in S}$. In \cite{bible}, Vinberg showed that a pre-Coxeter polytope $(\,P,\, (\sigma_s = \mathrm{Id} - \alpha_s \otimes b_s)_{s \in S} )$ is a Coxeter polytope if and only if the matrix $A = (\alpha_s(b_t))_{s,t \in S}$ is a Cartan matrix. We often denote the Coxeter polytope simply by $P$, and we call $A$ the \emph{Cartan matrix} of the Coxeter polytope $P$.

\medskip

 For any Coxeter polytope $P$, there is a unique Coxeter group, denoted $W_P$, compatible with the Cartan matrix of $P$. If $q$ is a subset of $P$, then we set $S_q = \{s \in S \mid q \subset \mathbb{S}(\mathrm{Ker} \alpha_s) \}$, where $\mathrm{Ker} \alpha_s$ is the kernel of $\alpha_s$.

\begin{theorem}[Tits, Chapter V of \cite{Bourbaki_group_456}, and Theorem 2 of Vinberg \cite{bible}]\label{theo_vinberg}
Let $P$ be a Coxeter polytope of $\mathbb{S}(V)$ with Coxeter group $W_P$, and let $\G_P$ be the group generated by the projective reflections $(\sigma_s)_{s \in S}$. Then the following hold:
\begin{enumerate}
\item the homomorphism $\sigma : W_P \rightarrow \mathrm{SL}^{\pm}(V)$ defined by $\sigma(s) =
\sigma_s$ is well-defined and is an isomorphism onto $\Gamma_P$, which is a discrete subgroup of $\mathrm{SL}^{\pm}(V)$;

\item the $\Gamma_P$-orbit of $P$ is a convex subset $\C_P$ of $\S(V)$;

\item if $\Omega_P$ is the interior of $\C_P$, then $\G_P$ acts properly discontinuously on $\Omega_P$;

\item for each point $x \in P$, the subgroup $\sigma (W_{S_x})$ is the stabilizer of $x$ in $\Gamma_P$;

\item an open face $f$ of $P$ lies in $\O_P$ if and only if the Coxeter group $W_{S_f}$ is finite.
\end{enumerate}
\end{theorem}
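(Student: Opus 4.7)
The plan is to follow the classical Tits--Vinberg strategy, proving the five assertions in order with each step building on the previous. The compatibility of the Cartan matrix $A = (\alpha_s(b_t))_{s,t \in S}$ with the Coxeter matrix $M$ of $W_P$ is what makes everything work.

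For assertion (1), I would first verify that $\sigma$ is well defined, i.e.\ that $(\sigma_s \sigma_t)^{M_{st}} = \mathrm{Id}$ whenever $M_{st} < \infty$. The pair $\sigma_s, \sigma_t$ preserves the plane $V_{st} = \mathbb{R} b_s + \mathbb{R} b_t$ and acts trivially on a complement, so the question reduces to a $2\times 2$ computation. On $V_{st}$ the product $\sigma_s \sigma_t$ has determinant $1$ and trace $\alpha_s(b_t)\alpha_t(b_s) - 2$; by the Cartan compatibility condition this equals $2\cos(2\pi/M_{st})$, hence $\sigma_s\sigma_t$ is a rotation of order $M_{st}$. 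When $M_{st} = \infty$, the condition $\alpha_s(b_t)\alpha_t(b_s) \geqslant 4$ shows that $\sigma_s\sigma_t$ has infinite order, so no additional relation is needed. Injectivity of $\sigma$ and discreteness of $\Gamma_P$ will come out of the tiling argument below.

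For assertions (2) and (3), I would run Vinberg's inductive construction of the orbit. Let $W^{(k)}$ denote the elements of $W_P$ of word length at most $k$ and set $\mathcal{C}_P^{(k)} = \bigcup_{w \in W^{(k)}} \sigma(w)(P)$. The key inductive claim is that $\mathcal{C}_P^{(k)}$ is properly convex, that $\sigma$ restricted to $W^{(k)}$ is injective, and that two distinct translates meet only along walls. The inductive step uses the combinatorial structure of Coxeter systems: adjacent translates $\sigma(w)(P)$ and $\sigma(ws)(P)$ are glued along $\sigma(w)\,\mathbb{S}(\mathrm{Ker}\,\alpha_s)$, and the local rotation of order $M_{st}$ on $V_{st}$ arranges exactly $2M_{st}$ chambers around each codimension-two face labelled by $\{s, t\}$, matching the dihedral subgroup generated by $\sigma_s, \sigma_t$. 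Taking the union yields a convex $\mathcal{C}_P$, and the local finiteness of the tiling forces $\Gamma_P$ to act properly discontinuously on the interior $\Omega_P$; distinct words produce distinct chambers, giving injectivity of $\sigma$ and discreteness of $\Gamma_P$.

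For assertion (4), the inclusion $\sigma(W_{S_x}) \subseteq \mathrm{Stab}_{\Gamma_P}(x)$ is tautological. For the reverse inclusion, if $\gamma = \sigma(w)$ fixes $x \in P$, then $\gamma(P)$ and $P$ both contain $x$; by the tiling this forces $\gamma(P) = P$ along with $\gamma$ fixing a face of $P$ through $x$, and an exchange-condition induction reduces $w$ to a word in $S_x$. Assertion (5) is then a combination of (4) with Proposition \ref{prop:usefull}: an open face $f$ lies in $\Omega_P$ iff $\Gamma_P$ fills a full neighbourhood of a point of $f$, iff the local isotropy $\sigma(W_{S_f})$ tiles a neighbourhood of $x$ as a finite reflection group on a transversal slice, iff the restricted Cartan matrix $(A_{st})_{s,t \in S_f}$ is of positive type, iff $W_{S_f}$ is spherical, i.e.\ finite.

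The main obstacle is the induction underlying assertions (2) and (3): one must show that the chambers attached at step $k+1$ do not overlap with $\mathcal{C}_P^{(k)}$ except along walls and that the resulting union remains properly convex. This requires the careful analysis of galleries crossing walls together with a global propagation of convexity, the technical core of Vinberg's paper \cite{bible}. I would follow his argument directly rather than attempt any shortcut, since the combinatorial bookkeeping around codimension-two faces, in which the order of the dihedral rotation must match precisely the value $M_{st}$ prescribed by the Cartan matrix, is exactly the mechanism that ties together the group-theoretic relations, the geometry of the polytope, and the local structure near lower-dimensional faces invoked in assertions (4) and (5).
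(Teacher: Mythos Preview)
The paper does not supply its own proof of this theorem: it is stated as a classical result and attributed directly to Tits (Bourbaki, Chapter~V) and to Theorem~2 of Vinberg~\cite{bible}, then used as a black box throughout Section~4. So there is no ``paper's proof'' to compare against.

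That said, your sketch is a faithful outline of the standard Tits--Vinberg argument and the ingredients you identify are the right ones. A couple of small points of precision: in assertion~(1), the reflections $\sigma_s,\sigma_t$ do not literally act trivially on ``a complement'' but rather on $\mathrm{Ker}\,\alpha_s \cap \mathrm{Ker}\,\alpha_t$, which together with $V_{st}$ spans $V$ when $b_s,b_t$ are independent; and in assertion~(4), the reduction of $w$ to a word in $S_x$ uses not just the exchange condition but the full deletion/strong-exchange machinery for Coxeter systems together with the fact that $P$ is a strict fundamental domain. You correctly flag the inductive convexity/non-overlap step as the technical heart, and deferring to Vinberg's original argument there is the honest choice.
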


\subsection{Tits representations}\label{section:Tits}

To a Cartan matrix $A = (A_{st})_{s,t \in S}$ is associated a Coxeter simplex $\Delta_A$ of $\mathbb{S}(\mathbb{R}^S)$ as follows:
\begin{itemize}
\item for each $t \in S$, we set $\tilde{\alpha}_t = e_t^*$, where $(e_t^*)_{t \in S}$ is the canonical dual basis of $\R^S$;
\item for each $t \in S$, we take the unique vector $\tilde{b}_t \in \R^S$ such that $\tilde{\alpha}_s(\tilde{b}_t) = A_{st}$ for all $s \in S$;
\item the Coxeter simplex $\Delta_A$ is the pair of the projective simplex
 $\cap_{s \in S} \mathbb{S}( \{ x \in \mathbb{R}^S \mid \tilde{\alpha}_s(x) \leqslant 0 \})$ and the set of reflections $(\tilde{\sigma}_s = \mathrm{Id} - \tilde{\alpha}_s \otimes \tilde{b}_s)_{s \in S}$.
\end{itemize}

We let $W_A$ denote the unique Coxeter group compatible with $A$. Then Theorem \ref{theo_vinberg} provides the discrete faithful representation $\tilde{\sigma}_A : W_A \to  \mathrm{SL}^{\pm}(\mathbb{R}^S)$ whose image $\tilde{\Gamma}_A = \tilde{\sigma}_A(W_A)$ is the group generated by the reflections $(\tilde{\sigma}_s)_{s \in S}$. 

\medskip

 \textit{Assume now that none of the components of $A$ is of zero type.} We let $V_A$ denote the linear span of $(\tilde{b}_s)_{s \in S}$. For each $s \in S$, we set $\alpha_s = \tilde{\alpha}_s|_{V_A} $, the restriction of $\tilde{\alpha}_s$ to $V_A$, and $b_s = \tilde{b}_s$. By Proposition 13 of Vinberg \cite{bible}, the convex subset 
$$ P_A = \bigcap_{s \in S} \mathbb{S} (\{  x \in V_A \mid \alpha_s(x) \leqslant 0\}) $$ 
of $\mathbb{S}(V_A)$ has a non-empty interior. Moreover, since $\dim(V_A) = \mathrm{rank}(A)$, the subset $P_A$ is properly convex by  Proposition 18 of \cite{bible}. In other words, $P_A$ is a projective polytope of $\mathbb{S}(V_A)$ (cf. Corollary 1 of \cite{bible}). The projective polytope $P_A$ of $\mathbb{S}(V_A)$ together with the reflections $(\sigma_s = \mathrm{Id} - \alpha_s \otimes b_s)_{s \in S}$ is a Coxeter polytope, denoted again $P_A$, and  Theorem \ref{theo_vinberg} provides the discrete faithful representation $\sigma_A : W_A \to  \mathrm{SL}^{\pm}(V_A)$ whose image $\Gamma_A = \sigma_A(W_A)$ is the group generated by the reflections $(\sigma_s)_{s \in S}$. We denote by $P_A^*$ the convex hull of $(b_s)_{s \in S}$ in $\mathbb{S}(V_A)$, \ie
$$ P_A^* = \mathbb{S}(\{ x \in V_A \mid x = \sum_{s \in S} c_s b_s \, \textrm{ for some } c = (c_s)_{s \in S}  \geqslant 0 \}).$$

\medskip

 \textit{Assume in addition that $A$ is symmetric.} By Theorem 6 of Vinberg \cite{bible}, there exists a $\Gamma_A$-invariant scalar product $B_A$ on $V_A$ such that $B_A(b_s,b_t) = A_{st}$ for every $s, t \in S$, i.e. $\alpha_s = B_A(b_s, \cdot)$ for all $s \in S$.  Thus, the group $\Gamma_A$ is a subgroup of the orthogonal group $\mathrm{O}(B_A)$ of the non-degenerate scalar product $B_A$ on $V_A$, and preserves the \emph{negative open set of $A$}:
$$
O_A^{\light} = \mathbb{S}(\{ x \in V_A \mid B_A(x,x) < 0 \})
$$
Note that if the signature of $A$ is $(p,q,r)$, then $B_A$ is of signature $(p,q)$ and $\mathrm{O}(B_A)$ is naturally isomorphic to $\mathrm{O}_{p,q}(\mathbb{R})$. 

\medskip

For any Coxeter group $W_S$, the Cosine matrix $\mathsf{C}_W$ of $W_S$ is a symmetric Cartan matrix compatible with $W_S$, and so if $\mathsf{C}_W$ has no component of zero type, then there exists the representation $\sigma_{\mathsf{C}_W} : W_S \to \mathrm{O}(B_{\mathsf{C}_W})$, which we call the \emph{Tits--Vinberg representation} (simply \emph{Tits representation}) of $W_S$. 

\begin{remark}
The representation $\tilde{\sigma}_{\mathsf{C}_W} : W \to \mathrm{SL}^{\pm}(\mathbb{R}^S)$ exists without any condition on $\mathsf{C}_W$, and it is in fact conjugate to a representation of $W$ introduced by Tits (see Chapter V of \cite{Bourbaki_group_456}). However, if $\mathsf{C}_W$ is nonsingular, then $\mathsf{C}_W$ has no component of zero type, $V_{\mathsf{C}_W}=\mathbb{R}^S$ and the representation $\sigma_{\mathsf{C}_W} : W \to \mathrm{SL}^{\pm}(V_{\mathsf{C}_W})$ is conjugate to $\tilde{\sigma}_{\mathsf{C}_W} : W \to \mathrm{SL}^{\pm}(\mathbb{R}^S)$, i.e. the representations of $W$, $\sigma_{\mathsf{C}_W}$ and $\tilde{\sigma}_{\mathsf{C}_W}$, are essentially the same. 
\end{remark}

\subsection{Two Lemmas} 

In this section, we use the same notation as in Section \ref{section:Tits} and prove Theorem \ref{thm:dgk}, which is a generalization of Theorem 8.2 of Danciger--Guéritaud--Kassel \cite{DGK}. For any subset $U \subset S$, if $X$ is an $S \times S$ matrix or a vector of $\mathbb{R}^S$, then $X_U$ denotes the restriction of $X$ to $U \times U$ or $U$, respectively.

\begin{theorem}\label{thm:dgk}
Let $A = (A_{st})_{s,t \in S}$ be an irreducible symmetric Cartan matrix of signature $(p,q,r)$ for some $p, q \geqslant 1$, and let $W_A$ be the Coxeter group compatible with $A$. Assume that the following conditions are satisfied:
\begin{itemize}
\item[$(\mathsf{H}_0)$] for every subset $T$ of $S$, the matrix $A_T$ is \emph{not} of zero type;

\item[$(\mathsf{H}_-)$] there is \emph{no} pair of orthogonal subsets $T, U$ of $S$ such that $A_T$ and $A_U$ are of negative type.
\end{itemize}

Then the subgroup $\Gamma_A = \sigma_A(W_A)$ of $\mathrm{O}(B_A) \simeq \mathrm{O}_{p,q}(\mathbb{R})$ is $\mathbb{H}^{p,q-1}$-convex cocompact.
\end{theorem}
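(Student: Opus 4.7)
The plan is to verify the Danciger--Guéritaud--Kassel characterization of $\mathbb{H}^{p,q-1}$-convex cocompactness recalled in the introduction: an irreducible discrete subgroup of $\mathrm{PO}_{p,q}(\mathbb{R})$ is $\mathbb{H}^{p,q-1}$-convex cocompact if and only if it is Gromov-hyperbolic, $P_1^{p,q}$-Anosov, and has negative limit set. Discreteness and faithfulness of $\Gamma_A = \sigma_A(W_A)$ are supplied by Theorem \ref{theo_vinberg}, and irreducibility of the $\Gamma_A$-action on $V_A$ follows from the irreducibility of $A$: a proper $\Gamma_A$-invariant subspace would give, via $B_A$, a $\Gamma_A$-invariant orthogonal splitting incompatible with the connectedness of the Coxeter diagram of $W_A$.

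The first step is Gromov-hyperbolicity of $W_A$ via Moussong's criterion (Theorem \ref{thm:moussong}). Hypothesis $(\mathsf{H}_0)$ forbids any subset $T \subset S$ from being affine of rank $\geq 3$, since irreducible affine Coxeter diagrams carry no $\infty$-edge, so their only compatible Cartan matrix is $\mathsf{C}_{W_T}$, which is of zero type by Proposition \ref{prop:usefull}(3). Hypothesis $(\mathsf{H}_-)$ together with $(\mathsf{H}_0)$ excludes two orthogonal non-spherical subsets: if $T \perp U$ were non-spherical, Proposition \ref{prop:usefull}(1) combined with $(\mathsf{H}_0)$ would force each of $A_T, A_U$ to contain an irreducible component of negative type, contradicting $(\mathsf{H}_-)$.

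The second and main step, modeled on Theorem 8.2 of \cite{DGK}, is to exhibit a properly convex closed $\Gamma_A$-invariant subset $\mathcal{C} \subset \mathbb{H}^{p,q-1}$ with non-empty interior, on which $\Gamma_A$ acts cocompactly, and such that $\overline{\mathcal{C}} \setminus \mathcal{C}$ contains no non-trivial projective segment; this gives convex cocompactness directly from the definition and by DGK in turn forces the $P_1^{p,q}$-Anosov property and negativity of $\Lambda_{\Gamma_A}$. The natural candidate is $\mathcal{C} = \overline{\Omega_A \cap \mathbb{H}^{p,q-1}}$, with $\Omega_A$ the Vinberg domain of Theorem \ref{theo_vinberg}, or equivalently the closed convex hull in $\mathbb{H}^{p,q-1}$ of the proximal limit set of $\Gamma_A$; a candidate fundamental domain for $\Gamma_A$ acting on $\mathcal{C}$ is then $P_A \cap \mathcal{C}$.

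The main obstacle is the boundary analysis. Cocompactness reduces to showing $P_A \cap \mathcal{C}$ is compact, and the no-segment condition reduces to controlling how the $\Gamma_A$-orbit of $P_A$ accumulates on the light cone $\{B_A = 0\}$. Both are handled face by face: for each spherical coset-type $T = S_f$ attached to a face $f$ of $P_A$, Propositions \ref{prop:cartan_type} and \ref{prop:usefull} translate the type (positive, zero, negative) of the Cartan submatrix $A_T$ into the position of $f$ relative to $\partial \mathbb{H}^{p,q-1}$. Hypothesis $(\mathsf{H}_0)$ rules out any face with parabolic (zero-type) stabilizer, which is exactly what would produce a cusp escaping to the ideal boundary and break compactness of $P_A \cap \mathcal{C}$. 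Hypothesis $(\mathsf{H}_-)$ rules out two orthogonal negative-type subsets, which is what would otherwise generate a two-dimensional flat direction on $\partial \mathbb{H}^{p,q-1}$ and hence a projective segment in $\overline{\mathcal{C}} \setminus \mathcal{C}$. Once this face-by-face dictionary is set up, the argument follows the same scheme as in \cite{DGK}, but now for arbitrary (not necessarily right-angled) Coxeter polytopes.
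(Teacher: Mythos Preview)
Your outline has the right destination (exhibit a properly convex closed $\Gamma_A$-invariant set in $\mathbb{H}^{p,q-1}$ with cocompact action and no boundary segments) but is missing the key technical device that makes it work, and your candidate $\mathcal{C}$ is not the right one.

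The paper does \emph{not} go through Moussong's criterion or the Anosov characterization; it argues directly from the definition of convex cocompactness. The crucial object is the \emph{dual polytope} $P_A^* = \mathbb{S}(\{\sum_s c_s b_s : c \geq 0\})$, and the two key lemmas (generalizing Lemmas~8.8 and~8.9 of \cite{DGK} from the right-angled case) are:
\begin{itemize}
\item $(\mathsf{H}_0)$ is equivalent to $P_A \cap P_A^* \subset O_A^-$;
\item $(\mathsf{H}_0)$ and $(\mathsf{H}_-)$ together imply $P_A \cap P_A^* \subset \Omega_{P_A}$.
\end{itemize}
The point is that $P_A \cap P_A^*$ is automatically a \emph{compact} polytope, so its $\Gamma_A$-orbit $\mathcal{C}'$ gives a properly discontinuous cocompact action inside $O_A^- \cap \Omega_{P_A}$; the actual $\mathcal{C}$ is then the intersection of all $\Gamma_A$-translates of $P_A^* \cap \Omega_{P_A}$, and the remaining properties (closed in $O_A^-$, no boundary segments) are handled by Lemmas~6.3, 8.7, 8.10, 8.11 of \cite{DGK} verbatim.

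Your candidate $\mathcal{C} = \overline{\Omega_A \cap \mathbb{H}^{p,q-1}}$ does not come with an obvious compact fundamental domain: $P_A \cap \mathbb{H}^{p,q-1}$ need not be compact (indeed $\mathbb{H}^{p,q-1}$ is not properly convex for $q \geq 2$), and your ``face-by-face'' dictionary does not address this. The conditions $(\mathsf{H}_0)$, $(\mathsf{H}_-)$ are used not to classify which faces of $P_A$ lie in $\Omega_{P_A}$, but rather to show that for any $x \in P_A \cap P_A^*$ the stabilizer type $S_x$ is spherical; this uses the expression $x = \sum c_s b_s$ (coming from $P_A^*$) to split $S_x = S_x^> \sqcup S_x^0$ and analyze each piece separately. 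Without $P_A^*$ you have no handle on compactness, and the proof does not go through.

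A secondary remark: invoking Moussong's criterion here is unnecessary for the direct construction, and in the logic of this paper would be circular, since Section~\ref{section:Moussong} deduces Moussong's criterion \emph{from} Theorem~\ref{thm:dgk}.
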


In the paper \cite{DGK}, the authors prove the Theorem in the case of right-angled Coxeter groups. The key lemmas, Lemmas 8.8 and 8.9 of \cite{DGK}, of the proof are generalized to arbitrary Coxeter groups in Lemmas \ref{lem:light} and \ref{lem:cocompact}, and the rest of the proof is the same as in the proof of Theorem 8.2 of \cite{DGK}.

\begin{lemma}\label{lem:light}
Let $A$ be a symmetric Cartan matrix and $O_A^{\light}$ the negative open set of $A$. Assume that $A$ has no component of zero type. Then the matrix $A$ satisfies $(\mathsf{H}_0)$ if and only if $P_A \cap P_A^\ast \subset O_A^{\light}$.
\end{lemma}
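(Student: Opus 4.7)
The plan is to argue both directions by an explicit computation relating $B_A(x,x)$ to the coordinates of $x$ in the basis-like family $(b_s)_{s \in S}$, and then invoke Corollary \ref{cor:useful} to identify when equality $B_A(x,x) = 0$ can occur.

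First I would exploit the symmetry $\alpha_s = B_A(b_s, \cdot)$. If $[x] \in P_A \cap P_A^*$, I lift $x \in V_A$ and write $x = \sum_{s \in S} c_s b_s$ with $c_s \geq 0$ (membership in $P_A^*$) and $\alpha_t(x) \leq 0$ for all $t \in S$ (membership in $P_A$). Expanding gives the key identity
\[
B_A(x,x) \;=\; B_A\!\Bigl(x, \sum_t c_t b_t\Bigr) \;=\; \sum_t c_t \,\alpha_t(x) \;\leq\; 0,
\]
so $P_A \cap P_A^* \subset \overline{O_A^{\light}}$ always. The content of the lemma is strictness.

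For the forward implication, I assume $(\mathsf{H}_0)$ and suppose for contradiction that some $[x] \in P_A \cap P_A^*$ satisfies $B_A(x,x) = 0$. Since each summand $c_t \alpha_t(x)$ is $\leq 0$, they must all vanish. Let $T = \{t \in S : c_t > 0\}$; then $T$ is non-empty (otherwise $x = 0$) and $\alpha_t(x) = \sum_{s \in T} c_s A_{ts} = 0$ for all $t \in T$, that is, $A_T c_T = 0$ with the positive vector $c_T = (c_s)_{s \in T}$. By Corollary \ref{cor:useful} the matrix $A_T$ is of zero type, contradicting $(\mathsf{H}_0)$.

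For the converse, assume $(\mathsf{H}_0)$ fails: some $A_T$ is of zero type, so by Proposition \ref{prop:cartan_type}(Z) applied componentwise there exists $c_T > 0$ with $A_T c_T = 0$. Define $x := \sum_{s \in T} c_s b_s$. Then $\alpha_t(x) = 0$ for $t \in T$, while for $t \notin T$ we have $\alpha_t(x) = \sum_{s \in T} c_s A_{ts} \leq 0$ since $A_{ts} \leq 0$ whenever $s \neq t$; combined with $c_s \geq 0$, this places $[x]$ in both $P_A$ and $P_A^*$, and $B_A(x,x) = c_T^{\,\top} A_T c_T = 0$ forces $[x] \notin O_A^{\light}$. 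The only subtle point is that I must ensure $x \neq 0$ in $V_A$, so that $[x]$ is a genuine point of $\mathbb{S}(V_A)$: this is the main obstacle to a clean argument. If $x = 0$, then $Ac = 0$ where $c$ extends $c_T$ by zero, and expanding the equation $(Ac)_u = 0$ for $u \notin T$ shows that every term $c_s A_{us}$ (each $\leq 0$) must vanish, whence $A_{us} = 0$ for all $u \notin T,\ s \in T$. This means $T$ is a union of connected components of $A$, and since $A_T$ is of zero type, at least one component of $A$ is of zero type — contradicting the hypothesis of the lemma that $A$ has no component of zero type. So $x \neq 0$, the construction is valid, and $P_A \cap P_A^* \not\subset O_A^{\light}$, completing the proof.
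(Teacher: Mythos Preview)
Your proof is correct and follows essentially the same approach as the paper's: both directions hinge on the identity $B_A(x,x) = \sum_t c_t\,\alpha_t(x)$ and on Corollary \ref{cor:useful} to detect zero-type submatrices. Your argument is in fact slightly more careful than the paper's in the converse direction: you explicitly verify that $x \neq 0$ in $V_A$ by showing that $x = 0$ would force $T$ to be a union of components of $A$, contradicting the standing hypothesis that $A$ has no component of zero type --- a point the paper leaves implicit.
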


\begin{proof}
In the proof, the subscript $A$ is omitted from the notation for simplicity. Assume $A$ satisfies $(\mathsf{H}_0)$ and take $x\in P \cap P^\ast$. By definition, we have:
$$
B(b_s,x) \leqslant 0 \quad \textrm{for every } s \in S
\quad \textrm{and} \quad
x = \sum_{s \in S} c_s b_s \quad \textrm{for some } c = (c_s)_{s \in S}  \geqslant 0
$$
which imply that:
\begin{align*}
B(x,x) & = \sum_{s \in S} c_s \, B(b_s,x) \, \leqslant \, 0
\end{align*}
and the equality holds if and only if $c_s B(b_s,x) = 0$ for all $s \in S$.
Thus, $x \in \overline{O^\light}$. Suppose by contradiction that $x \in \partial O^\light$, i.e. $B(x,x)=0$. If $S^>$ denotes $\{ s \in S \mid  c_s > 0 \} $, then:
\begin{align*}
B(b_s,x)=0  & \quad \forall s \in S^> \\
\sum_{t \in S^>} c_t B(b_s,b_t)=0 &\quad  \forall s \in S^>  \\
A_{S^>} \cdot c_{S^>} =0 &
\end{align*}
Therefore, we conclude by Corollary \ref{cor:useful} that $A_{S^>}$ is of zero type, which contradicts the condition $(\mathsf{H}_0)$.

\medskip

To prove the converse we suppose by contradiction that there is a subset $U \subset S$ such that $A_U$ is of zero type. Then there exists a vector $c = (c_s)_{s \in U} > 0$ such that $A_U \cdot c = 0$ by Proposition \ref{prop:cartan_type}. Let $x=\sum_{s \in U} c_s b_s$. Now it is easy to see that $x\in P \cap P^\ast$ and $B(x,x)=0$: contradiction.
\end{proof}

\begin{lemma}\label{lem:cocompact}
In the setting of Theorem \ref{theo_vinberg}, if the Cartan matrix $A_P$ of $P$ (not necessarily symmetric) satisfies $(\mathsf{H}_0)$ and $(\mathsf{H}_-)$ then $P \cap P^\ast \subset \O_{P}$.
\end{lemma}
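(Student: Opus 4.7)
My strategy is to reduce the statement to Theorem~\ref{theo_vinberg}(5): given $x \in P \cap P^{\ast}$, if $f$ denotes the open face of $P$ containing $x$, I aim to prove $W_{S_f}$ spherical, equivalently (by Proposition~\ref{prop:usefull}(1)) that the Cartan submatrix $A_{S_f}$ is of positive type. Writing $x = \sum_{s \in S} c_s b_s$ with $c_s \geqslant 0$ (available because $x \in P^{\ast}$), I set $S^{>} = \{s \in S : c_s > 0\}$ and split $S_f = T \sqcup U$ with $T = S_f \cap S^{>}$ and $U = S_f \setminus S^{>}$. A preliminary observation is that $U \perp S^{>}$: for $s \in U$, the identity $0 = \alpha_s(x) = \sum_{t \in S^{>}} c_t A_{st}$ is a sum of non-positive terms (as $s \notin S^{>}$ forces $s \neq t$, hence $A_{st} \leqslant 0$), so every summand vanishes. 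Thus $A_{S_f} = A_T \oplus A_U$, and it suffices to show that $A_T$ and $A_U$ are both of positive type.

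For $A_T$, I would fix a component $T_0$ of $A_T$; combining $\alpha_s(x) = 0$ for $s \in T_0 \subset T$ with $T_0 \perp (T \setminus T_0)$ produces
\begin{equation*}
(A_{T_0} c_{T_0})_s \;=\; -\sum_{t \in S^{>} \setminus T} c_t A_{st} \;\geqslant\; 0 \qquad \text{for every } s \in T_0,
\end{equation*}
with $c_{T_0} > 0$. Proposition~\ref{prop:cartan_type} applied to the irreducible Cartan matrix $A_{T_0}$ then rules out type~(N) (which would force $c_{T_0} = 0$) and type~(Z) (which would force $A_{T_0} c_{T_0} = 0$, hence $T_0 \perp (S^{>} \setminus T)$, making $T_0$ a component of $A_{S^{>}}$ annihilating the strictly positive vector $c_{T_0}$; Corollary~\ref{cor:useful} then renders $A_{T_0}$ of zero type, contradicting $(\mathsf{H}_0)$). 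Hence $A_{T_0}$ is of type~(P), so $A_T$ is of positive type.

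The main obstacle is $A_U$, where hypothesis $(\mathsf{H}_-)$ finally plays its role. Arguing by contradiction, I assume a component $A_{U_0}$ of $A_U$ is not of type~(P); since $(\mathsf{H}_0)$ forbids type~(Z), this $A_{U_0}$ must be of negative type. In parallel, I would establish that every component $A_{S_i}$ of $A_{S^{>}}$ is also of negative type: $c_{S_i} > 0$ and $A_{S_i} c_{S_i} = (\alpha_s(x))_{s \in S_i} \leqslant 0$, with $A_{S_i} c_{S_i} \neq 0$ because an entire component $S_i \subset T$ would, via Corollary~\ref{cor:useful} applied to $A_{S_i} c_{S_i} = 0$ and $c_{S_i} > 0$, yield $A_{S_i}$ of zero type against $(\mathsf{H}_0)$; then applying Proposition~\ref{prop:cartan_type} to $-c_{S_i}$ (so that $A_{S_i}(-c_{S_i}) \geqslant 0$ is non-zero) excludes both (P) (which would force $-c_{S_i} \geqslant 0$) and (Z) (which would force $A_{S_i} c_{S_i} = 0$), leaving (N). Since $x \neq 0$ gives $S^{>} \neq \varnothing$, such a component $S_i$ exists, and $S^{>} \perp U$ yields $S_i \perp U_0$, so $A_{S_i}$ and $A_{U_0}$ are two orthogonal submatrices of negative type, contradicting $(\mathsf{H}_-)$. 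The key delicacy throughout is that Proposition~\ref{prop:cartan_type} is phrased in terms of $Ax \geqslant 0$; to treat the condition $Ac \leqslant 0$ arising here for possibly non-symmetric Cartan matrices, I systematically apply it to $-c$ rather than $c$.
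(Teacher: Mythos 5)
Your proof is correct and follows the same skeleton as the paper's: the decomposition of $S_x = S_f$ into $S_x\cap S^{>}$ and its complement, the orthogonality of the latter with $S^{>}$, positivity of the first block via $A_{S_x^{>}}c_{S_x^{>}}\geqslant 0$ and $(\mathsf{H}_0)$, and positivity of the second block via $(\mathsf{H}_-)$ against a negative-type component of $A_{S^{>}}$. The one genuine difference is how that negative-type component is produced. The paper argues by contradiction that if no component of $A_{S^{>}}$ were of negative type, then $(\mathsf{H}_0)$ would force $A_{S^{>}}$ to be of positive type, and it then invokes Lemma 13 of Vinberg to replace $A_{S^{>}}$ by a positive definite symmetric matrix and contradicts $c_{S^{>}}^{T}A_{S^{>}}c_{S^{>}}\leqslant 0$. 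You instead apply the trichotomy of Proposition~\ref{prop:cartan_type} directly to the vector $-c_{S_i}$ on each component $S_i$ of $A_{S^{>}}$, using Corollary~\ref{cor:useful} and $(\mathsf{H}_0)$ to rule out $A_{S_i}c_{S_i}=0$; this avoids the symmetrization lemma entirely and yields the slightly stronger conclusion that \emph{every} component of $A_{S^{>}}$ is of negative type. Both routes are valid; yours is marginally more self-contained. (Minor remark: in your treatment of $A_T$, the detour through Corollary~\ref{cor:useful} to exclude type (Z) is redundant, since $(\mathsf{H}_0)$ already forbids $A_{T_0}$ itself from being of zero type.)
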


\begin{proof}
In the proof, the subscript $P$ is omitted from the notation for simplicity. Suppose that $x\in P \cap P^\ast$. In other words,
$$
\alpha_s(x) \leqslant 0 \quad \textrm{for every } s \in S
\quad \textrm{and} \quad
x = \sum_{s \in S} c_s b_s \quad \textrm{for some } c = (c_s)_{s \in S}  \geqslant 0.
$$
Let $S_x = \{ s \in S \mid \alpha_s(x) = 0\}  $. We aim to show that the Coxeter group $W_{S_x}$ is finite. Equivalently (Theorem \ref{theo_vinberg}), the stabilizer subgroup of $x$ in $\Gamma_P$ is finite. For this, we define the following subsets of $S$:
$$
S^> =\{ s \in S \mid c_s > 0 \} \quad\quad S_x^>= S_x \cap S^> \quad\quad S_x^0 = S_x \smallsetminus S_x^>
$$

First, we claim that $S_x^0 \perp S^>$. Indeed, by definition, we have $S_x^0 \cap S^> = \varnothing$, which implies that $A_{st} \leqslant 0$ for every $s \in S_x^0$ and every $t \in S^>$. Moreover, for each $s \in S_x$
\begin{align}
0= \alpha_s(x) = \sum_{t \in S^>} c_t \alpha_s(b_t) = \sum_{t \in S^>} c_t A_{st}.
\label{eq:lemma}
\end{align}
If $s \in S_x^0$, then each term of the right-hand sum in (\ref{eq:lemma}) is nonpositive, hence must be zero. Thus $A_{st} = 0$ for every $s \in S_x^0$ and every $t \in S^>$, which exactly means that $S_x^0  \perp S^>$ as claimed.

\medskip

Second, we claim that $A_{S_x^>}$ is of positive type. Indeed, if $s \in S_x^>$, then:
\begin{align*}
0= \alpha_s(x) = \sum_{t \in S} c_t A_{st} = \sum_{t \in S_x^>} c_t A_{st} + \sum_{t \notin S_x^>} c_t A_{st}
\end{align*}
which implies that $\sum_{t \in S_x^>} c_t A_{st} \geqslant 0$. Since $c_{S_x^>} > 0$ and $A_{S_x^>} \cdot c_{S_x^>} \geqslant 0$, we deduce from Proposition \ref{prop:cartan_type} that each component of $A_{S_x^>}$ is either of positive type or of zero type. Therefore, the condition $(\mathsf{H_0})$ implies that the matrix $A_{S_x^>}$ is of positive type as claimed.

\medskip

Third, we claim that the matrix $A_{S^>}$ has a component of negative type. Indeed, suppose by contradiction that every component of $A_{S^>}$ is not of negative type. Then the condition $(\mathsf{H_0})$ implies that $A_{S^>}$ is of positive type, and so by Lemma 13 of Vinberg \cite{bible}, we may assume without loss of generality that $A_{S^>}$ is a positive definite symmetric matrix, \ie for every non-zero vector $v = (v_s)_{s \in S^>}$, the scalar $v^T \cdot A_{S^>} \cdot v$ is positive, where $v^T$ denotes the transpose of $v$. However, this is impossible because: 
$$ c_{S^>}^T \cdot A_{S^>} \cdot c_{S^>} = \sum_{s, t \in S^>} c_s c_t A_{st} = \sum_{s \in S^>} c_s \alpha_s(x) \leqslant 0$$

\medskip

Fourth, we claim that $A_{S_x^0}$ is of positive type. Indeed, assume by contradiction that $A_{S_x^0}$ is not of positive type. Once again, the condition $(\mathsf{H_0})$ implies that $A_{S_x^0}$ has a component of negative type. This contradicts the condition $(\mathsf{H_-})$ because $S_x^0 \perp S^>$ and $A_{S^>}$ has a component of negative type.

\medskip

Finally, we claim that the Coxeter group $W_{S_x}$ is finite. Indeed, since $A_{S_x} = A_{S_x^>} \oplus A_{S_x^0}$ is of positive type, by Proposition \ref{prop:usefull} we have that the Coxeter group $W_{S_x}$ is spherical, i.e. it is finite as claimed.

\medskip

In conclusion, we deduce from the item (5) of Theorem \ref{theo_vinberg} that $x \in \Omega$.
\end{proof}

We now give a brief summary of the proof of Theorem \ref{thm:dgk} from \cite{DGK} for the reader’s convenience: 

\begin{proof}[Proof of Theorem \ref{thm:dgk}]
As in Theorem \ref{theo_vinberg}, we denote by $\Omega_{P_A}$ the interior of the $\Gamma_A$-orbit of $P_A$. Since $A$ is of negative type, the convex set $\Omega_{P_A}$ is properly convex by Lemma 15 of Vinberg \cite{bible}. Let $\mathcal{C}' \subset \mathbb{S}(\mathbb{R}^{p,q})$ be the $\Gamma_A$-orbit of $P_A \cap P_A^*$. By Lemmas \ref{lem:light} and \ref{lem:cocompact}, we have that $\mathcal{C}' \subset O_A^{\light} \cap \Omega_{P_A}$. In particular, the action of $\Gamma_A$ on $\mathcal{C}'$ is properly discontinuous, and cocompact since $P_A \cap P_A^*$ is a compact fundamental domain. Let $\mathcal{C}$ be the intersection of all $\Gamma_A$-translates of $P^*_A \cap \Omega_{P_A}$. By Lemma 8.7 of \cite{DGK}, the convex set $\mathcal{C}$ is non-empty. Since $\mathcal{C} \subset \mathcal{C}'$ and $\mathcal{C}$ is closed in $\Omega_{P_A}$, the action of $\Gamma_A$ on $\mathcal{C}$ is also properly discontinuous and cocompact. By Lemma 8.11 of \cite{DGK}, the set $\mathcal{C}$ is closed in $O_A^{\light}$. Using Lemmas 6.3 and 8.10 of \cite{DGK}, the authors complete the proof by showing that $\overline{\mathcal{C}} \setminus \mathcal{C}$ does not contain any non-trivial projective segment.
\end{proof}

\section{Topological actions of reflection groups}

In this section, we recall some facts about actions of reflection groups on manifolds (see Chapters 5 and 10 of \cite{davis_book}).

\medskip

A \emph{mirrored space over $S$} consists of a space $X$, an index set $S$ and a family of closed subspaces $(X_s)_{s\in S}$. We denote the mirrored space simply by $X$. For each $x\in X$, put $S_x=\{ s \in S \mid  x \in X_s \}$. 

\medskip

Suppose $X$ is a mirrored space over $S$ and $(W,S)$ is a Coxeter system. Define an equivalence relation $\sim$ on $W \times X$ by $(g,x) \sim (h,y)$ if and only if $x=y$ and $g^{-1}h \in W_{S_x}$. Give $W \times X$ the product topology and let $\Uc(W,X)$ denote the quotient space:
$$
\Uc(W,X) := W \times X / \sim
$$
We let $[g,x]$ denote the image of $(g,x)$ in $\Uc(W,X)$. The natural $W$-action on $W \times X$ is compatible with the equivalence relation, and so it descends to an action on $\Uc(W,X)$, i.e. $\g \cdot [g,x] = [\g g,x]$. The map $i:X \to \Uc(W,X)$ defined by $x \mapsto [1,x]$ is an embedding and we identify $X$ with its image under $i$. Note that $X$ is a \emph{strict fundamental domain} for the $W$-action on $\Uc(W,X)$, i.e. it intersects each $W$-orbit in exactly one point.

\medskip

An involution on a connected manifold $M$ is a \emph{topological reflection} if its fixed set separates $M$. Suppose that a group $W$ acts properly on a connected manifold $M$ and that it is generated by topological reflections. Let $R$ be the set of all topological reflections in $W$. For each $r \in R$, the fixed set of $r$, denoted $H_r$, is the \emph{wall} associated to $r$. The closure of a connected component of
 $ M \smallsetminus \bigcup_{r \in R} H_r$ is a \emph{chamber} of $W$. A wall $H_r$ is a \emph{wall of a chamber} $D$ if there is a point $x \in D \cap H_r$ such that $x$ belongs to no other wall. Fix a chamber $D$ and let
  $$S = \{ r \in R \mid H_r \textrm{ is a wall of } D\}.$$ 

\begin{theorem}[Proposition 10.1.5 of \cite{davis_book}]\label{thm:universal}
Suppose that $W$ acts properly on a connected manifold $M$ as a group generated by topological reflections. With the notation above, the following hold:
\begin{itemize}
\item the pair $(W,S)$ is a Coxeter system;
\item the natural $W$-equivariant map $\Uc(W,D) \to M$, induced by the inclusion $D \hookrightarrow M$, is a homeomorphism.
\end{itemize}
\end{theorem}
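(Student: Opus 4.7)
My plan is to prove the two assertions essentially in parallel, with the analysis of walls and chambers feeding into both. The setup I rely on throughout is that because each $r \in R$ is a topological reflection on a manifold, its fixed wall $H_r$ is a closed, locally flat codimension-$1$ submanifold that separates $M$; because the $W$-action is proper, the family $\{H_r\}_{r \in R}$ is locally finite, so chambers are genuine closed topological manifolds with boundary, and every point of $M$ lies in some chamber.

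First I would show that $W$ is generated by $S$ and acts simply transitively on chambers. Pick a point $x_0$ in the interior of $D$ and any $w \in W$; because $M$ is connected and walls are locally finite, I can choose a path $\gamma : [0,1] \to M$ from $x_0$ to $w\cdot x_0$ that crosses walls only transversely and one at a time. A gallery $D=D_0,D_1,\dots,D_k=wD$ is obtained, with $D_i$ and $D_{i+1}$ separated by a wall $H_{r_i}$. An inductive argument then shows $r_i$ is conjugate by a word in $S$ to some $s_i \in S$, and that $w=s_1\cdots s_k$; the key inductive step uses that $D_i = (s_1\cdots s_{i-1})\cdot D$, so $H_{r_i} = (s_1\cdots s_{i-1})\cdot H_{s_i}$ for some $s_i \in S$, by looking at which wall of $D$ is carried onto $H_{r_i}$. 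Simple transitivity then follows because no non-trivial element of $W$ can fix all of $D$ (its fixed set would have codimension at least $1$ meeting $\accentset{\circ}{D}$, contradicting properness).

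Next I would identify $(W,S)$ as a Coxeter system. Define the matrix $M$ by $M_{st} = \mathrm{ord}(st)$; the surjection $\hat{W}_M \twoheadrightarrow W$ is immediate. The content is injectivity, i.e.\ no hidden relations. For this I analyze the stabilizer of a point $x \in D$. Using properness plus the local model of a topological reflection, a neighborhood of $x$ in $M$ admits a local section of the $W_{S_x}$-action, and a classical local-to-global argument (essentially: the local isotropy group at $x$ acts on a small slice as a finite group generated by reflections whose walls all contain $x$) shows $\mathrm{Stab}_W(x)=W_{S_x}$. Taking $x$ generic in a codimension-$2$ face $H_s \cap H_t$ of $D$, the local slice is a $2$-disk on which the stabilizer acts as a finite dihedral group of order $2M_{st}$ generated by $s,t$. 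Thus all relations among generators in the stabilizer of a codim-$2$ face are Coxeter relations. A filling argument—any loop in the gallery graph can be homotoped, via the simply connected slices around codim-$2$ strata, to a concatenation of dihedral loops—promotes this to a global presentation and yields the Coxeter relations with no extras.

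Finally, for the homeomorphism, define $\phi : \mathcal{U}(W,D)\to M$ by $\phi([g,x])=g\cdot x$. Well-definedness uses $\mathrm{Stab}_W(x)=W_{S_x}$ for $x \in D$; continuity and $W$-equivariance are immediate. Surjectivity follows since $D$ meets every $W$-orbit. Injectivity: if $gx=hy$ with $x,y\in D$, then $x,y$ lie in a common orbit, hence in a common chamber (namely $D$), and simple transitivity on chambers with the stabilizer computation forces $x=y$ and $g^{-1}h\in W_{S_x}$. For the topology, $\phi$ is a local homeomorphism: a small neighborhood $V$ of $x\in D$ meeting only the walls $H_s$ for $s\in S_x$ has image $W_{S_x}\cdot V$, which coincides via the equivariant gluing with the image of a neighborhood of $[1,x]$ in $\mathcal{U}(W,D)$. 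A continuous, bijective local homeomorphism is a homeomorphism. The main obstacle, as indicated, is the Coxeter-relations step: verifying that the purely topological hypotheses (topological reflections, properness, no smooth structure) suffice for the local dihedral model at codim-$2$ faces, which is what forces the presentation to have exactly the Coxeter relations and nothing more.
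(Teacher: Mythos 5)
The paper does not prove this statement; it is quoted directly from Davis's book (Proposition 10.1.5), so the comparison is with the standard proof given there. Your outline has a genuine gap at the decisive step, and you have correctly located it yourself: the ``local dihedral model'' at codimension-$2$ faces. In the purely topological setting of the theorem there is no slice theorem for a proper action of a discrete group on a manifold, the fixed set of a topological reflection need not be a locally flat codimension-$1$ submanifold (it is only a closed separating set; even its nowhere-density is not free and classically rests on Newman's theorem), and $H_s\cap H_t$ need not be a codimension-$2$ stratum with a $2$-disk transversal on which the stabilizer acts as a dihedral group of order $2M_{st}$. Consequently the claims that ``a neighborhood of $x$ admits a local section of the $W_{S_x}$-action,'' that the local slice at a generic point of $H_s\cap H_t$ is a $2$-disk with a dihedral action, and the subsequent ``filling argument'' (which would in any case have to contend with $\pi_1(M)\neq 1$) are unsubstantiated. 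Since both the verification of the Coxeter relations and your computation $\mathrm{Stab}_W(x)=W_{S_x}$ (hence the injectivity and local-homeomorphism parts of $\Uc(W,D)\to M$) rest on this local model, the proof does not go through as written.

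The actual proof avoids local structure entirely and is combinatorial: from the single global fact that each wall $H_r$ separates $M$ into two components interchanged by $r$, one records for each chamber $wD$ which side of $H_s$ it lies on and verifies Tits's half-space criterion for Coxeter systems (Davis, Theorem 3.3.4, equivalent to the Exchange/Deletion Condition); the key observation is that if the adjacent chambers $wD$ and $wtD$ lie on opposite sides of $H_s$, then $H_s$ must be the wall separating them, which forces $s=wtw^{-1}$. The homeomorphism $\Uc(W,D)\to M$ and the identity $\mathrm{Stab}_W(x)=W_{S_x}$ then come out of the general theory of the basic construction (Davis, Chapters 5 and 10.1) rather than going in as local input. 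Your gallery-connectedness and generation arguments are essentially sound (modulo replacing ``transverse crossing'' of walls by a separation argument), but the heart of the theorem requires the combinatorial route.
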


\begin{theorem}[Proposition 10.9.7 of \cite{davis_book}]\label{thm:must_be_reflection}
Suppose a Coxeter group $W$ acts faithfully, properly and cocompactly on a contractible manifold $M$. Then $W$ acts on $M$ as a group generated by topological reflections.
\end{theorem}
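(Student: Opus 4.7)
The plan is to show that each Coxeter generator $s \in S$ acts on $M$ as a topological reflection; since $S$ generates $W$, this will immediately give the conclusion. Because $W$ acts faithfully and each $s$ has order $2$, each $s \in S$ is a non-trivial involution of the contractible manifold $M$. Write $M^s := \mathrm{Fix}(s)$.

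First I would invoke Smith theory for $\mathbb{Z}/2$-actions: since $M$ is $\mathbb{Z}/2$-acyclic (being contractible) and $s$ has order $2$, the fixed set $M^s$ is also $\mathbb{Z}/2$-acyclic, and in particular it is non-empty and connected. By Newman's theorem combined with the local linearizability of $\mathbb{Z}/2$-actions on topological manifolds, the action of $s$ is locally conjugate near any $x \in M^s$ to a linear orthogonal involution of $\mathbb{R}^n$, so $M^s$ is a locally flat topological submanifold of some codimension $k_x \geq 1$.

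The main step, and the principal obstacle, is to show that $k_x = 1$ at every point. For each $x \in M^s$, the isotropy group $W_x$ is a finite subgroup of $W$, which by a classical theorem of Tits is itself a finite Coxeter group on a canonical generating set consisting of reflections. Using cocompactness of the $W$-action, which makes the family of conjugate fixed sets $\{ M^{wsw^{-1}} : w \in W \}$ locally finite near $x$, I would match the local chamber decomposition produced by $W_x$ on a small invariant disk around $x$ with its canonical linear reflection representation; forcing the two to agree yields $k_x = 1$ and shows that the local action of $s$ at $x$ is by a hyperplane reflection. This local-rigidity step is where contractibility, cocompactness, and the full Coxeter combinatorics of $W$ all enter essentially: contractibility gives $M^s$ non-empty and connected, cocompactness provides the local finiteness and the finite isotropy, and the Coxeter combinatorics identify the local model.

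Finally, with $M^s$ a closed, connected, codimension-one, locally flat submanifold of the contractible manifold $M$, $\mathbb{Z}/2$-Alexander duality applied to the pair $(M, M \smallsetminus M^s)$ yields $|\pi_0(M \smallsetminus M^s)| = 2$, so $M^s$ separates $M$. Hence every $s \in S$ is a topological reflection in the sense of the definition preceding Theorem \ref{thm:universal}, and $W$ acts as a group generated by topological reflections, completing the proof.
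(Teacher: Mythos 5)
This statement is not proved in the paper at all: it is imported as Proposition 10.9.7 of \cite{davis_book}, so the only proof to measure your sketch against is the one in Davis's book. Your outer skeleton is the standard one (Smith theory to show $\mathrm{Fix}(s)$ is non-empty and $\mathbb{Z}/2$-acyclic; a duality argument to get separation once the codimension is known to be one), but the middle of the argument has two genuine problems. First, the appeal to ``local linearizability of $\mathbb{Z}/2$-actions on topological manifolds'' is not a valid tool: involutions of topological manifolds need not be locally linear (Bing's involution of $\mathbb{S}^3$ fixes a wildly embedded $2$-sphere), and Newman's theorem only excludes fixed sets with non-empty interior. What Smith theory actually yields is that $\mathrm{Fix}(s)$ is a $\mathbb{Z}/2$-acyclic $\mathbb{Z}/2$-cohomology manifold of some dimension $m\leq n-1$; this is enough for the final duality step (which does not need local flatness), but it gives you neither a locally flat submanifold nor a local orthogonal model around points of $\mathrm{Fix}(s)$.

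Second, and more seriously, the step you yourself flag as ``the main step'' --- forcing $m=n-1$ --- is not actually carried out. Even granting a local linear model at $x\in\mathrm{Fix}(s)$, a faithful orthogonal representation of the finite isotropy group $W_x$ need not send $s$ to a hyperplane reflection: already $\langle s\rangle\cong\mathbb{Z}/2$ acts orthogonally, faithfully and effectively on $\mathbb{R}^n$ with fixed set of any codimension between $1$ and $n$, so ``matching the local chamber decomposition with the canonical reflection representation'' has no forcing power on its own. The codimension-one statement can only come from the global hypotheses (properness, cocompactness, contractibility of $M$, and the structure of the whole group $W$), and your proposal gestures at this but supplies no mechanism; this is exactly where the cited proof in \cite{davis_book} does its real work, using global (co)homological input about the proper cocompact action on the acyclic manifold rather than a local analysis. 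As written, the proposal therefore has a gap at the heart of the theorem, and the one concrete tool it introduces to bridge that gap (local linearization) is false in the topological category.
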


\begin{theorem}[Main Theorem of Charney--Davis \cite{charney_davis_when}]\label{thm:fund_gen_conj}
Suppose a Coxeter group $W$ admits a faithful, proper and cocompact action on a contractible manifold. If $S$ and $S'$ are two fundamental sets of generators for $W$, then there is a unique element $w \in W$ such that $S' = w S w^{-1}$.
\end{theorem}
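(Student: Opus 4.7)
The plan is to reduce both fundamental generating sets to sets of wall reflections for the topological reflection action on $M$, and then to appeal to the simple transitivity of $W$ on chambers. By Theorem \ref{thm:must_be_reflection}, $W$ acts on $M$ as a group generated by topological reflections. Fix a chamber $D_0$ and let $S_0$ be the set of reflections across its walls; by Theorem \ref{thm:universal}, $(W, S_0)$ is a Coxeter system, so $S_0$ is one fundamental generating set.

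The heart of the proof consists in showing that an arbitrary fundamental generating set $S$ for $W$ also has this form: each $s \in S$ is a topological reflection across a wall, and $S$ is precisely the set of wall reflections of some chamber $D$. For a given $s \in S$, properness combined with Smith theory for $\mathbb{Z}/2$-actions on manifolds forces $\Fix(s) \subset M$ to be a topological submanifold. Using that $W$ is generated by the involutions in $S$, that $M$ is contractible, and the cocompactness of the action, one argues that $\Fix(s)$ has codimension one and separates $M$; thus $s$ is a topological reflection. A combinatorial argument then identifies a unique chamber $D$ whose wall-reflection set is exactly $S$: compare the rank and Coxeter matrix of $(W, S)$ with those of the wall-reflection set of a suitable component of $M \smallsetminus \bigcup_{s \in S} \Fix(s)$.

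Since $W$ acts simply transitively on the set of chambers of the reflection system (a direct consequence of Theorem \ref{thm:universal}), there is a unique element $w \in W$ with $w D_0 = D$, and for this $w$ we have $S = w S_0 w^{-1}$. Repeating the argument for $S'$ produces a unique $w'$ with $S' = w' S_0 w'^{-1}$; then $w' w^{-1}$ conjugates $S$ to $S'$, and uniqueness of the conjugator follows because any element conjugating $S$ to $S'$ must send the chamber of $S$ to the chamber of $S'$, and simple transitivity forbids two such elements.

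The main obstacle I foresee is the assertion that the fixed set of each $s \in S$ is a single wall. A priori, an abstract involution of $W$ could have fixed locus with several components or of mixed codimension, and ruling this out requires combining Coxeter-theoretic input (an involution should be $W$-conjugate to a fundamental generator precisely when its fixed set in the Davis realization is a wall), manifold topology (Smith theory to control the local structure of $\Fix(s)$), and the dynamical properties of the action (properness and cocompactness).
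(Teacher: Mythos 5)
First, a point of comparison: the paper does not prove this statement at all. It is quoted as the Main Theorem of Charney--Davis \cite{charney_davis_when} and used as a black box (in the proof of Lemma \ref{lem:must_be_cartan1}), so there is no in-paper argument to measure yours against; what you have written has to stand on its own as a proof of the Charney--Davis theorem.

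Judged that way, your outline follows the right overall strategy --- the one Charney--Davis actually implement --- but the two steps that carry all of the difficulty are asserted rather than proved, and one of them is circular as written. (1) The claim that every element of an \emph{arbitrary} fundamental set of generators $S$ is a topological reflection whose fixed set is a single wall, and that $S$ is precisely the wall-reflection set of one chamber, is essentially the entire content of the theorem. Note that the chambers of Theorem \ref{thm:universal} are components of the complement of the walls of \emph{all} topological reflections $R$ in $W$, whereas your combinatorial step only considers $M \smallsetminus \bigcup_{s \in S} \mathrm{Fix}(s)$, whose components are a priori unions of chambers; matching the two requires reflection rigidity, i.e.\ that the set of $(W,S)$-reflections $\{wsw^{-1} : w \in W,\ s \in S\}$ coincides with $R$. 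That is a nontrivial intermediate theorem of \cite{charney_davis_when}, and gesturing at Smith theory does not supply it --- you flag this yourself as ``the main obstacle,'' which is an accurate self-assessment. (2) The uniqueness argument presupposes that a chamber is determined by its wall-reflection set: if $vSv^{-1} = S$ with $v \neq 1$, then $v$ carries the chamber $D$ to the distinct chamber $vD$, which has the same wall-reflection set, so ``simple transitivity forbids two such elements'' is exactly the statement to be proved rather than a consequence of simple transitivity. Uniqueness genuinely uses the manifold hypothesis (for a finite Coxeter system the longest element can satisfy $w_0 S w_0^{-1} = S$), and your sketch gives no argument that exploits it at this point.
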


\begin{lemma}\label{lem:must_be_cartan1}
Let $(W,S)$ be a Coxeter system, and let $\tau : W \to \mathrm{SL}^{\pm}(V)$ be a faithful representation. Suppose that $\tau(W)$ acts properly discontinuously and cocompactly on some properly convex open subset $\O$ of $\mathbb{S}(V)$. Then the following hold:
\begin{enumerate}
\item for each $s \in S$, the image $\tau(s)$ of $s$ is a projective reflection of $\mathbb{S}(V)$;
\item the group $\tau(W)$ is a projective Coxeter group generated by reflections $(\tau(s))_{s\in S}$.
\end{enumerate}
\end{lemma}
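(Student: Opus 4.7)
The plan is to apply the topological-reflection-group machinery of the previous subsection to the action of $W$ on the contractible manifold $\Omega$, and then use the linear structure of $\tau$ to upgrade each topological reflection to a projective one.

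Since $\Omega$ is a properly convex open subset of $\mathbb{S}(V)$, it is homeomorphic to an open ball, hence a contractible connected manifold. Identifying $W$ with $\tau(W)$ via the faithful representation $\tau$, we obtain a faithful, proper and cocompact action of the Coxeter group $W$ on a contractible manifold, so Theorem~\ref{thm:must_be_reflection} applies: $W$ acts on $\Omega$ as a group generated by topological reflections. Theorem~\ref{thm:universal} then produces a chamber $D \subset \Omega$ together with a fundamental set of generators $S' \subset W$ consisting of topological reflections, whose walls are the fixed sets $H_{s'} = \Fix(\tau(s')) \cap \Omega$.

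The key step is to show that each $\tau(s')$ for $s' \in S'$ is a projective reflection. Since $\tau$ is faithful and $s'$ has order $2$, the element $\tau(s')$ is a non-trivial involution of $V$, so $V = V^+ \oplus V^-$ splits into its $(\pm 1)$-eigenspaces. A direct computation shows that the fixed set of $\tau(s')$ on $\mathbb{S}(V)$ is exactly $\mathbb{S}(V^+)$: for $v \in V^-$ the ray $[v]$ is sent to the antipodal ray $[-v] \neq [v]$, so only points of $\mathbb{S}(V^+)$ are fixed. Because $\Omega$ is open in $\mathbb{S}(V)$, the intersection $\mathbb{S}(V^+) \cap \Omega$ is either empty or a submanifold of codimension $\dim V^-$ in $\Omega$; the topological reflection hypothesis asks that it be non-empty and separate $\Omega$, but a closed submanifold of codimension at least $2$ cannot separate a connected manifold. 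Hence $\dim V^- = 1$, which is precisely the defining condition for $\tau(s')$ to be a projective reflection. To descend to the original $S$, note that $(W,S)$ and $(W,S')$ are two Coxeter systems for $W$, so by Theorem~\ref{thm:fund_gen_conj} there exists $w \in W$ with $S = w^{-1} S' w$; every $\tau(s)$ is therefore a conjugate of some $\tau(s')$ and itself a projective reflection, proving (1).

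For part (2), the chamber $D$ is convex in $\Omega$ (any segment joining two points of $D$ lies in $\Omega \setminus \bigcup_{r} H_r$ and therefore in $D$), and proper discontinuity together with cocompactness force $D$ to be compact and $S'$ to be finite. Setting $P := \bigcap_{s' \in S'} H_{s'}^+$, with $H_{s'}^+$ the closed projective half-space containing $D$, one checks that $P$ is a properly convex projective polytope with non-empty interior whose facet reflections are $(\tau(s'))_{s' \in S'}$; the tiling of $\Omega$ by the $\tau(W)$-translates of $D$ with pairwise disjoint interiors then shows that $(P, (\tau(s'))_{s' \in S'})$ is a Coxeter polytope in the sense of Section~4.2, so $\tau(W) = \Gamma_P$. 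Transporting by $\tau(w)^{-1}$, i.e.\ using the Coxeter polytope $\tau(w)^{-1} P$ whose facet reflections are exactly $(\tau(s))_{s \in S}$, completes (2). The main obstacle is the eigenspace-versus-separation argument of the third paragraph: bridging the purely topological condition that a fixed set separate $\Omega$ with the algebraic condition $\dim V^- = 1$ is what converts a topological reflection into a projective reflection, and the rest of the proof is essentially bookkeeping around this bridge.
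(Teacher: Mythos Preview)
Your proof is correct and follows essentially the same route as the paper's: apply Theorem~\ref{thm:must_be_reflection} to get a reflection-group action on the contractible manifold $\Omega$, use the codimension argument to upgrade each topological reflection to a projective one, invoke Theorem~\ref{thm:universal} to obtain a Coxeter generating set $S'$ from a chamber, and then conjugate via Theorem~\ref{thm:fund_gen_conj} to return to $S$. The only cosmetic differences are that the paper compresses your eigenspace discussion into the single line ``if $\tau(s)$ is not a projective reflection, then its fixed set has codimension $\geq 2$ and cannot separate $\Omega$'', and that the paper writes the transported chamber directly as $P=\tau(w)\cdot D=\bigcap_{s\in S}\mathbb{S}(\{\alpha_s\leq 0\})$ rather than first building the polytope for $S'$ and then conjugating.
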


\begin{proof}
By Theorem \ref{thm:must_be_reflection}, there exists a set of generators $S' \subset W$ such that for each $s \in S'$ the image $\tau(s)$ of $s$ is a topological reflection, i.e. the fixed set of $\tau(s)$ separates $\O$. We claim that $\tau(s)$ is a projective reflection. Indeed, since $\tau(s)$ is an automorphism of $\mathbb{S}(V)$, if $\tau(s)$ is not a projective reflection, then the fixed set of $\tau(s)$ is of codimension $\geqslant 2$, hence does not separate $\O$: impossible.

\medskip

Let $R$ be the set of all reflections in $\tau(W)$. For each $r \in R$, we denote by $H_r$ the wall associated to $\tau(r)$, which is a hyperplane in $\O$. Fix a chamber $D$ of $\tau(W)$, which is a closed connected subset of $\O$, and let
  $$S'' = \{ r \in R \mid H_r \textrm{ is a wall of } D\}.$$

By Theorem \ref{thm:universal}, we have that $(W,S'')$ is a Coxeter system and that the natural $W$-equivariant map $\Uc(W,D) \to \O$ is a homeomorphism. Since the group $W$ acts faithfully, properly and cocompactly on $\O$, by Theorem \ref{thm:fund_gen_conj}, there is a unique element $w \in W$ such that $S = w S'' w^{-1}$. Thus, for each $s \in S$, the image $\tau(s)$ of $s$ is a projective reflection of $\mathbb{S}(V)$, and the strict fundamental domain $P := \tau(w) \cdot D$ is bounded by the fixed hyperplanes of the reflections $\tau(s)$ for $s \in S$. In other words, each automorphism $\tau(s)$ of $\mathbb{S}(V)$ can be written as $\tau(s) = \mathrm{Id}-\alpha_s  \otimes b_s$ for some linear form $\alpha_s \in V^*$ and some vector $b_s \in V$ so that $\alpha_s(b_s)=2$ and 
$$ P = \bigcap_{s \in S} \mathbb{S}(\{ x \in V \mid \alpha_s(x) \leqslant 0 \}). $$
Therefore, the pair of the polytope $P$ and the set of reflections $(\tau(s))_{s \in S}$ is a Coxeter polytope since $P$ is the (strict) fundamental domain for the $\tau(W)$-action on $\O$.
\end{proof}

\section{Proof of Theorems \ref{thm:tits_rep} and \ref{thm:tits_rep_H} }

In the following theorem, we use the same notation as in Section \ref{section:Tits}. 

\begin{theorem}\label{thm:fusion}
If $W$ is a Coxeter group with Coxeter diagram in Tables \ref{examples_dim4}, \ref{examples_dim5}, \ref{examples_dim6}, \ref{examples_dim7}, \ref{examples_dim8}, then the following hold:

\begin{itemize}
\item the group $W$ is Gromov-hyperbolic and its Gromov boundary is homeomorphic to $\mathbb{S}^{d-1}$;

\item the Cosine matrix $\mathsf{C}_W$ of $W$ satisfies the conditions $(\mathsf{H}_0)$ and $(\mathsf{H}_-)$;

\item the symmetric matrix $\mathsf{C}_W$ is of signature $(d,2,0)$;

\item the Tits representation $\sigma_{\mathsf{C}_W} :  W \to \mathrm{O}(B_{\mathsf{C}_W}) \simeq \mathrm{O}_{d,2}(\mathbb{R})$ is strictly GHC-regular. 
\end{itemize}
\end{theorem}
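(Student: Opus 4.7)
The plan is to assemble the four bullets from tools already established in the paper, using Theorem \ref{thm:dgk} as the engine that delivers convex cocompactness, so the only genuinely new work lies in verifying the two hypotheses $(\mathsf{H}_0)$ and $(\mathsf{H}_-)$ for $\mathsf{C}_W$. The first bullet is Corollary \ref{cor:davis_details} and the third bullet is Proposition \ref{prop:signature}, both proved earlier.

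To check $(\mathsf{H}_0)$, I would translate the condition via Proposition \ref{prop:usefull}: for $T \subset S$, the matrix $\mathsf{C}_{W_T}$ is of zero type if and only if $W_T$ is affine. Every irreducible affine Coxeter component has rank at least $2$, and the only rank-$2$ irreducible affine component is $\tilde{A}_1$, which appears precisely as an edge of label $\infty$. A direct inspection of the diagrams in Tables \ref{examples_dim4}, \ref{examples_dim5}, \ref{examples_dim6}, \ref{examples_dim7}, \ref{examples_dim8} shows that none contains an $\infty$-labeled edge, thereby excluding any $\tilde{A}_1$ subsystem. Since $W$ is Gromov-hyperbolic by Corollary \ref{cor:davis_details}, Moussong's criterion (Theorem \ref{thm:moussong}) rules out any affine subset of rank $\geqslant 3$. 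Together, no $T \subset S$ yields $W_T$ affine, so $(\mathsf{H}_0)$ holds.

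For $(\mathsf{H}_-)$, Proposition \ref{prop:cartan_type} combined with Proposition \ref{prop:usefull} shows that an irreducible component of $\mathsf{C}_{W_T}$ is of negative type exactly when the corresponding irreducible component of $W_T$ is large, i.e.\ neither spherical nor affine. If there were orthogonal $T, U \subset S$ with $\mathsf{C}_{W_T}$ and $\mathsf{C}_{W_U}$ of negative type, then each would contain a large (hence non-spherical) irreducible sub-subset, yielding two orthogonal non-spherical subsets of $S$ and directly contradicting the second clause of Moussong's criterion. So $(\mathsf{H}_-)$ also follows from Gromov-hyperbolicity.

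For the fourth bullet, with signature $(d,2,0)$ supplied by the third bullet and the Coxeter diagrams in the tables being connected (so $\mathsf{C}_W$ is irreducible), Theorem \ref{thm:dgk} applies and delivers that $\sigma_{\mathsf{C}_W}(W) \subset \mathrm{O}(B_{\mathsf{C}_W}) \simeq \mathrm{O}_{d,2}(\mathbb{R})$ is $\mathbb{H}^{d,1} = \mathrm{AdS}^{d,1}$-convex cocompact. The Tits representation is discrete and faithful by Theorem \ref{theo_vinberg}, and passing to $\mathrm{PO}_{d,2}(\mathbb{R})$ preserves these properties. Combining $\mathrm{AdS}^{d,1}$-convex cocompactness with the Gromov-hyperbolicity of $W$ and the identification $\partial W \cong \mathbb{S}^{d-1}$ (both from Corollary \ref{cor:davis_details}), the characterization of strictly GHC-regular representations recalled in the introduction yields the fourth bullet. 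The main obstacles are purely bookkeeping: confirming by inspection that every diagram in the tables is connected and has no $\infty$-labeled edge, and keeping the distinction between "large" and "non-spherical" straight when matching $(\mathsf{H}_-)$ against Moussong's criterion.
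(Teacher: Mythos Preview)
Your proposal is correct and follows essentially the same route as the paper's proof: invoke Corollary~\ref{cor:davis_details} and Proposition~\ref{prop:signature} for the first and third bullets, deduce $(\mathsf{H}_0)$ and $(\mathsf{H}_-)$ from the absence of $\infty$-labels together with Moussong's criterion (via Proposition~\ref{prop:usefull}), and then apply Theorem~\ref{thm:dgk} to obtain $\mathrm{AdS}^{d,1}$-convex cocompactness, which combined with $\partial W \cong \mathbb{S}^{d-1}$ gives strict GHC-regularity. Your write-up is simply more explicit than the paper's in unpacking why no $\infty$-edge plus hyperbolicity forces $(\mathsf{H}_0)$ and $(\mathsf{H}_-)$, and in noting that connectedness of the diagrams gives the irreducibility hypothesis of Theorem~\ref{thm:dgk}.
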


\begin{proof}
By Corollary \ref{cor:davis_details}, the Coxeter group $W$ is Gromov-hyperbolic and the Gromov boundary of $W$ is a $(d-1)$-dimensional sphere. By Proposition \ref{prop:usefull}, the Cosine matrix $\mathsf{C}_W$ of $W$ satisfies the hypotheses $(\mathsf{H}_0)$ and $(\mathsf{H}_-)$ because the Coxeter diagram $\mathsf{G}_W$ of $W$ has no edge of label $\infty$ and the Coxeter group $W$ satisfies Moussong's hyperbolicity criterion (Theorem \ref{thm:moussong}), namely $S$ does not contain any affine subset of rank at least $3$ nor two orthogonal non-spherical subsets. By Proposition \ref{prop:signature}, the signature of $W$ is $(d,2,0)$, and so the negative open set $O_{\mathsf{C}_W}^{\light}$ of $\mathsf{C}_W$ and the orthogonal group $\mathrm{O}(B_{\mathsf{C}_W})$ may be identified with $\mathrm{AdS}^{d,1}$ and $\mathrm{O}_{d,2}(\mathbb{R})$, respectively. By Theorem \ref{thm:dgk}, the subgroup $\sigma_{\mathsf{C}_W}(W)$ of $\mathrm{O}_{d,2}(\mathbb{R})$ is $\mathrm{AdS}^{d,1}$-convex cocompact, hence the representation $\sigma_{\mathsf{C}_W}$ is strictly GHC-regular because the Gromov boundary of $W$ is homeomorphic to $ \mathbb{S}^{d-1}$.
\end{proof}

Applying the same method as for Theorem \ref{thm:fusion}, we can prove:

\begin{theorem}\label{thm:Hfusion}
If $W$ is a Coxeter group with Coxeter diagram in Table \ref{Hexamples_dim4}, then the following hold:

\begin{itemize}
\item the group $W$ is Gromov-hyperbolic and its Gromov boundary is homeomorphic to $\mathbb{S}^{3}$;

\item the symmetric matrix $\mathsf{C}_W$ is of signature $(5,1,0)$;

\item the Tits representation $\sigma_{\mathsf{C}_W} :  W \to \mathrm{O}(B_{\mathsf{C}_W}) \simeq \mathrm{O}_{5,1}(\mathbb{R})$ is quasi-Fuchsian. 
\end{itemize}
\end{theorem}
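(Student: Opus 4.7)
The plan is to mimic the proof of Theorem \ref{thm:fusion} almost verbatim, replacing Proposition \ref{prop:signature} by Proposition \ref{prop:signatureH} and tracking how the different signature changes the target of the Tits representation. Concretely, I would proceed in four short steps.

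First, I would invoke Proposition \ref{prop:nerve_are_sphere} (whose statement already covers Table \ref{Hexamples_dim4}) to conclude that $W$ is an abstract geometric reflection group of dimension $d=4$. Corollary \ref{cor:davis_details} then immediately gives both Gromov-hyperbolicity of $W$ and the identification of its Gromov boundary with $\mathbb{S}^3$, settling the first bullet of the theorem. Second, I would cite Proposition \ref{prop:signatureH} to obtain that the signature of the Cosine matrix $\mathsf{C}_W$ is $(5,1,0)$, which is the second bullet.

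Third, before invoking Theorem \ref{thm:dgk}, I would verify that $\mathsf{C}_W$ satisfies the two hypotheses $(\mathsf{H}_0)$ and $(\mathsf{H}_-)$. Condition $(\mathsf{H}_0)$ says that no principal submatrix $(\mathsf{C}_W)_T$ is of zero type; by Proposition \ref{prop:usefull}(3) this would force $W_T$ to be affine, but since the diagrams in Table \ref{Hexamples_dim4} contain no edge labeled $\infty$ and $W$ satisfies Moussong's criterion (Theorem \ref{thm:moussong}), no subset $T \subset S$ of rank $\geqslant 3$ can be affine, and rank $2$ affine subsets are ruled out by the absence of $\infty$-labels. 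Condition $(\mathsf{H}_-)$ asks that there be no two orthogonal subsets $T, U \subset S$ with $(\mathsf{C}_W)_T$ and $(\mathsf{C}_W)_U$ both of negative type; by Proposition \ref{prop:usefull}(1), being of negative type forces $W_T, W_U$ to be non-spherical, so this is exactly the second clause of Moussong's criterion, again granted by Corollary \ref{cor:davis_details}.

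Fourth, I would apply Theorem \ref{thm:dgk} with $(p,q) = (5,1)$. This yields that $\sigma_{\mathsf{C}_W}(W)$ is $\mathbb{H}^{5,0} = \mathbb{H}^5$-convex cocompact in $\mathrm{O}(B_{\mathsf{C}_W}) \simeq \mathrm{O}_{5,1}(\R)$. Combining this with the identification of the Gromov boundary of $W$ with a $3$-sphere and the definition of $\mathbb{H}^{d+1}$-quasi-Fuchsian (here $d=4$) recorded right after the statement of Theorem \ref{thm:tits_rep_H}, one concludes that $\sigma_{\mathsf{C}_W}$ is quasi-Fuchsian, which is the third bullet. There is essentially no genuine obstacle here: the entire content sits in the two earlier propositions \ref{prop:nerve_are_sphere} and \ref{prop:signatureH}, both of which are stated to include Table \ref{Hexamples_dim4}, and in the fact that Theorem \ref{thm:dgk} is valid for arbitrary signature $(p,q)$ with $p,q \geqslant 1$, not just for $q=2$. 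The only point that deserves a cross-check is that the $\mathbb{H}^{5,0}$-convex cocompact group, which is automatically a convex cocompact subgroup of $\mathrm{Isom}(\mathbb{H}^5)$ in the classical sense, admits a topological $3$-sphere as limit set; this is immediate from the identification between the Gromov boundary of $W$ and the limit set furnished by Theorem \ref{thm:dgk}.
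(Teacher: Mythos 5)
Your proposal is correct and follows exactly the route the paper intends: the paper gives no separate proof of this theorem but states it is obtained "applying the same method as for Theorem \ref{thm:fusion}", which is precisely your plan of combining Corollary \ref{cor:davis_details}, Proposition \ref{prop:signatureH}, the verification of $(\mathsf{H}_0)$ and $(\mathsf{H}_-)$ via Proposition \ref{prop:usefull} and Moussong's criterion, and Theorem \ref{thm:dgk} with $(p,q)=(5,1)$. Your cross-checks (that Propositions \ref{prop:nerve_are_sphere} and \ref{prop:signatureH} explicitly cover Table \ref{Hexamples_dim4}, and that Theorem \ref{thm:dgk} is not restricted to $q=2$) are exactly the right ones.
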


Now, it only remains to show that every Coxeter group $W$ in Tables \ref{Hexamples_dim4}, \ref{examples_dim4}, \ref{examples_dim5}, \ref{examples_dim6}, \ref{examples_dim7}, \ref{examples_dim8} is not quasi-isometric to a uniform lattice of a semi-simple Lie group. First, the properties of $W$ being isomorphic to and quasi-isometric to a uniform lattice of a semi-simple Lie group respectively are almost the same due to the following beautiful theorem:

\begin{theorem}[Tukia \cite{tukia}, Pansu \cite{pansu}, Chow \cite{chow}, Kleiner--Leeb \cite{kleiner_leeb}, Eskin-Farb \cite{eskin_farb}, for an overview see Farb \cite{survey_farb} or Dru\c{t}u--Kapovich \cite{book_kapo_drutu}]
Let $\G$ be a finitely generated group and $X$ a symmetric space of noncompact type without Euclidean factor.\footnote{A \emph{symmetric space of noncompact type without Euclidean factor} means a manifold of the form $G/K$, where $G$ is a semi-simple Lie group without compact factors and $K$ is a maximal compact subgroup.} If the group $\G$ is quasi-isometric to $X$, then there exists a homomorphism $\tau : \G \to \mathrm{Isom}(X)$ with finite kernel and whose image is a uniform lattice of the isometry group $\mathrm{Isom}(X)$ of $X$.
\end{theorem}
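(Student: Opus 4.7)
The plan is to reduce the statement to the identification of the quasi-isometry group $\mathrm{QI}(X)$ (the group of self quasi-isometries of $X$ modulo bounded distance) with the isometry group $\mathrm{Isom}(X)$. Once this identification is in hand, any quasi-isometry $f: \Gamma \to X$ conjugates the left-regular $\Gamma$-action on itself to a uniformly bounded family of quasi-isometries of $X$, yielding a homomorphism $\tau : \Gamma \to \mathrm{QI}(X) \cong \mathrm{Isom}(X)$. Properness of $f$ will force $\ker \tau$ to be finite, and cocompactness will force $\tau(\Gamma)$ to act cocompactly on $X$, hence to be a uniform lattice.

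The bulk of the work is the case-by-case identification of $\mathrm{QI}(X)$. In rank one I would pass to the visual boundary $\partial X$ equipped with its Carnot--Carath{\'e}odory quasiconformal structure: any quasi-isometry of $X$ extends to a quasi-symmetric self-homeomorphism of $\partial X$, and the core analytic step is to upgrade quasi-symmetry to conformality so that the map descends from an isometry of $X$. This is Tukia's theorem for $X = \mathbb{H}^n_{\mathbb{R}}$ (using normal families and measurable conformal structures), Chow's theorem for complex hyperbolic space, and Pansu's theorem for quaternionic hyperbolic space and the Cayley hyperbolic plane, the latter two hinging on Pansu's differentiation theorem for Lipschitz maps between Carnot groups. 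In higher rank I would instead study an asymptotic cone $\mathrm{Cone}_\omega(X)$: Kleiner--Leeb identify it with a thick Euclidean building whose apartment structure is preserved, up to bi-Lipschitz deformation, by ultralimits of quasi-isometries, and building rigidity then produces an honest isometry of $X$. The refinement from quasi-isometric rigidity of $X$ itself to quasi-isometric rigidity of uniform lattices in $\mathrm{Isom}(X)$ is Eskin--Farb's contribution, obtained via the coarse topology of maximal flats to confine the quasi-isometry to a bounded neighborhood of a single lattice orbit.

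The main obstacle is that there is no uniform argument: each geometric family---real, complex, quaternionic, and octonionic rank one, higher-rank irreducible, and reducible symmetric spaces---requires its own deep technical input (Pansu differentiation, building rigidity, coarse flat topology), and stitching the cases into a single statement is essentially a bookkeeping exercise on top of several monographs' worth of machinery. For the present paper we need only invoke the theorem as a black box: combined with the observation that each Coxeter group $W$ of Theorem \ref{thm:specifications} fails to be isomorphic to a uniform lattice in any $\mathrm{Isom}(X)$ (which will follow from incompatibility of its Gromov boundary, virtual cohomological dimension, or similar invariants with the classification of symmetric spaces), it yields the non-quasi-isometry assertion of Theorems \ref{thm:tits_rep} and \ref{thm:tits_rep_H}.
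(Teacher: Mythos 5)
The paper does not prove this theorem at all: it is imported verbatim from the literature, with the attribution in the theorem header serving as the ``proof.'' So there is no argument of the authors to compare yours against; what you have written is a survey of the actual proofs of Tukia, Chow, Pansu, Kleiner--Leeb and Eskin--Farb, and at that level it is broadly accurate. Your final remarks also correctly identify how the theorem is consumed downstream (as a black box reducing ``quasi-isometric to a lattice'' to ``isomorphic to a lattice,'' which the paper then combines with Proposition \ref{prop:paris}, Theorem \ref{thm_folklore} and Proposition \ref{prop:not_lattice} --- though note the paper's actual obstruction in Proposition \ref{prop:not_lattice} is the nondegeneracy of the Cosine matrix, not a boundary or cohomological-dimension invariant).

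One point in your outline is genuinely wrong as stated, and matters because the theorem is asserted for \emph{all} symmetric spaces of noncompact type without Euclidean factor: the reduction to an identification $\mathrm{QI}(X) \cong \mathrm{Isom}(X)$ fails when $X = \mathbb{H}^2$, or when $X$ has de Rham factors isometric to $\mathbb{H}^2$ (or, similarly, several pairwise quasi-isometric factors, where $\mathrm{QI}$ can permute factors that $\mathrm{Isom}$ cannot). For $\mathbb{H}^2$ the quasi-isometry group is the full group of quasi-symmetric homeomorphisms of $S^1$ modulo nothing helpful, which is vastly larger than $\mathrm{PSL}_2(\mathbb{R})$; the theorem in that case is instead the convergence-group theorem of Tukia, Gabai and Casson--Jungreis (which the paper cites separately in its introductory remark on $d=2$). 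Relatedly, even for $\mathbb{H}^n_{\mathbb{R}}$ with $n \geq 3$, Tukia's rigidity is a statement about a \emph{uniformly} quasiconformal group whose induced action on distinct triples of boundary points is cocompact --- there are counterexamples for individual uniformly quasiconformal maps and for non-cocompact groups --- so the ``upgrade quasi-symmetry to conformality'' step must be run on the whole conjugated group action $f \circ L_\Gamma \circ f^{-1}$, not carried out inside $\mathrm{QI}(X)$ element by element. None of this affects the paper, which only ever quotes the theorem, but your proposed uniform mechanism would need these cases patched before it constitutes a proof of the statement as written.
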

Moreover, if in addition $\G$ is an irreducible \emph{large} Coxeter group $W$, \ie $W$ is not spherical nor affine, then the homomorphism $\tau : \G \to \mathrm{Isom}(X)$ must be injective:

\begin{proposition}[Assertion 2 of the proof of Proposition 4.3 of Paris \cite{paris}]\label{prop:paris} Let $W$ be an irreducible large Coxeter group. If $N$ is a normal subgroup of $W$ then $N$ is infinite.
\end{proposition}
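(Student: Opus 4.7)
The plan is to argue by contradiction, using the Tits representation of $W$. Suppose $N \triangleleft W$ is a non-trivial normal subgroup and assume, towards a contradiction, that $N$ is finite. Since $W$ is irreducible and large, it is neither spherical nor affine, so the Tits representation $\rho : W \to \mathrm{GL}(V_{\mathsf{C}_W})$ from Section \ref{section:Tits} is faithful (Theorem \ref{theo_vinberg}) and irreducible as an $\mathbb{R}$-linear representation (the classical fact from Bourbaki, Ch.~V, that the canonical representation of an irreducible non-affine Coxeter group is $\mathbb{R}$-irreducible).

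First I would consider the subspace $V^N \subset V_{\mathsf{C}_W}$ of $N$-fixed vectors. Because $N$ is normal, $V^N$ is $W$-stable: for $v \in V^N$, $w \in W$, $n \in N$, one has $\rho(n)\rho(w)v = \rho(w)\rho(w^{-1}nw)v = \rho(w)v$ since $w^{-1}nw \in N$. Irreducibility of $\rho$ then forces $V^N = 0$ or $V^N = V_{\mathsf{C}_W}$. In the second case $\rho(N) = \{\mathrm{Id}\}$, so faithfulness gives $N = \{1\}$, contradicting non-triviality.

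The remaining case $V^N = 0$ is where the main work lies. I would first reduce to the case where $N$ is abelian by replacing it with a non-trivial characteristic abelian subgroup (e.g.\ $Z(N)$ if non-trivial, or the last non-trivial term of the derived series); any such subgroup is characteristic in $N$, hence normal in $W$. With $N$ finite abelian, $\rho(N)$ is a commuting family of finite-order automorphisms of $V_{\mathsf{C}_W}$, hence simultaneously diagonalizable over $\mathbb{C}$, yielding a character decomposition $V_{\mathsf{C}_W}\otimes_\mathbb{R}\mathbb{C} = \bigoplus_\chi V_\chi$. Normality of $N$ forces $W$ to permute the characters $\chi$, with the stabilizer of each $V_\chi$ of finite index in $W$.

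To close, I would exploit that each simple reflection $\rho(s)$ fixes a hyperplane of $V_{\mathsf{C}_W}$, which severely restricts how $W$ can permute the $V_\chi$: combining this constraint with the $W$-permutation action, some union of eigenspaces descends to a proper non-trivial real $W$-invariant subspace of $V_{\mathsf{C}_W}$, contradicting irreducibility. Equivalently, one can phrase the argument as showing that any non-trivial element of $N$, having $W$-conjugacy class contained in the finite set $N$, would centralise a finite-index subgroup of $W$, while an irreducible large Coxeter group has trivial virtual centre. The main obstacle I expect is precisely this last step---namely, extracting from the character decomposition an honest $W$-invariant real subspace, or, equivalently, ruling out non-trivial finite conjugacy classes in $W$; the earlier invariant-subspace observation is automatic, but the triviality of the virtual centre of an irreducible large Coxeter group requires genuine input from either the geometry of the Davis complex or a finer analysis of the Tits action on roots.
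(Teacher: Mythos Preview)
The paper does not give its own proof of this proposition; it is quoted from Paris and used as a black box. So there is nothing in the paper to compare your argument against directly.

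Your setup is correct: for an irreducible large $W$ the Tits representation $\sigma_{\mathsf{C}_W}$ on $V_{\mathsf{C}_W}$ is faithful (Theorem~\ref{theo_vinberg}) and irreducible, and the dichotomy $V^N \in \{0, V_{\mathsf{C}_W}\}$ together with the case $V^N = V_{\mathsf{C}_W}$ is handled cleanly. The genuine gap is your treatment of the case $V^N = 0$. Both routes you sketch --- the character decomposition and the virtual-centre argument --- are left incomplete, and you are right that neither is a one-liner along those lines.

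There is, however, a much simpler way to close this case using only what is already set up in the paper. Since $W$ is irreducible and large, the Cosine matrix $\mathsf{C}_W$ is of negative type, so by Lemma~15 of Vinberg \cite{bible} (invoked in the proof of Theorem~\ref{thm:dgk}) the set $\Omega_{P_{\mathsf{C}_W}}$ of Theorem~\ref{theo_vinberg} is \emph{properly} convex. Lift it to a $W$-invariant convex cone $\tilde\Omega \subset V_{\mathsf{C}_W} \setminus \{0\}$. The finite group $N$ preserves $\tilde\Omega$, so for any $x_0 \in \tilde\Omega$ the average $\bar x = |N|^{-1}\sum_{n\in N}\rho(n)x_0$ lies in $\tilde\Omega$ (by convexity) and is $N$-fixed. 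Since $0 \notin \tilde\Omega$, this gives $0 \neq \bar x \in V^N$, contradicting $V^N = 0$. No character theory and no virtual-centre input are needed.
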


Thus, an irreducible large Coxeter group $W$ is quasi-isometric to a uniform lattice of a semi-simple Lie group $G$ if and only if it is isomorphic to a uniform lattice of $G$. Even more, since $W$ is a Coxeter group, this problem reduces to being isomorphic or not to a uniform lattice of the isometry group of real hyperbolic space:

\begin{theorem}[Folklore]\label{thm_folklore} Let $W$ be an irreducible large Coxeter group and $X$ a symmetric space of noncompact type without Euclidean factor. If $W$ is isomorphic to a uniform lattice of $\mathrm{Isom}(X)$, then $X$ is a real hyperbolic space.
\end{theorem}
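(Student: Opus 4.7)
The plan is to rule out each symmetric space $X$ of noncompact type without Euclidean factor other than real hyperbolic space. Since $W$ is (by hypothesis) isomorphic to a uniform lattice $\Gamma$ in $\mathrm{Isom}(X)$, and $W$ is infinite (being large), Milnor--Švarc gives a quasi-isometry between $W$ and $X$. I will distinguish three regimes: (i) $\mathrm{Isom}(X)$ has Kazhdan's property (T), (ii) $X$ is reducible, and (iii) $X$ is complex hyperbolic $\mathbb{H}^n_{\mathbb{C}}$.

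For regime (i), I would use that every Coxeter group acts properly on a CAT(0) cube complex (Niblo--Reeves) and therefore has the Haagerup property, so no infinite quotient of $W$ has property~(T). On the other hand, by Kazhdan's theorem together with Kostant's results, the isometry group of any irreducible symmetric space of real rank $\geqslant 2$, and of the rank-one spaces $\mathbb{H}^n_{\mathbb{H}}$ ($n\geqslant 2$) and $\mathbb{H}^2_{\mathbb{O}}$, has property~(T); any uniform lattice inherits property~(T). Since $W$ is infinite, having both property~(T) and the Haagerup property is impossible. This rules out all higher-rank irreducible $X$ as well as $\mathbb{H}^n_{\mathbb{H}}$ and $\mathbb{H}^2_{\mathbb{O}}$.

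For regime (ii), suppose $X = X_1\times\dotsb\times X_k$ with $k\geqslant 2$ and each $X_i$ irreducible. If any $X_i$ has property~(T), the same argument rules this out for irreducible lattices, while a reducible lattice $\Gamma = \Gamma_1 \times \Gamma_2$ would force $W$ to split as a direct product of two infinite groups; but an irreducible large Coxeter group $W$ has trivial center and does not admit such a splitting (using Proposition \ref{prop:paris} to see that the centralizer of any infinite normal subgroup is small). If no factor has property~(T), all factors are locally $\mathrm{SO}(n,1)$ or $\mathrm{SU}(n,1)$, and one analyzes the irreducible case via Margulis' arithmeticity/normal-subgroup theorem for products of rank-one factors, reducing to regime~(iii).

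The hardest part, which is the main obstacle, is regime (iii): ruling out $X = \mathbb{H}^n_{\mathbb{C}}$ with $n\geqslant 2$, because $\mathrm{SU}(n,1)$ itself has the Haagerup property. My plan here is to pass to a torsion-free finite-index subgroup $\Gamma_0$ of $W \cong \Gamma$, obtained by Selberg's lemma. Then $\Gamma_0$ is simultaneously the fundamental group of a closed complex-hyperbolic manifold (hence a Kähler group, with even first Betti number, nonzero Kähler class in $H^2$, and nonzero top-degree class in $H^{2n}$ carrying the proportionality constraints of Hirzebruch) and a finite-index subgroup of a Coxeter group, whose rational cohomology is computable from the Davis complex $\Sigma_W$. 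Comparing these, one aims to obtain a contradiction: the cup-length and Hodge-theoretic constraints of a Kähler manifold (e.g.\ $c_1^n \neq 0$, or rigidity of harmonic maps via Siu--Sampson) are incompatible with the combinatorial cohomology of $\Sigma_W$. Equivalently, one can argue geometrically that the involutive elements of $\Gamma_0$ coming from Coxeter reflections would have to fix real codimension-one totally geodesic subspaces in $\mathbb{H}^n_{\mathbb{C}}$, but all nontrivial totally geodesic submanifolds of $\mathbb{H}^n_{\mathbb{C}}$ are of even real codimension, giving the desired contradiction.
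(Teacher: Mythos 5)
Your overall strategy is genuinely different from the paper's, and in its present form it has a real gap. The paper does not argue case by case: it first invokes Corollary 5.7 of Davis \cite{MR1600586}, which says that a symmetric space admitting a cocompact lattice isomorphic to a Coxeter group must be a product of \emph{real} hyperbolic spaces; this single input eliminates all complex, quaternionic and octonionic hyperbolic factors and all higher-rank irreducible factors at once. It then rules out more than one factor using strong indecomposability of irreducible large Coxeter groups (Paris, de Cornulier--de la Harpe, Qi) together with virtual indicability (Cooper--Long--Reid, Gonciulea) against the Margulis normal subgroup theorem. Your regime (i) (property (T) versus the Haagerup property via Niblo--Reeves) is a correct alternative for the higher-rank, quaternionic and octonionic cases, and is arguably more self-contained there.

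The gap is regime (iii), which you yourself identify as the main obstacle but do not close. The K\"ahler-group route is only a declaration of intent (``one aims to obtain a contradiction''), with no identified cohomological invariant of $\Sigma_W$ that actually conflicts with the Hodge-theoretic constraints; as written it proves nothing. The ``equivalent'' geometric route is self-defeating: you pass to a torsion-free finite-index subgroup $\Gamma_0$ by Selberg's lemma and then speak of ``the involutive elements of $\Gamma_0$ coming from Coxeter reflections'' --- but $\Gamma_0$ is torsion-free, so it contains no involutions, and the statement is vacuous. The correct way to run that argument is on $W$ itself: since $W$ would act faithfully, properly and cocompactly on the contractible manifold $\mathbb{H}^n_{\mathbb{C}}$, Theorem \ref{thm:must_be_reflection} (Davis, Proposition 10.9.7) forces $W$ to be generated by topological reflections, i.e.\ by involutions whose fixed sets separate; but the fixed set of an isometric involution of $\mathbb{H}^n_{\mathbb{C}}$ is totally geodesic and hence of even real codimension when $n\geqslant 2$, so it cannot separate. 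With that repair (and a similarly careful treatment of the reducible case, where ``strongly indecomposable'' is the statement you actually need rather than Proposition \ref{prop:paris} alone), your case analysis could be completed, but as submitted the complex hyperbolic case is not proved.
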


\begin{proof}
By Corollary 5.7 of Davis \cite{MR1600586}, the symmetric space $X$ is a product of real hyperbolic spaces. We aim to show that $X$ has only one factor. Let $G$ be the connected component of $\mathrm{Isom}(X)$ containing the identity, $\G_W$ the uniform lattice of $\mathrm{Isom}(X)$ isomorphic to $W$, and $\Gamma = \Gamma_W \cap G$. There exists a finite family $\{ G_i \}_{i \in I}$ of normal subgroups of $G$ such that (\emph{i}) $G = \prod_{i \in I} G_i$, (\emph{ii}) for each $i \in I$, $\G_i = G_i \cap \G$ is an irreducible lattice in $G_i$, and (\emph{iii}) $\prod_{i \in I} \G_i$ is a subgroup of finite index in $\Gamma$ (see e.g. Theorem 5.22 of Raghunathan \cite{MR0507234}). Since $W$ is irreducible and large, by Theorem 4.1 of Paris \cite{paris}, Proposition 8 of de Cornulier--de la Harpe \cite{cox_gp_indecomposable} or Theorem 1.2 of Qi \cite{MR2284573}, the group $W$ is \emph{strongly indecomposable}, i.e. for any finite-index subgroup $W'$ of $W$, there exists no non-trivial direct product decomposition of $W'$. Thus $\sharp I = 1$ and $\Gamma$ is an irreducible lattice in $G$. Finally, by Corollary 1.2 of Cooper--Long--Reid \cite{MR1626421} or Gonciulea \cite{MR1470091}, any finite-index subgroup of an infinite Coxeter group is not isomorphic to an irreducible lattice in a semi-simple Lie group of $\mathbb{R}$-rank $\geqslant 2$. Thus $X = \mathbb{H}^{d}$ for some $d \geqslant 2$ (see also the discussion after Corollary 1.3 of \cite{MR1626421}).
\end{proof}

\begin{remark}
In order to prove Corollary 1.2 of \cite{MR1626421}, the authors used the Margulis normal subgroup theorem \cite{margulis_normal_subgroup_thm} that any normal subgroup of an irreducible lattice in a semi-simple Lie group of $\mathbb{R}$-rank $\geqslant 2$ is either finite or finite index.
\end{remark}

\begin{remark}
Theorem 1.4 of Kleiner--Leeb \cite{MR1846203} shows that a finitely generated group quasi-isometric to $\mathbb{R}^n \times X$ with $X$ a symmetric space of noncompact type without Euclidean factor must have a finite-index subgroup containing $\Z^n$ inside its center. Any finite-index subgroup of an irreducible large Coxeter group has trivial center by Theorem 1.1 of Qi \cite{MR2284573}. Hence we may remove the hypothesis “without Euclidean factor” in Theorem \ref{thm_folklore} and change “semi-simple” to “reductive” in Theorems \ref{thm:tits_rep} and \ref{thm:tits_rep_H}. 
\end{remark}

\begin{proposition}\label{prop:not_lattice}
If $(W,S)$ is a Coxeter system with Coxeter diagram in Tables \ref{Hexamples_dim4}, \ref{examples_dim4}, \ref{examples_dim5}, \ref{examples_dim6}, \ref{examples_dim7}, \ref{examples_dim8}, then the group $W$ is not isomorphic to a uniform lattice of $\mathrm{Isom}(\mathbb{H}^e)$ for any integer $e \geqslant 1$.
\end{proposition}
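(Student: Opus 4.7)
The plan is to assume for contradiction that $W$ is isomorphic to a uniform lattice $\Gamma$ of $\mathrm{Isom}(\mathbb{H}^e)$ and then to extract a contradiction from the signature of the Cosine matrix $\mathsf{C}_W$. First, under this assumption $W$ is quasi-isometric to $\mathbb{H}^e$, so the Gromov boundary of $W$ is homeomorphic to $\mathbb{S}^{e-1}$; combined with Theorems \ref{thm:fusion} and \ref{thm:Hfusion}, this forces $e = d$ for Tables \ref{examples_dim4}, \ref{examples_dim5}, \ref{examples_dim6}, \ref{examples_dim7}, \ref{examples_dim8}, and $e = 4$ for Table \ref{Hexamples_dim4}.

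Next I would realise the action of $W$ on $\mathbb{H}^e$ projectively. Via the Klein model, $\mathbb{H}^e$ is a properly convex open subset of $\mathbb{S}(\mathbb{R}^{e+1})$ and $\mathrm{Isom}(\mathbb{H}^e) = \mathrm{PO}_{e,1}(\mathbb{R})$ lifts to a subgroup of $\mathrm{SL}^{\pm}(\mathbb{R}^{e+1})$. The isomorphism $W \cong \Gamma$ then provides a faithful representation $\tau : W \to \mathrm{SL}^{\pm}(\mathbb{R}^{e+1})$ whose image acts properly discontinuously and cocompactly on $\mathbb{H}^e$. Lemma \ref{lem:must_be_cartan1} now applies and tells us that $\tau(W)$ is a projective Coxeter group generated by the reflections $\tau(s)$, together with a fundamental Coxeter polytope $P$. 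Because $\tau(W) \subset \mathrm{PO}_{e,1}(\mathbb{R})$, each $\tau(s)$ is a hyperbolic reflection, and $P$ is a compact hyperbolic Coxeter polytope in $\mathbb{H}^e$ with facets indexed by $S$.

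I would then compute the Cartan matrix $A$ of $P$. Taking $b_s$ to be the outer unit spacelike normal to the $s$-facet of $P$ in $\mathbb{R}^{e,1}$ and setting $\alpha_s = 2\langle b_s, \cdot\rangle_{e,1}$, one obtains $\alpha_s(b_s) = 2$ and $A_{st} = 2\langle b_s, b_t\rangle_{e,1}$, a symmetric matrix. Since the Coxeter diagrams in all the relevant tables contain no edge of label $\infty$, every pair of facets of $P$ meets in $\mathbb{H}^e$ at a dihedral angle $\pi/M_{st}$, so $A_{st} = -2\cos(\pi/M_{st}) = (\mathsf{C}_W)_{st}$; hence $A = \mathsf{C}_W$. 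Because $P$ has non-empty interior in $\mathbb{H}^e$, the vectors $(b_s)_{s \in S}$ span $\mathbb{R}^{e+1}$, so $\mathsf{C}_W$ is (up to a factor $2$) the Gram matrix of spanning vectors in $\mathbb{R}^{e,1}$ and must therefore have signature $(e, 1, \sharp S - e - 1)$.

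Finally I would derive the contradiction by comparing signatures. For Tables \ref{examples_dim4}, \ref{examples_dim5}, \ref{examples_dim6}, \ref{examples_dim7}, \ref{examples_dim8}, Proposition \ref{prop:signature} gives $\mathsf{C}_W$ of signature $(d, 2, 0)$, whose negative inertia index is $2$, directly contradicting the required index $1$. For Table \ref{Hexamples_dim4}, Proposition \ref{prop:signatureH} gives $\mathsf{C}_W$ of signature $(5, 1, 0)$ and rank $6$; the Gram-matrix constraint then forces $e + 1 = 6$, i.e.\ $e = 5$, whereas the first paragraph's quasi-isometry argument forced $e = 4$, again a contradiction. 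The main delicate point is the third paragraph: one has to confirm that Lemma \ref{lem:must_be_cartan1} applies cleanly to the Klein model and that the hyperbolicity of the reflections, together with the absence of $\infty$-edges in the diagrams, is exactly what pins down $A = \mathsf{C}_W$; everything else is bookkeeping with signatures.
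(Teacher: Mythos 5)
Your proposal is correct and follows essentially the same route as the paper's proof: pin down $e=d$ via the Gromov boundary, invoke Lemma \ref{lem:must_be_cartan1} to realize $\tau(W)$ as a projective Coxeter group whose symmetric Cartan matrix must equal $\mathsf{C}_W$ (no $\infty$-labels), and contradict Propositions \ref{prop:signature} and \ref{prop:signatureH}. The paper phrases the final step slightly more economically — a Gram matrix of $d+2$ vectors in $\mathbb{R}^{d,1}$ is singular, whereas $\det(\mathsf{C}_W)\neq 0$ — which subsumes your signature comparison and avoids having to argue that the normals span $\mathbb{R}^{e+1}$.
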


\begin{proof}
Let $d$ be the rank of $W$ minus 2. Suppose by contradiction that there is a faithful representation $\tau:W \to \mathrm{Isom}(\mathbb{H}^e)$ whose image is a uniform lattice. By Corollary \ref{cor:davis_details}, we have $\partial W \simeq \S^{d-1}$. Fix $x_0 \in \mathbb{H}^e$. Since the orbit map $\varphi : W \to \mathbb{H}^e$ defined by $\g \mapsto \tau(\g) \cdot x_0$ is a quasi-isometry, it extends to a homeomorphism $\partial \varphi : \partial W \rightarrow \partial \mathbb{H}^e \simeq \S^{e-1}$, and so $d=e$. 

\medskip

Since $\tau(W)$ is a uniform lattice of $\mathrm{Isom}(\mathbb{H}^d)$, by Lemma \ref{lem:must_be_cartan1}, the group $\tau(W)$ is a projective Coxeter group generated by reflections $(\tau(s) = \mathrm{Id} - Q(v_s, \cdot) \otimes v_s )_{s\in S}$ for some $d+2$ vectors $v_s$ of $\R^{d,1}$ with $Q(v_s,v_s) = 2$, where $Q$ is the quadratic form on $\R^{d,1}$ defining $ \mathrm{PO}_{d,1}(\R) \simeq \mathrm{Isom}(\mathbb{H}^d)$. Therefore, the determinant of the symmetric Cartan matrix $\mathsf{C}_W = (Q(v_s,v_t))_{s,t \in S}$ must be zero. However, it is impossible since we have by Propositions \ref{prop:signature} and \ref{prop:signatureH} that the determinant of the Cosine matrix $\mathsf{C}_W$ of $W$ is non-zero.
\end{proof}

\begin{remark}
Proposition \ref{prop:not_lattice} (hence Lemma \ref{lem:must_be_cartan1}) is highly inspired by Proposition 3.1 of Benoist \cite{benoist_qi}, Example 12.6.8 of \cite{davis_book} and Examples 2.3.1 and 2.3.2 of Januszkiewicz--\'{S}wi{\k{a}}tkowski \cite{janu}.
\end{remark}

\section{An alternative proof of Moussong's criterion}\label{section:Moussong}

In the paper \cite{DGK}, the authors give an alternative proof of Moussong’s hyperbolicity criterion \cite{moussong} (see Theorem \ref{thm:moussong}) in the case of right-angled Coxeter groups. In this section, we prove the criterion for arbitrary Coxeter groups built on \cite{DGK}.

\medskip

Let $W_{S,M}$ be a Coxeter group. For any non-negative real number $\lambda$, the \emph{$\lambda$-Cosine matrix} $\mathsf{C}_W^\lambda$ of $W$ is a symmetric matrix whose entries are:
$$
(\mathsf{C}_W^\lambda)_{st} = -2\cos \bigg( \frac{\pi}{M_{st}}\bigg) \quad \textrm{if } M_{st} < \infty \quad \textrm{and}  \quad (\mathsf{C}_W^\lambda)_{st} = -2 (1+\lambda) \quad \textrm{otherwise}
$$
In the case $\lambda =0$, $\mathsf{C}_W^\lambda = \mathsf{C}_W$. When $\lambda > 0$, then by Proposition \ref{prop:usefull}, we have:
\begin{itemize}
\item the matrix $\mathsf{C}_W^\lambda$ satisfies $(\mathsf{H}_0)$ if and only if $S$ does not contain any affine subset of rank at least $3$;

\item  Assume that $\mathsf{C}_W^\lambda$ satisfies $(\mathsf{H}_0)$. Then $\mathsf{C}_W^\lambda$ satisfies $(\mathsf{H}_-)$ if and only if $S$ does not contain two orthogonal non-spherical subsets.
\end{itemize}

\begin{proof}[Proof of Moussong's criterion]
If $W$ is Gromov-hyperbolic, then it does not contain any subgroup isomorphic to $\mathbb{Z}^2$, and so it satisfies Moussong’s criterion. In other words, $S$ does not contain any affine subset of rank $\geqslant 3$ nor two orthogonal non-spherical subsets. Conversely, suppose that $W$ satisfies Moussong's criterion. We may assume without loss of generality that $W$ is infinite and irreducible. Then for any positive number $\lambda$, the $\lambda$-Cosine matrix $\mathsf{C}_W^\lambda$ of $W$ is an irreducible symmetric Cartan matrix of signature $(p,q,r)$ for some $p,q \geqslant 1$ satisfying the conditions $(\mathsf{H}_0)$ and $(\mathsf{H}_-)$. By Theorem \ref{thm:dgk}, the subgroup $\sigma_{\mathsf{C}_W^\lambda}(W)$ of $\mathrm{O}(B_{\mathsf{C}_W^\lambda})$ is $\mathbb{H}^{p,q-1}$-convex cocompact, hence $W$ is Gromov-hyperbolic by Theorem 1.7 of \cite{DGK}.
\end{proof}

\section{Proof of Theorems \ref{thm:disconnected} and \ref{thm:disconnected_H} }\label{section:disconnected}

In order to prove Theorems \ref{thm:disconnected} and \ref{thm:disconnected_H}, it is enough to exhibit two isolated points in the character variety of interest. To do this, we will consider a Coxeter diagram on $d+3$ nodes with one infinite edge label, such that the finite labels, when set to particular values, give rise to a uniform lattice $W_0$ of $\mathrm{PO}_{d,1}(\mathbb{R})$. In particular, the corresponding symmetric Cartan matrix $A_0$ has a two-dimensional kernel. We perturb the finite labels of $W_0$ to get a new Coxeter group $W$ so that the Gromov boundary of $W$ stays a $(d-1)$-sphere. Generically a symmetric Cartan matrix $A$ compatible with $W$ becomes invertible, but two choices for the entry of $A$ at the infinite label yield the result that $A$ has one-dimensional kernel, \ie an action on $\mathbb{H}^{d+1}$ or $\mathbb{H}^{d,1}$, depending on signature. These actions are rigid since there is only one infinite label to play with.

\begin{proof}[Proof of Theorems \ref{thm:disconnected} and \ref{thm:disconnected_H}]

Each item of Tables \ref{table:barbot2_dim4} or \ref{table:barbot2_dim6} corresponds to a one-parameter family $(W_p)_p$ or a two-parameter family $(W_{p,q})_{p,q}$ of Coxeter groups. We often denote $W_p$ or $W_{p,q}$ simply by $W$ or $W_S$. 

\medskip

\begin{table}[ht!]
\begin{tabular}{cc cc ccc}
\begin{tikzpicture}[thick,scale=0.7, every node/.style={transform shape}]
\node[draw,circle] (1) at (36-18:1.0514) {};
\node[draw,circle] (2) at (108-18:1.0514){};
\node[draw,circle] (3) at (180-18:1.0514){};
\node[draw,circle,fill=\Dshaded] (4) at (252-18:1.0514){};
\node[draw,circle,fill=\Lshaded] (5) at (324-18:1.0514){};
\draw (1)--(2)--(3)--(4)--(5)--(1);
\node[draw,circle,right=1cm of 1] (6) {};
\node[draw,circle,left=1cm of 3] (7) {};
\draw (1)--(6) node[above,midway] {$q \geqslant 7$};
\draw (3)--(7) node[above,midway] {$p \geqslant 7$};
\draw (4)--(5) node[below,midway] {$\infty$};
\draw (0.2,-2) node[]{$(p_0,q_0) := (10,10)$} ;
\end{tikzpicture}
&&
\begin{tikzpicture}[thick,scale=0.7, every node/.style={transform shape}]
\node[draw,circle] (1) at (36-18:1.0514) {};
\node[draw,circle] (2) at (108-18:1.0514){};
\node[draw,circle] (3) at (180-18:1.0514){};
\node[draw,circle,fill=\Dshaded] (4) at (252-18:1.0514){};
\node[draw,circle,fill=\Lshaded] (5) at (324-18:1.0514){};
\draw (1)--(2)--(3)--(4)--(5)--(1);
\node[draw,circle,right=1cm of 1] (6) {};
\node[draw,circle] (7) at ($(0,0)$) {};
\draw (1)--(6) node[above,midway] {$p \geqslant 7$};
\draw (4)--(5) node[below,midway] {$\infty$};
\draw (4)--(3);
\draw (5)--(1);
\draw (7)--(2);
\draw (7)--(3) node[midway] {$5$};
\draw (7)--(4) ;
\draw (0,-2) node[]{$p_0 := 10$} ;
\end{tikzpicture}
&&
\begin{tikzpicture}[thick,scale=0.7, every node/.style={transform shape}]
\node[draw,circle,fill=\Dshaded] (1) at (0,0) {};
\node[draw,circle] (2) at (0,2) {};
\node[draw,circle] (4) at (2,2) {};
\node[draw,circle] (3) at (1,2) {};
\node[draw,circle] (5) at (2,1) {};
\node[draw,circle,fill=\Lshaded] (6) at (2,0) {};
\draw (1)--(2)--(3)--(4)--(5)--(6)--(1);
\node[draw,circle] (7) at (3,1) {};
\draw (5)--(7) node[above,midway] {$q \geqslant 7$};
\draw (2)--(3) node[above,midway] {$p \geqslant 7$};
\draw (4)--(3) node[above,midway] {$4$};
\draw (1)--(2) node[left,midway] {$4$};
\draw (1)--(6) node[below,midway] {$\infty$};
\draw (1.3,-1) node[]{$(p_0,q_0) := (8,8)$} ;

\end{tikzpicture}
\\
\begin{tikzpicture}[thick,scale=0.7, every node/.style={transform shape}]
\node[draw,circle] (1) at (36-18:1.0514) {};
\node[draw,circle] (2) at (108-18:1.0514){};
\node[draw,circle] (3) at (180-18:1.0514){};
\node[draw,circle,fill=\Dshaded] (4) at (252-18:1.0514){};
\node[draw,circle,fill=\Lshaded] (5) at (324-18:1.0514){};
\draw (1)--(2)--(3)--(4)--(5)--(1);
\node[draw,circle,right=1cm of 1] (6) {};
\node[draw,circle,left=1cm of 3] (7) {};
\draw (1)--(6) node[above,midway] {$q \geqslant 7$};
\draw (3)--(7) node[above,midway] {$p \geqslant 7$};
\draw (4)--(5) node[below,midway] {$\infty$};
\draw (4)--(3) node[left,midway] {$4$};
\draw (5)--(1) node[right,midway] {$4$};
\draw (0.1,-2) node[]{$(p_0,q_0) := (8,8)$} ;
\end{tikzpicture}
&&
\begin{tikzpicture}[thick,scale=0.7, every node/.style={transform shape}]
\node[draw,circle] (1) at (36-18:1.0514) {};
\node[draw,circle] (2) at (108-18:1.0514){};
\node[draw,circle] (3) at (180-18:1.0514){};
\node[draw,circle,fill=\Dshaded] (4) at (252-18:1.0514){};
\node[draw,circle,fill=\Lshaded] (5) at (324-18:1.0514){};
\draw (1)--(2)--(3)--(4)--(5)--(1);
\node[draw,circle,right=1cm of 1] (6) {};
\node[draw,circle] (7) at ($(0,0)$) {};
\draw (1)--(6) node[above,midway] {$p \geqslant 7$};
\draw (4)--(5) node[below,midway] {$\infty$};
\draw (4)--(3) node[left,midway] {$4$};
\draw (5)--(1) node[right,midway] {$4$};
\draw (7)--(2);
\draw (7)--(3) node[midway] {$4$};
\draw (7)--(4) node[right,midway] {$4$};
\draw (0,-2) node[]{$ p_0 := 8$} ;
\end{tikzpicture}
&&
\begin{tikzpicture}[thick,scale=0.7, every node/.style={transform shape}]
\node[draw,circle,fill=\Dshaded] (1) at (0,0) {};
\node[draw,circle] (2) at (0,2) {};
\node[draw,circle] (3) at (1,2) {};
\node[draw,circle] (4) at (2,2) {};
\node[draw,circle] (5) at (2,1) {};
\node[draw,circle,fill=\Lshaded] (6) at (2,0) {};
\draw (1)--(2)--(3)--(4)--(5)--(6)--(1);
\node[draw,circle] (7) at (3,1) {};
\draw (5)--(7) node[midway] {$4$};
\draw (2)--(3) node[above,midway] {$p \geqslant 7$};
\draw (4)--(3) node[above,midway] {$4$};
\draw (1)--(2) node[left,midway] {$4$};
\draw (1)--(6) node[below,midway] {$\infty$};
\draw (4)--(7);
\draw (6)--(7);
\draw (1,-1) node[]{$p_0 := 8$} ;
\end{tikzpicture}
\end{tabular}
\caption{Abstract geometric reflection groups of dimension $d=4$}
\label{table:barbot2_dim4}
\begin{tikzpicture}[thick,scale=0.7, every node/.style={transform shape}]
\node[draw,circle] (1) at (36-18:1.0514) {};
\node[draw,circle] (2) at (108-18:1.0514){};
\node[draw,circle] (3) at (180-18:1.0514){};
\node[draw,circle,fill=\Dshaded] (4) at (252-18:1.0514){};
\node[draw,circle,fill=\Lshaded] (5) at (324-18:1.0514){};
\draw (1)--(2)--(3)--(4)--(5)--(1);
\node[draw,circle,right=1cm of 1] (6) {};
\node[draw,circle,right=1cm of 6] (7) {};
\node[draw,circle,right=1cm of 7] (8) {};
\draw (1)--(6)--(7)--(8);
\draw (7)--(8) node[above,midway] {$5$};
\node[draw,circle,left=1cm of 3] (9) {};
\draw (3)--(9) node[below,midway] {$p \geqslant 7$};
\draw (4)--(5) node[below,midway] {$\infty$};
\draw (0,-2) node[]{$p_0 := 10$} ;
\end{tikzpicture}
\caption{Abstract geometric reflection groups of dimension $d=6$}
\label{table:barbot2_dim6}
\end{table}

Let $d$ be the rank of $W$ minus $3$. It is easy to verify that the Coxeter group $W$ is Gromov-hyperbolic using Moussong’s hyperbolicity criterion (see Theorem \ref{thm:moussong}). In \cite{tumarkin_n+3}, Tumarkin showed that if $(p,q)=(p_0,q_0)$ (resp. $p=p_0$), then $W_{p,q}$ (resp. $W_{p}$) is the geometric reflection group of a compact hyperbolic $d$-polytope. Note that if $W_0$ and $W_1$ are two Coxeter groups such that ($i$) the underlying graphs of $\mathsf{G}_{W_0}$ and $\mathsf{G}_{W_1}$ are equal; ($ii$) the Coxeter diagram $\mathsf{G}_{W_1}$ is obtained from $\mathsf{G}_{W_0}$ by changing from a label $m_0 \geqslant 7$ on a fixed edge to another label $m_1 \geqslant 7$, then the nerves of $W_0$ and $W_1$ are isomorphic. Thus, the nerve of any Coxeter group $W_{p,q}$ (resp. $W_p$) is isomorphic to the nerve of $W_{p_0,q_0}$ (resp. $W_{p_0}$), and so the Coxeter group $W$ is an abstract geometric reflection group of dimension $d$. By Theorem \ref{thm:merci_davis}, the Davis complex of $W$ and the Gromov boundary of $W$ are homeomorphic to $\mathbb{R}^d$ and $\mathbb{S}^{d-1}$, respectively.

\medskip

We denote by $S_1$ (resp. $S_2$) the complement of the light-shaded (resp. dark-shaded) node in $S$. For example, if $W_S$ corresponds to the top left diagram in Table \ref{table:barbot2_dim4}, then $W_{S_1}$ (resp. $W_{S_2}$) corresponds to the diagram (A) (resp. (B)) in Figure \ref{fig:discon_specific}. 

\medskip

\begin{figure}[ht!]
\subfloat[]{
\begin{tikzpicture}[thick,scale=0.7, every node/.style={transform shape}]
\node[draw,circle] (1) at (36-18:1.0514) {};
\node[draw,circle] (2) at (108-18:1.0514){};
\node[draw,circle] (3) at (180-18:1.0514){};
\node[draw,circle,fill=\Dshaded] (4) at (252-18:1.0514){};
\draw (1)--(2)--(3)--(4);
\node[draw,circle,right=1cm of 1] (6) {};
\node[draw,circle,left=1cm of 3] (7) {};
\draw (1)--(6) node[above,midway] {$q \geqslant 7$};
\draw (3)--(7) node[above,midway] {$p \geqslant 7$};
\end{tikzpicture}
}
\quad\quad\quad
\subfloat[]{
\begin{tikzpicture}[thick,scale=0.7, every node/.style={transform shape}]
\node[draw,circle] (1) at (36-18:1.0514) {};
\node[draw,circle] (2) at (108-18:1.0514){};
\node[draw,circle] (3) at (180-18:1.0514){};
\node[draw,circle,fill=\Lshaded] (5) at (324-18:1.0514){};
\draw (1)--(2)--(3);
\draw (5)--(1);
\node[draw,circle,right=1cm of 1] (6) {};
\node[draw,circle,left=1cm of 3] (7) {};
\draw (1)--(6) node[above,midway] {$q \geqslant 7$};
\draw (3)--(7) node[above,midway] {$p \geqslant 7$};
\end{tikzpicture}
}
\caption{Two special subgroups of the Coxeter group $W_S$}\label{fig:discon_specific}
\end{figure}
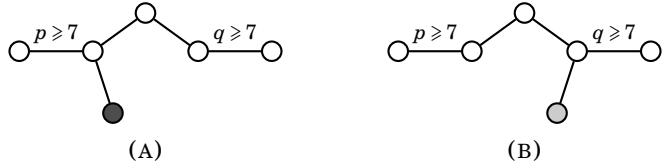

By the same reasoning as in the proof of Proposition \ref{prop:signature}, the signature $s_{W_{S_i}}$ of $W_{S_i}$, $i=1,2$, is determined by the sign of the determinant of $\mathsf{C}_{W_{S_i}}$. More precisely, the signature $s_{W_{S_i}}$ is $(d,2,0)$, $(d,1,1)$ and $(d+1,1,0)$ when $\det(\mathsf{C}_{W_{S_i}}) > 0$, $ = 0$ and $< 0$, respectively.

\medskip

We set $f(\lambda) = \det(\mathsf{C}_W^\lambda)$, which is a quadratic polynomial of $\lambda$:
$$ f(\lambda) = a_2 \lambda^2 + a_1 \lambda + a_0$$ 
It is not difficult (but important) to see that the discriminant $\delta$ of $f(\lambda)$ is:
$$  \delta = 16 \, \det(\mathsf{C}_{W_{S_1}}) \det(\mathsf{C}_{W_{S_2}}) $$
Thus, the polynomial $f(\lambda)$ has two distinct positive real roots if and only if
\begin{equation}\label{inequ:disconnected}
 \det(\mathsf{C}_{W_{S_1}}) \det(\mathsf{C}_{W_{S_2}})  > 0, \quad a_1 a_2 < 0 \quad \textrm{and} \quad a_0 a_2 >0.
\end{equation}

\begin{lemma}
Let $W$ be a Coxeter group with Coxeter diagram in Tables \ref{table:barbot2_dim4} or \ref{table:barbot2_dim6}. Then $W$ satisfies the condition (\ref{inequ:disconnected}) if and only if the Coxeter diagram of $W$ lies in Tables \ref{table:barbot2_not-poincare_hyp_dim4}, \ref{table:barbot2_not-poincare_hyp_dim6}, \ref{table:barbot2_Quasi_Fuchsian_dim4}, \ref{table:barbot2_Quasi_Fuchsian_dim6}.
\end{lemma}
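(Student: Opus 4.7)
The plan is to verify the lemma by a direct case analysis over the finitely many parameterized families displayed in Tables \ref{table:barbot2_dim4} and \ref{table:barbot2_dim6}. For each family $W = W_p$ or $W_{p,q}$, I would first identify the unique infinite edge, which joins the two shaded nodes $s_\star \in S_2$ and $t_\star \in S_1$. Since $\mathsf{C}_W^\lambda$ agrees with $\mathsf{C}_W$ outside the two symmetric entries at positions $(s_\star,t_\star)$ and $(t_\star,s_\star)$, multilinearity of the determinant in rows $s_\star$ and $t_\star$ immediately yields $f(\lambda) = a_0 + a_1 \lambda + a_2 \lambda^2$ with
\[
a_0 \;=\; \det(\mathsf{C}_W), \qquad a_2 \;=\; \pm 4 \det\bigl(\mathsf{C}_{W_{S \setminus \{s_\star, t_\star\}}}\bigr),
\]
and $a_1$ recoverable either from Vinberg's two-edge identity (Propositions 12 and 13 of \cite{MR774946}, used already in the proof of Lemma \ref{lem:computation}) or by evaluation at any auxiliary value of $\lambda$. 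In each family the special subgroup $W_{S \setminus \{s_\star, t_\star\}}$ has no infinite edge, so $a_2$ is an explicit trigonometric expression in the parameters.

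Second, I would write the two determinants $\det(\mathsf{C}_{W_{S_1}})$ and $\det(\mathsf{C}_{W_{S_2}})$ as explicit functions of $p$ (resp.\ $p,q$). The subgroups $W_{S_1}$ and $W_{S_2}$ obtained by deleting the light- or dark-shaded node have no infinite edge and fit exactly the hypothesis of Lemma \ref{lem:computation}; the same monotonicity argument shows that each of these determinants is a monotone function of its parameter(s) and changes sign exactly once, at the Tumarkin value $p_0$ or $(p_0,q_0)$, where the sign change records the existence of the compact hyperbolic polytope of \cite{tumarkin_n+3}. This cuts the parameter space into sub-ranges on which the signature of $W_{S_i}$ is constantly either $(d{+}1,1,0)$ or $(d,2,0)$, corresponding respectively to the quasi-Fuchsian and strictly GHC-regular sides of the lemma.

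Third, once all coefficients are in hand as explicit trigonometric expressions, I would check for which pairs $(p,q)$ (or which $p$) the three sign conditions in $(\ref{inequ:disconnected})$ are simultaneously satisfied. The condition $\det(\mathsf{C}_{W_{S_1}})\det(\mathsf{C}_{W_{S_2}})>0$ singles out either the ``both signatures $(d,2,0)$'' region or the ``both signatures $(d{+}1,1,0)$'' region, and the remaining two conditions $a_1 a_2 < 0$ and $a_0 a_2 > 0$ then refine each of these to an explicit finite or cofinite set of integer parameter values $\geqslant 7$. A direct comparison of the surviving parameter ranges with the diagrams drawn in Tables \ref{table:barbot2_not-poincare_hyp_dim4}, \ref{table:barbot2_not-poincare_hyp_dim6}, \ref{table:barbot2_Quasi_Fuchsian_dim4}, \ref{table:barbot2_Quasi_Fuchsian_dim6} finishes the equivalence.

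The main obstacle is not conceptual but purely one of bookkeeping: nine or so families, each with one or two parameters $\geqslant 7$, each requiring separate manipulation of several trigonometric determinants and a careful monotonicity-based splitting of the parameter range. No tool beyond Vinberg's determinantal identities and the monotonicity analysis of Lemma \ref{lem:computation} is needed, but the case split must be carried out cleanly so that the four new tables emerge precisely from the three sign conditions.
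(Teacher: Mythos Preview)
Your plan is essentially the paper's own approach: explicit family-by-family computation of $a_0,a_1,a_2$ and of $\det(\mathsf{C}_{W_{S_1}}),\det(\mathsf{C}_{W_{S_2}})$, followed by a sign analysis of the three inequalities in~(\ref{inequ:disconnected}); the paper carries this out in full only for the top-left diagram of Table~\ref{table:barbot2_dim4} (with a picture of the zero-loci in the $(c_p,c_q)$-plane) and declares the remaining families analogous. Two small caveats: the subgroups $W_{S_1},W_{S_2}$ here are not literally among the families covered by Lemma~\ref{lem:computation}, so you should say ``by the same argument as'' rather than ``by''; and for the two-parameter families each $\det(\mathsf{C}_{W_{S_i}})$ vanishes along a curve through $(p_0,q_0)$, not just at that point, so ``changes sign exactly once at the Tumarkin value'' should be rephrased accordingly.
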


\begin{proof}
We only prove it for the Coxeter group $W_{p,q}$ in the top left item of Table \ref{table:barbot2_dim4}; the argument is similar for other Coxeter groups. 

\medskip

We set $c_m := \cos(\frac{2\pi}{m})$, and let $x = c_p$ and $y = c_q$. Note that $x, y > \frac{1}{2}$ because $p, q \geqslant 7$. By a simple computation, we have that the polynomial $f(\lambda)$ is: 
\begin{center}
\bgroup
\def\arraystretch{1.3}
\begin{tabular}{c@{} c@{} c@{} c@{} c}
\multicolumn{5}{c}{$ f(\lambda)=a_2(x,y) \lambda^2 +  a_1(x,y) \lambda + a_0(x,y)$} \\
 $a_0 (x,y) $ & \, $=$ & \, $8(2 x + 2 y - 3 )$ &  & \\
 $a_1 (x,y) $ & \, $=$ & \, $-16 ( 2 x - 1 ) ( 2 y - 1 ) $ & \, $<$ & 0\\
 $a_2 (x,y) $ & \, $=$ & \, $8 \left(1 - (2 x - 1 )(2 y - 1 ) \right)$ & \, $>$ & 0\\
\end{tabular}
\egroup
\end{center}
and the discriminant $\delta(x,y)$ of $f(\lambda)$ is:
\begin{center}
\bgroup
\def\arraystretch{1.3}
\begin{tabular}{c}
$\delta(x,y) = 16 \det(\mathsf{C}_{W_{S_1}}) \det(\mathsf{C}_{W_{S_2}})$ \\
$\det(\mathsf{C}_{W_{S_1}}) = 4(4 x y - 2 x - 1) \quad \textrm{and} \quad \det(\mathsf{C}_{W_{S_2}}) = 4(4 x y - 2 y - 1)$
\end{tabular}
\egroup
\end{center}
In Figure \ref{figure:region}, the curves $\det(\mathsf{C}_{W_{S_1}}) = 0$, $\det(\mathsf{C}_{W_{S_2}}) = 0$ and $a_0(x,y) = 0$ are drawn in red, blue and green, respectively. 

\begin{figure}[ht!]
\centering
\labellist
\footnotesize \hair 2pt
\pinlabel $c_7$ at 93 0
\pinlabel $c_8$ at 151 0
\pinlabel $c_9$ at 192 0
\pinlabel $c_{10}$ at 223 0
\pinlabel $c_{11}$ at 246 0
\pinlabel $c_{13}$ at 275 0
\pinlabel $c_{18}$ at 313 0
\pinlabel $x$ at 357 0
\pinlabel $c_7$ at -5 93
\pinlabel $c_8$ at -5 151
\pinlabel $c_9$ at -5 192
\pinlabel $c_{10}$ at -5 223
\pinlabel $c_{11}$ at -5 246 
\pinlabel $c_{13}$ at -5 275 
\pinlabel $c_{18}$ at -5 313 
\pinlabel $y$ at -2 353
\pinlabel $\mathcal{R}_D$ at 293 293
\pinlabel $\mathcal{R}_L$ at 205 205
\pinlabel $/$ at 72 308
\pinlabel \framebox[1.1\width]{$\det(\mathsf{C}_{W_{S_1}})=0$} at 120 331
\pinlabel $/$ at 269 132
\pinlabel \framebox[1.1\width]{$\det(\mathsf{C}_{W_{S_2}})=0$} at 317 155
\pinlabel $/$ at 171 182
\pinlabel \framebox[1.1\width]{$a_0(x,y)=0$} at 131 161
\endlabellist
\includegraphics[height=8cm]{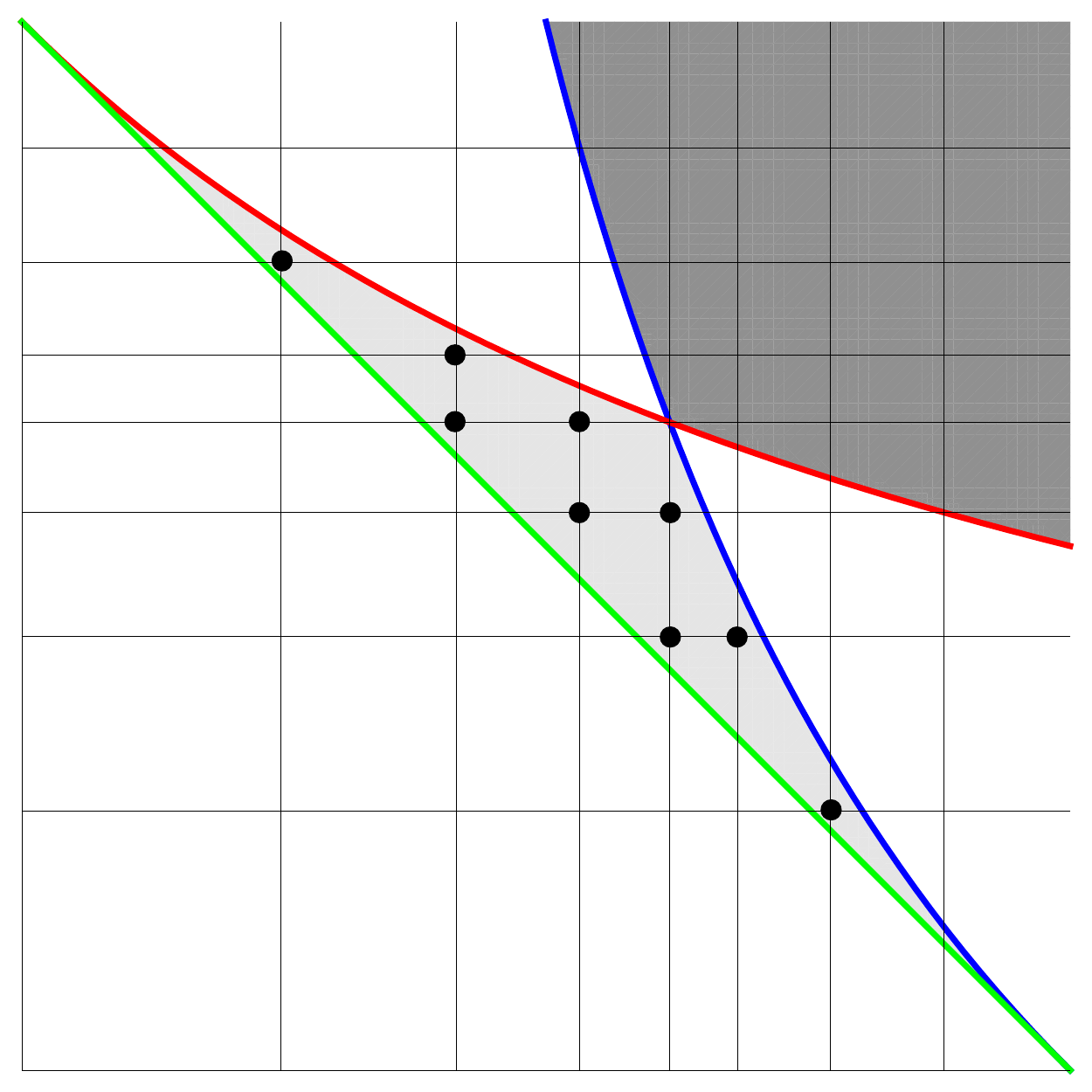}
\caption{Two open regions $\mathcal{R}_D$ and $\mathcal{R}_L$}\label{figure:region}
 \end{figure}

\medskip

Observe that the Coxeter group $W_{p,q}$ satisfies the condition (\ref{inequ:disconnected}) if and only if $(x,y) = (c_p,c_q)$ lies in the dark-shaded region $\mathcal{R}_D$ or the light-shaded region $\mathcal{R}_L$. Finally, it follows from Figure \ref{figure:region} that:
\begin{center}
\bgroup
\def\arraystretch{1.3}
\begin{tabular}{ccc}
$ (c_p,c_q) \in \mathcal{R}_D$ & $ \Leftrightarrow  $ & $p, q \geqslant 9 \textrm{ and }  \{p,q\} \neq \{ 9, 9 \}, \{ 9, 10 \}, \dotsc, \{ 9, 18 \}, \{10,10\}; $ \\
$(c_p,c_q) \in \mathcal{R}_L$ & $\Leftrightarrow $  & $  \{p,q\} = \{ 7, 13 \}, \{ 8, 10 \}, \{ 8, 11 \}, \{9,9\}, \{9,10\} $ 
\end{tabular}
\egroup
\end{center}
For example, the point $(c_p,c_q)$ lies in $\mathcal{R}_L$ if and only if it corresponds to one of the black dots in Figure \ref{figure:region}.
\end{proof}

\emph{Assume now that the Coxeter diagram of $W_S$ lies in Tables \ref{table:barbot2_not-poincare_hyp_dim4}, \ref{table:barbot2_not-poincare_hyp_dim6}, \ref{table:barbot2_Quasi_Fuchsian_dim4}, \ref{table:barbot2_Quasi_Fuchsian_dim6}.} 
Then the polynomial $f(\lambda) = \det(\mathsf{C}_W^\lambda)$ has two distinct positive roots, namely $\lambda_1$ and $\lambda_2$. Note that the matrices $\mathsf{C}_{W_{S_1}}$ and $\mathsf{C}_{W_{S_2}}$ have the same signature because their determinants have the same sign. If the Cosine matrix $\mathsf{C}_{W_{S_1}}$ (hence $\mathsf{C}_{W_{S_2}}$) has the signature $(m,n,0)$ with $(m,n)=(d,2)$ or $(d+1,1)$, then the signature of $\mathsf{C}_i := \mathsf{C}^{\lambda_i}_{W}$, $i=1,2$, is $(m,n,1)$. Thus, Theorem \ref{thm:dgk} provides $\mathbb{H}^{m,n-1}$-convex cocompact representations $\sigma_{\mathsf{C}_i} : W \to \mathrm{PO}(B_{\mathsf{C}_i}) \simeq \mathrm{PO}_{m,n}(\mathbb{R})$ for $i = 1, 2$. For example, in the case where $W = W_{p,q}$ in the top left item of Table \ref{table:barbot2_dim4} and the point $(c_p,c_q)$ lies in $\mathcal{R}_D$ (resp. $\mathcal{R}_L$), the Coxeter group $W$ admits two convex cocompact actions on $\mathbb{H}^{4,1}$ (resp.  $\mathbb{H}^{5}$).

\medskip

We let $[\rho]$ denote the equivalence class of the representation $\rho$, i.e. an element of the $\mathrm{PO}_{m,n}(\mathbb{R})$-character variety of $W$. Since $\mathsf{C}_1 \neq \mathsf{C}_2$, the characters $[\sigma_{\mathsf{C}_1}]$ and $[\sigma_{\mathsf{C}_2}]$ are different. Finally, we claim that for each $i=1,2$, the connected component $\chi_i$ of the $\mathrm{PO}_{m,n}(\mathbb{R})$-character variety of $W$ containing $[\sigma_{\mathsf{C}_i}]$ is a singleton. Indeed, suppose that $\tau_1 \in \chi_i$. Then there exists a continuous path $\tau_t : [0,1] \to \chi_i$ from $\tau_0 = \sigma_{\mathsf{C}_i}$ to $\tau_1$. This implies that for each torsion element $\gamma \in W$, the eigenvalues of $\tau_t(\gamma)$ do not change, i.e. the image $\tau_t(\gamma)$ is conjugate to $\tau_0(\gamma) = \sigma_{\mathsf{C}_i}(\gamma)$. In particular, for every $s \in S$, $\tau_t(s)$ is a projective reflection in $\mathrm{PO}_{m,n}(\mathbb{R})$, and there exists a continuous family of symmetric Cartan matrices $(A_t)_{t \in [0,1]}$ with $\sigma_{A_t} = \tau_t$. Hence we conclude that there is a continuous function $\mu_t : [0,1] \to \mathbb{R}$ with $\mathsf{C}^{\mu_t}_W = A_t$ because the diagram $\mathsf{G}_W$ has only one edge of label $\infty$. The matrix $A_t$ must be of rank $\leqslant m + n = d+2$. In other words, $\det( \mathsf{C}^{\mu_t}_W) = 0$, and so $\mu_t = \lambda_i$, as claimed. 

\medskip

This completes the proof of Theorems \ref{thm:disconnected} and \ref{thm:disconnected_H}.
\end{proof}

\color{black}

\begin{remark}
As mentioned above, the Coxeter group $W$ with $(p,q)=(p_0,q_0)$ or $p=p_0$ is the geometric reflection group of a compact hyperbolic $d$-polytope. Thus, by the same reasoning as in Example 2.3.1 of Januszkiewicz--\'{S}wi{\k{a}}tkowski \cite{janu} (cf. Proposition \ref{prop:not_lattice}), if $(p,q)\neq (p_0,q_0)$ (resp. $p \neq p_0$), then $W_{p,q}$ (resp. $W_p$) is \emph{not} quasi-isometric to the hyperbolic space $\mathbb{H}^d$. 
\end{remark}



\begin{remark} Recently, Davis asked the following question (see Question 2.15 of \cite{MR3445448}): \emph{
Is any abstract geometric reflection group of dimension $d$ isomorphic to a projective Coxeter group in $\mathrm{SL}^{\pm}_{d+1}(\mathbb{R})$?
}
He also conjectured that almost certainly the answer is \emph{no}. 

The counterexamples can be constructed as follows. Let $W$ be a Coxeter group with Coxeter diagram in Tables \ref{Hexamples_dim4}, \ref{examples_dim4}, \ref{examples_dim5}, \ref{examples_dim6}, \ref{examples_dim7}, \ref{examples_dim8}.
By Proposition \ref{prop:nerve_are_sphere}, $W$ is an abstract geometric reflection group of dimension $d$. We now claim that if the underlying graph of the diagram $\mathsf{G}_W$ is a \emph{tree}, then $W$ is not isomorphic to a projective Coxeter group in $\mathrm{SL}^{\pm}_{d+1}(\mathbb{R})$. Indeed, suppose by contradiction that $W$ is isomorphic to a projective Coxeter group $\Gamma_P$ generated by reflections $(\sigma_s = \mathrm{Id} - \alpha_s \otimes b_s)_{s \in S}$ of $\mathrm{SL}^{\pm}_{d+1}(\mathbb{R})$. Then the Cartan matrix $A = ( \alpha_s (b_t) )_{s,t \in S}$ of $P$ is of rank $\leqslant d+1$. Since $\mathsf{G}_W$ has no edge of label $\infty$ and its underlying graph is a tree, by Proposition 20 of Vinberg \cite{bible}, we may assume that the Cartan matrix $A$ is symmetric, i.e. $A = \mathsf{C}_W$. However, it is impossible since by Propositions \ref{prop:signature} and \ref{prop:signatureH}, the matrix $\mathsf{C}_W$ is of rank $d+2$. We finally note that Tables \ref{Hexamples_dim4}, \ref{examples_dim4}, \ref{examples_dim6}, \ref{examples_dim8} do contain Coxeter diagrams whose underlying graphs are trees.

\end{remark}

\clearpage

\newcommand{\echelle}{0.45}

\appendix

\section{The tables of the Coxeter groups}

The labels $\mathcal{E}_i$, $\mathcal{Q}_j$, $\mathcal{T}_k$ provide the information of relationships between Coxeter diagrams in Tables \ref{examples_Esselmann}, \ref{Hexamples_dim4}, \ref{examples_dim4}, in Tables \ref{table:Tumarkin_dim4}, \ref{table:barbot2_not-poincare_hyp_dim4}, \ref{table:barbot2_Quasi_Fuchsian_dim4} and in Tables \ref{table:Tumarkin_dim6}, \ref{table:barbot2_not-poincare_hyp_dim6}, \ref{table:barbot2_Quasi_Fuchsian_dim6}, \ie the diagrams in the item with same label are the same except for different ranges of $p$ and $q$.

\subsection{Esselmann’s examples}
Each Coxeter group in Table \ref{examples_Esselmann} admits a Fuchsian representation.


\begin{table}[ht!]
\begin{center}

\caption{The Lannér Coxeter diagrams}
\label{table:Lanner}
\end{table}

\newpage

\bibliographystyle{alpha}

\newcommand{\etalchar}[1]{$^{#1}$}

\end{document}